\DeclareMathAlphabet{\mathcal}{OMS}{cmsy}{m}{n}
\numberwithin{equation}{section}
\newcommand{\X}{\mathcal{X}}
\newcommand{\Y}{\mathcal{Y}}
\newcommand{\Z}{\mathcal{Z}}
\newcommand{\U}{\mathcal{U}}
\renewcommand{\P}{\mathcal{P}}
\newcommand{\C}{\mathcal{C}}
\newcommand{\F}{\mathcal{F}}
\newcommand{\B}{\mathcal{B}}
\newcommand{\G}{\mathcal{G}}
\newcommand{\XY}{\X\times\Y}
\newcommand{\XC}{\mathcal{X}}
\newcommand{\YC}{\mathcal{Y}}
\newcommand{\PC}{\mathcal{P}}
\newcommand{\CC}{\mathcal{C}}
\newcommand{\DC}{\mathcal{D}}
\newcommand{\FC}{\mathcal{F}}
\newcommand{\RR}{\mathbb{R}}
\newcommand{\NN}{\mathbb{N}}
\newcommand{\EE}{\mathbb{E}}
\newcommand{\Lip}{\mathrm{Lip}}
\newcommand{\Exp}{\mathbb{E}}
\newcommand{\Var}{\mathrm{Var}}
\newcommand{\Unif}{\mathrm{Unif}}
\newcommand{\Bin}{\mathrm{Bin}}
\newcommand{\Mult}{\mathrm{Mult}}
\newcommand{\diam}{\mathrm{diam}}
\newcommand{\mint}[1]{{#1\hspace{0.05em}}}
\newcommand{\cX}{c_{\hspace{-0.1em}\X}}
\newcommand{\cY}{c_{\Y}}
\newcommand{\cXY}{\cX\oplus\cY}
\newcommand{\dif}{\,\mathrm{d}}
\newcommand{\supp}{\mathrm{supp}}
\newcommand{\range}{\mathrm{range}}
\newcommand{\norm}[1]{\left\|#1\right\|}
\newcommand{\T}{T}
\newcommand{\BC}{\mathrm{BC}} 
\newcommand{\DBC}{\mathrm{DBC}} 
\newcommand{\DIM}{\mathrm{DIM}} 
\newcommand{\LIP}{\mathrm{L}} 
\newcommand{\SMOOTH}{\mathrm{S}} 
\newcommand{\covering}{\mathcal{N}}
\newcommand{\RRplus}{\RR_{+}}
\newcommand{\frar}[2]{{#1/#2}}
\newcommand{\frab}[2]{{(#1)/#2}}
\begin{document}

\title{Empirical Optimal Transport: Rates of Convergence in Unbounded Settings}
\title{Convergence of Empirical Optimal Transport\\ in Unbounded Settings}
\author{Thomas Staudt and Shayan Hundrieser}
\date{\today}

\maketitle

\begin{abstract}
   \noindent
   In compact settings, the convergence rate of the empirical optimal transport cost to its population value is well understood for a wide class of spaces and cost functions.
   In unbounded settings, however, hitherto available results require strong assumptions on the ground costs and the concentration of the involved measures.
   In this work, we pursue a decomposition-based approach to generalize the convergence rates found in compact spaces to unbounded settings under generic moment assumptions that are sharp up to an arbitrarily small $\epsilon > 0$.
   Hallmark properties of empirical optimal transport on compact spaces, like the recently established adaptation to lower complexity, are shown to carry over to the unbounded case.
\end{abstract}

{\small
\noindent \textit{Keywords}: Wasserstein distance, convergence rate, curse of
dimensionality, metric entropy

\noindent \textit{MSC 2020 subject classification}: primary 49Q22, 62G20; secondary 60B10, 62R07, 62F35
}

\section{Introduction}
\label{sec:introduction}

Optimal transportation refers to the problem of transforming a given probability distribution into another one while minimizing the average costs of movement.
The \emph{optimal transport cost} between two probability measures $\mu\in\P(\X)$ and $\nu\in\P(\Y)$ on Polish spaces $\X$ and $\Y$ is defined~as
\begin{equation}\label{eq:ot}
  \T_c(\mu, \nu)
  =
  \inf_{\pi\in\CC(\mu, \nu)} \mint{\pi}c
  =
  \inf_{\pi\in\CC(\mu, \nu)} \int c(x, y)\dif\pi(x, y).
\end{equation}
The cost function $c\colon\XY \to\RRplus = [0,\infty)$ expresses the effort necessary to move one unit of mass from $x\in\X$ to $y\in\Y$.
The set $\CC(\mu, \nu)$ contains all measures $\pi\in\P(\XY)$ with marginals $\mu$ and $\nu$, meaning that $\pi(A\times\Y) = \mu(A)$ and $\pi(\X\times B) = \nu(B)$ for all Borel sets $A\subset\X$ and $B\subset\Y$.
Elements of this set are called transport plans.
Minimizers $\pi$ of \eqref{eq:ot} are named \emph{optimal transport plans} and exist under mild conditions.

Early work on optimal transportation dates back to \textcite{monge}, before \textcite{kantorovich} established its modern formulation.
In the past few decades, insights into optimal transport theory have been advanced substantially, and a rich mathematical landscape has emerged;
see \textcite{villani2008optimal}, \textcite{santambrogio2015optimal}, \textcite{peyre2019computational}, \textcite{panaretos2020invitation}, and \textcite{ambrosio2021lectures} for monographs that engage with the topic from analytical, geometrical, statistical, and computational perspectives.
At the same time, optimal transport based methodologies have found applications in a wide variety of fields, ranging from biology \parencite{Schiebinger19,tameling2021colocalization} and economics \parencite{galichon2016optimal} to computer vision and machine learning \parencite{gulrajani2017improved, kolkin2019style}.
Despite the staggering amount of research, however, several obstacles still affect its practical utility.
Important concerns are the efficient computation, approximation, and estimation of optimal transport related quantities on large datasets and in high dimensions.

The key objective of this manuscript is to present a generic decomposition approach for the analysis of empirically estimated optimal transport in unbounded settings.
For $n,m\in\NN$, let $(X_i)_{i=1}^n \sim \mu^{\otimes n}$ and $(Y_j)_{j=1}^m\sim \nu^{\otimes m}$ be i.i.d.\ (independent and identically distributed) samples that are also independent of one another.
The corresponding empirical measures are defined~by
\begin{equation*}
  \hat\mu_n \coloneqq  \frac{1}{n}\sum_{i=1}^n \delta_{X_i}
  \qquad\text{and}\qquad
  \hat\nu_m \coloneqq  \frac{1}{m}\sum_{j=1}^m \delta_{Y_j},
\end{equation*}
where $\delta_z$ denotes the point measure at location $z$.
We are mainly interested in the convergence of the \emph{empirical optimal transport cost} $\T_c(\hat\mu_n, \hat\nu_m)$ to its population counterpart $\T_c(\mu, \nu)$ as $n$ and $m$ tend to infinity.

The convergence of $\T_c(\hat\mu_n, \hat\nu_m)$ towards $\T_c(\mu, \nu)$ has been a prime subject of interest in a number of recent publications (see \Cref{sec:related} for a detailed overview).
The picture is most complete in compact settings, where a fairly generic theory covers arbitrary measures $\mu$ and $\nu$ and a wide range of costs and spaces \parencite{boissard2014mean,weed2019sharp,hundrieser2022empirical}.
In unbounded settings, on the contrary, available statements are more scattered and either require specific costs, only apply when $\mu = \nu$ \parencite{dereich2013constructive,fournier2015rate}, or expect strong concentration properties from the involved measures $\mu$ and $\nu$ \parencite{manole2021sharp}.
One main reason for this discrepancy is that the function classes appearing in the dual formulation of the optimal transport problem, which is the principal tool to bound $\T_c(\hat\mu_n, \hat\nu_m) - \T_c(\mu, \nu)$ via empirical process theory, are much harder to control in unbounded scenarios.
Achieving sharp convergence rates for unbounded domains, however, is crucial for a better understanding of the stability of optimal transport costs under plug-in estimators.

This work closes the gap between the results available in compact and non-compact settings.
The underlying idea is to decompose an optimal transport plan between $\mu$ and $\nu$ into a convex combination of sub-plans with compactly supported marginals, reducing the analysis of $\T_c(\hat\mu_n, \hat\nu_n) - \T_c(\mu, \nu)$ to compact sub-problems.
The error that results from miss-alignments of the empirical data, which does not follow the decomposition perfectly, is controlled via a composition bound (\Cref{lem:composition-bound}).
We pursue this strategy for generic lower semi-continuous cost functions that are marginally bounded in the sense
\begin{equation}\label{eq:marginal-cost-bound}
  c(x, y)
  \le
  \cX(x) + \cY(y)
  \qquad
  \text{for all $(x,y)\in\XY$},
\end{equation}
where $\cX\colon \X\to\RRplus$ and $\cY\colon \Y\to\RRplus$ are lower semi-continuous functions.
We also abbreviate this condition by writing $c \le \cX\oplus\cY$.
To formulate our result, we define the $\cX$- and $\cY$-balls $B_\X(r) = \cX^{-1}[0, r]$ and $B_\Y(r) = \cY^{-1}[0, r]$ with radius $r \ge 0$ and place the following assumption on the convergence of the empirical optimal transport cost on bounded set.
We denote the minimum (or maximum) of two real numbers $a,b\in\RR$ by $a \wedge b$ (or $a \vee b$).

\begin{assumption*}{$\BC(\kappa, \alpha)$}{}
  Let $c$ be of form \eqref{eq:marginal-cost-bound} and let there be $\kappa
  > 0$ and $\alpha\in(0, 1/2]$ such that the \emph{bounded convergence
  assumption}
  \begin{equation}\label{eq:bounded-convergence}
    \Exp\,\big|T_c(\hat\mu_{n}, \hat\nu_{m}) - T_c(\mu, \nu)\big|
    \le
    \kappa\,r\,(n\wedge m)^{-\alpha}
  \end{equation}
  holds for all $r \ge 1$, $\mu\in\P\big(B_\X(r)\big)$, $\nu\in\P\big(B_\Y(r)\big)$, and
  $n, m\in\NN$. 
\end{assumption*}

Note that the required linear scaling in $r$ is usually the correct one (see \Cref{sec:examples}).
For example, in Euclidean settings with costs $c(x,y) = \|x-y\|^p$ for $p>0$, suitable marginal bounds are $\cX(x) = \cY(x) = 2^p\|x\|^p$.
Then, the right hand side of \eqref{eq:bounded-convergence} grows with the $p$-th power of the Euclidean diameter of the supports of $\mu$ and $\nu$.

In \Cref{sec:proof}, we prove the following result via the described decomposition strategy.

\begin{theorem}{unbounded rates}{unbounded-rates}
  Let $\X$ and $\Y$ be Polish, $\mu\in\P(\X)$, $\nu\in\P(\Y)$, and $c\colon\XY\to\RRplus$ lower semi-continuous such that $\BC(\kappa, \alpha)$ is satisfied.
  Assume there are $1 < s \le 2$ and $\epsilon > 0$ such that the moments $\mint\mu \cX^{s+\epsilon}$ and $\mint\nu \cY^{s+\epsilon}$ are finite.
  Then, for all $n,m\in\NN$,
  \begin{equation}\label{eq:unbounded-rates}
    \Exp\,\big|T_c(\hat\mu_{n}, \hat\nu_{m}) - T_c(\mu, \nu)\big|
    \lesssim
    (n\wedge m)^{-\alpha} + (n\wedge m)^{-\frab{s-1}{s}}.
  \end{equation}
\end{theorem}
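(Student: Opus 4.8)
The plan is to decompose an optimal plan $\pi\in\CC(\mu,\nu)$ according to the sublevel sets of $\cX$ and $\cY$, so that the tail contributions are controlled by the moment assumptions while the bulk is handled by $\BC(\kappa,\alpha)$. Concretely, I would fix a truncation radius $r\ge 1$ (to be optimized at the end) and split $\mu = \mu|_{B_\X(r)} + \mu|_{B_\X(r)^c}$, similarly for $\nu$. Writing $p = \mu(B_\X(r)^c)$ and $q = \nu(B_\Y(r)^c)$, Markov's inequality applied to $\mint\mu\cX^{s+\epsilon}$ and $\mint\nu\cY^{s+\epsilon}$ gives $p,q \lesssim r^{-(s+\epsilon)}$. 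The idea is then to build a coupling $\pi$ of $\mu$ and $\nu$ which on the "inner" part (mass roughly $1-p-q$) has both marginals supported in balls of radius $\sim r$, paying the $\BC$ rate $\kappa r (n\wedge m)^{-\alpha}$ there, and on the "outer" part has small mass whose transport cost is bounded directly by $\cX\oplus\cY$ and hence by the moments.

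The key technical device is \Cref{lem:composition-bound}: after splitting $\pi = \sum_k \lambda_k \pi_k$ into sub-plans with compactly supported marginals $\mu_k,\nu_k$, one controls $\Exp|T_c(\hat\mu_n,\hat\nu_m) - T_c(\mu,\nu)|$ by (i) the weighted sum $\sum_k \lambda_k \Exp|T_c((\hat\mu_k)_n, (\hat\nu_k)_m) - T_c(\mu_k,\nu_k)|$, each term bounded via $\BC$ by $\kappa r (n\wedge m)^{-\alpha}$ up to constants, plus (ii) a \emph{misalignment} term accounting for the fact that the empirical samples $\hat\mu_n,\hat\nu_m$ do not split into the sub-populations in exactly the proportions $\lambda_k$. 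The misalignment term is of stochastic size governed by the fluctuations of multinomial counts, i.e.\ of order $(n\wedge m)^{-1/2}$ times a factor depending on the cost scale $r$, plus a bias term of order $p\cdot(\text{outer cost scale})$. For the outer part, I would bound the transport cost of the excess mass crudely: any coupling of the outer pieces costs at most $\int_{B_\X(r)^c}\cX\dif\mu + \int_{B_\Y(r)^c}\cY\dif\nu$, which by Hölder against the $(s+\epsilon)$-moments is $\lesssim r^{-(s-1)-\epsilon'}$ for a suitable $\epsilon' > 0$.

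Collecting the pieces, the bound takes the schematic form
\begin{equation*}
  \Exp\,\big|T_c(\hat\mu_n,\hat\nu_m) - T_c(\mu,\nu)\big|
  \;\lesssim\;
  r\,(n\wedge m)^{-\alpha}
  \;+\;
  r\,(n\wedge m)^{-1/2}
  \;+\;
  r^{-(s-1)},
\end{equation*}
where the middle term is dominated by the first since $\alpha\le 1/2$, and the $\epsilon$ in the moment hypothesis provides the slack needed to absorb logarithmic or constant losses incurred when passing from $s+\epsilon$ moments to an $r^{-(s-1)}$ tail bound (this is precisely why the statement is sharp only up to $\epsilon$). Optimizing over $r\ge 1$: balancing $r\,(n\wedge m)^{-\alpha}$ against $r^{-(s-1)}$ yields $r \sim (n\wedge m)^{\alpha/s}$ and a rate $(n\wedge m)^{-\alpha(s-1)/s}$; but one must also check the regime where the free (i.e.\ $\kappa$-independent, $\alpha$-independent) term $(n\wedge m)^{-(s-1)/s}$ dominates, which happens when the $\BC$ rate is fast. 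Taking the maximum of the two regimes gives exactly $(n\wedge m)^{-\alpha} + (n\wedge m)^{-(s-1)/s}$ up to constants, as claimed.

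The main obstacle I anticipate is the careful bookkeeping of the misalignment term in the composition step: one has to couple the empirical measures to an idealized scenario in which the samples split perfectly along the decomposition, and bound the resulting transport cost discrepancy uniformly in $r$ without picking up extra powers of $r$ beyond the single factor already present in $\BC$. Making this quantitative — showing the misalignment contributes only $O(r\,(n\wedge m)^{-1/2})$ and a bias $O(r^{-(s-1)})$ rather than something worse — is where the $\cX\oplus\cY$ structure of the cost and \Cref{lem:composition-bound} must be used most delicately, and it is the crux that separates this argument from a naive truncation.
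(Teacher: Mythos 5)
Your plan has the right ingredients (decompose along $\cX\oplus\cY$-sublevel sets, apply the composition bound, invoke $\BC$ on the bounded pieces, absorb the tail via moments), but the single-scale truncation you propose cannot deliver the claimed rate, and the final balancing step is where the argument breaks.

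Concretely, your schematic bound is
\begin{equation*}
  \Exp\,\big|T_c(\hat\mu_n,\hat\nu_m) - T_c(\mu,\nu)\big|
  \;\lesssim\;
  r\,(n\wedge m)^{-\alpha}
  \;+\;
  r\,(n\wedge m)^{-1/2}
  \;+\;
  r^{-(s+\epsilon-1)},
\end{equation*}
and you propose to optimize over $r\geq 1$. Balancing $r\,(n\wedge m)^{-\alpha}$ against $r^{-(s+\epsilon-1)}$ forces $r \asymp (n\wedge m)^{\alpha/(s+\epsilon)}$ and yields the exponent $\alpha(s+\epsilon-1)/(s+\epsilon)$, which is \emph{strictly smaller} than $\alpha$ for any finite $\epsilon$. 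For instance, with $\alpha = 1/2$, $s=2$, and small $\epsilon$ you obtain roughly $(n\wedge m)^{-1/4}$ rather than $(n\wedge m)^{-1/2}$. The sentence ``taking the maximum of the two regimes gives exactly $(n\wedge m)^{-\alpha} + (n\wedge m)^{-(s-1)/s}$'' is therefore unfounded: there is no regime of $r$ in which the first term is $\lesssim(n\wedge m)^{-\alpha}$ while the third term also vanishes. The $r$-prefactor in front of the $\BC$ rate and the $r^{-(s+\epsilon-1)}$ tail bias pull in opposite directions, and a single cut cannot decouple them.

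The paper's proof avoids exactly this trap by using an \emph{infinite dyadic} decomposition, $A_l = \{(x,y)\,:\,2^l \leq \cX(x)+\cY(y) < 2^{l+1}\}$ for $l\in\NN$, rather than one truncation. Each layer has cost scale $c_l = 2^{l+1}$ and mass $a_l \lesssim 2^{-l(s+\epsilon)}$ (by Markov applied to the moment), so the $\BC$ contributions sum as $\sum_l a_l^{1-\alpha} c_l\,(n\wedge m)^{-\alpha}$; the exponential decay of $a_l$ beats the exponential growth of $c_l$ thanks to the $\epsilon$-slack, and the series converges to a finite constant. No $r$-dependent factor ever multiplies $(n\wedge m)^{-\alpha}$. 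This is the step that a two-piece split cannot replicate: in your setup, the single inner block has mass $\approx 1$ and uniform cost bound $\approx r$, so the prefactor $r$ stays glued to the $\BC$ rate. (A secondary but also real issue: the outer layer $B_\X(r)^c$ is unbounded, so the composition bound's $c_l$ is infinite there; the paper's layers are all bounded, which is needed for the multinomial misalignment term.)

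You also omit the lower-bound direction. The composition bound only controls $T_c(\hat\mu_n,\hat\nu_m) - T_c(\mu,\nu)$ from \emph{above}. The paper handles the other direction via duality: $L^s$-integrable dual potentials exist under the given moment assumptions (\Cref{lem:lp-potentials}), and the sample-mean fluctuation of $\mint{\hat\mu_n} f + \mint{\hat\nu_m} g$ gives the $(n\wedge m)^{-(s-1)/s}$ term by \Cref{lem:sample-mean}. This is an independent ingredient that a primal-only argument does not supply.
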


This result identifies two distinct statistical regimes.
The first one ($\alpha \le \frab{s-1}{s}$) occurs if sufficiently many moments exist.
Then, optimal transport between bounded sets is statistically the dominating operation, and the rates for bounded settings determine the unbounded ones.
The second regime ($ \alpha > \frab{s - 1}{s}$) occurs if transport between (or into) the tail of the distributions dominates.
In this case, the exponent $\frab{s - 1}{s}$ for $1 < s \le 2$ corresponds to the expected rate for the empirical mean of a random variable that has moments up to order $s$ (see \Cref{subsec:lowerbound:tail}).
The following comments provide additional context regarding \Cref{thm:unbounded-rates}.

\begin{enumerate}[leftmargin=2em, label={\arabic*)}]

  \item \label{it:log}
    For simplicity of exposition, Assumption $\BC(\kappa, \alpha)$ does not cover rates with additional logarithmic factors, like $n^{-1/2}\log(n+1)$.
    These rates commonly occur in upper bounds for compact settings, e.g., in dimension $2$ for Lipschitz costs or dimension $4$ for smooth costs (see \Cref{sec:examples}).
    A look at the proof reveals that these $\log$ factors can be included into \Cref{thm:unbounded-rates} and affect the resulting rate as expected (see \Cref{subsec:composition}).

  \item \label{it:one-sample} Rates for the one-sample quantity $\T_c(\hat\mu_n, \nu)$ can be derived from \eqref{eq:unbounded-rates} via the inequality
    \begin{equation*}
      \Exp\,\big|\T_c(\hat\mu_{n}, \nu) - \T_c(\mu, \nu)\big|
      \le
      \Exp\,\big|\T_c(\hat\mu_{n}, \hat\nu_{m}) - \T_c(\mu, \nu)\big|,
    \end{equation*}
    which holds for any $m\in\NN$ (see \Cref{subsec:composition}).
    In some configurations, like specific semi-discrete settings, where we sample from a finitely supported measure $\mu$, the moment requirement $\mint\nu\cY^{s+\epsilon} < \infty$ can be relaxed in this case (\cite{del2022central}).
    However, in general, this condition on $\nu$ might actually be necessary even when only sampling from $\mu$ (see \Cref{sec:discussion} for a discussion).
    Analogous results hold for $T_c(\mu, \hat\nu_m)$.

  \item \label{it:mu-equals-nu}
    If $\mu = \nu$ and the costs satisfy $c(x,x) = 0$ and $\cX = \cY$, the plan $\pi$ induced by the identity function is an optimal transport plan, and the restricted marginals that are constructed in the proof of \Cref{thm:unbounded-rates} will coincide.
    In this case, Assumption~$\BC(\kappa, \alpha)$ only has to be satisfied for equal measures $\mu = \nu$ as well.
    This can be of interest in settings where improved convergence rates for, say, $\T_c(\hat\mu_n, \mu)$, are known in the bounded setting.

  \item \label{it:known-pi}
    Generalizing the previous point, it suffices to show Assumption~$\BC(\kappa, \alpha)$ for measures $\mu\in\P\big(B_\X(r)\big)$ and $\nu\in\P\big(B_\Y(r)\big)$ that are the marginals of conditional plans $\pi|_A / \pi(A)$, where $\pi$ is optimal and $A\subset B_\X(r)\times B_\Y(r)$ has non-zero $\pi$-mass.
  
  \item \label{it:super-fast}
    \Cref{thm:unbounded-rates} does not allow for convergence rates $n^{-\alpha}$ with $\alpha > 1/2$ even if super-fast convergence was possible in Assumption~$\BC(\kappa, \alpha)$ (see equation \eqref{eq:cts_connected_bounds} for more context on super-fast convergence of empirical optimal transport).
    This is due to the fact that error terms inherent to our decomposition argument converge with $n^{-1/2}$ at best.

  \item \label{it:marginal-costs}
    There is a balancing issue that determines which marginal cost functions $\cX$ and $\cY$ are appropriate for application in \Cref{thm:unbounded-rates}.
    As a general rule: the larger the marginal costs are, the easier Assumption $\BC(\kappa, \alpha)$ is to satisfy, while the moment conditions become more restrictive.
  
  \item \label{it:independence}
    The independence assumption between the samples $(X_i)_{i=1}^n$ and $(Y_j)_{j=1}^m$ is crucial for the pursued proof strategy.
    Results in compact settings, in contrast, typically allow for arbitrary inter-sample dependencies.
    We consider this constraint in \Cref{thm:unbounded-rates} to be a likely artifact of our approach. In fact, an alternative proof strategy that we explore in \Cref{app:dual_decomposition}, but which is less general and requires Lipschitz cost functions, is free of this requirement.
    
  \item \label{it:uniform}
    The constant in equation \eqref{eq:unbounded-rates} can be expressed in terms of $\kappa$, $\epsilon$, $s$, $\alpha$, as well as the moments $\mint\mu \cX^{s+\epsilon}$ and $\mint\nu \cY^{s+\epsilon}$ (see \Cref{subsec:composition}).
    This means that inequality \eqref{eq:unbounded-rates} actually holds \emph{uniformly} over all measures whose $(s+\epsilon)$-th moments are bounded by a common constant.
    
\end{enumerate}

Since convergence rates in bounded settings are readily available in the literature, \Cref{thm:unbounded-rates} can easily be applied to derive novel results for a variety of problems.
For example, under costs $c(x, y) = \|x - y\|^p$ based on the Euclidean norm in $\RR^d$ for $p \ge 2$ and $d > 4$, we establish (\Cref{cor:wasserstein-rates})
\begin{subequations}%
\begin{equation*}
  \Exp\,\big|\T_c(\hat\mu_n, \hat\nu_n) - \T_c(\mu, \nu)\big|
  \lesssim
  n^{-\frar{2}{d}}
\end{equation*}
whenever $\mu\in\P(\RR^d)$ and $\nu\in\P(\RR^d)$ possess finite $\big(\frac{dp}{d-2}+\epsilon\big)$-th Euclidean moments.
This insight is novel in the sense that comparable results rely on much stronger concentration assumptions \parencite{manole2021sharp} or consider $\mu = \nu$ \parencite{fournier2015rate}.
If $0 < p < 2$ and $d > 2p$, on the other hand, the same corollary shows
\begin{equation*}
  \Exp\,\big|\T_c(\hat\mu_n, \hat\nu_n) - \T_c(\mu, \nu)\big|
  \lesssim
  n^{-\frar{p}{d}}
\end{equation*}
\label{eq:euclidean-rates}%
\end{subequations}%
under the assumption of finite $\big(\frac{dp}{d-p}+\epsilon\big)$-th moments, where similar comments apply.

The remainder of the article is structured as follows. In \Cref{sec:related}, we provide a thorough review of related literature.
In \Cref{sec:examples}, we identify a number of settings suited for application of \Cref{thm:unbounded-rates}.
Besides the Euclidean example mentioned above (\Cref{cor:wasserstein-rates}), we provide similar extensions to unbounded settings for more general spaces (\Cref{cor:wassersteinPolish}) and cost functions (Corollaries \ref{cor:lipschitz} and \ref{cor:smooth}).
In \Cref{sec:lower}, we discuss lower bounds and conclude that \Cref{thm:unbounded-rates} cannot generally be improved by much.
In particular, the required moment conditions are sharp up to the arbitrarily small term $\epsilon > 0$. \Cref{sec:proof} contains the proof of \Cref{thm:unbounded-rates} and provides justifications for the claims \ref{it:log} to \ref{it:uniform} formulated above. Finally, in \Cref{sec:discussion}, we discuss potential venues for future work, focussing on refinements of one-sample results as well as two-sample results under dependent samples. 

\section{Related work}
\label{sec:related}

\begin{table}[t]
  \centering
  \begin{tabular}{c@{\qquad}cccccccc}
    \toprule
    Contents $\backslash$ Article &  [BG14] & [FG15] & [WB19] & [MNW24] & [HSM24] &  This work\\[0.25em]
    \toprule
    {\small Sharp rates for $\mu = \nu$} &  \checkmark & \checkmark  & \checkmark &&& \checkmark \\
    {\small Sharp rates for $\mu \neq \nu$} & &  & & \checkmark & \checkmark& \checkmark\\
  \midrule
    {\small Wasserstein $p$-costs} & \checkmark & \checkmark & \checkmark & \checkmark & \checkmark& \checkmark\\
    {\small More general costs} & & & & \checkmark& \checkmark & \checkmark\\
  \midrule
    {\small Euclidean space} & \checkmark& \checkmark& \checkmark& \checkmark& \checkmark& \checkmark\\
    {\small More general spaces} & \checkmark && \checkmark&& \checkmark& \checkmark \\
  \midrule
    {\small Unbounded costs} & \checkmark& \checkmark&& \checkmark&& \checkmark \\
  \midrule
    {\small\begin{tabular}{@{}c@{}} Rates in terms of  \\  intrinsic dimension \end{tabular}}  & \checkmark & & \checkmark & & \checkmark & \checkmark \\
    \bottomrule
  \end{tabular}

  \caption{Overview of contents and limitations of selected contributions related to our work.
  \label{tab:Summary}}
\end{table}

The convergence of the empirical optimal transport cost to its population value has been the subject of extensive and long-standing research.
Below, we provide an overview of relevant contributions.
Given the vast volume of literature in this area, we mainly focus on work that establishes convergence rates comparable to the ones that we derive.
\Cref{tab:Summary} summarizes some recent contributions along with a comparison to our own work.

The most extensive line of related research has been dedicated to the question of how well an empirical measure $\hat \mu_n$ approximates the underlying population measure $\mu$ with respect to the \emph{Wasserstein distance}. The $p$-Wasserstein distance between probability measures $\mu, \nu\in \PC(\XC)$ on a Polish metric space $(\XC, d)$ for $p > 0$ is defined via $T_{d^p}(\mu, \nu)^{1/(p\vee 1)}$. It establishes a metric on the set of probability measures with finite $p$-th moments \parencite[Chapter 6]{villani2008optimal}, which accounts for its particular significance in geometric, computational, and statistical contexts.
\textcite{dudley1969speed} was the first to make a foundational contribution to the analysis of the $1$-Wasserstein distance $T_{d}(\hat \mu_n, \mu)$ on a general Polish metric space.
His result was extended to arbitrary $p\geq 1$ by \textcite{boissard2014mean} for Polish metric spaces with finite Minkowski dimension under high-order moment assumptions, and refined by \textcite{weed2019sharp} for compact metric spaces with a measure-dependent notion of dimension (the upper Wasserstein dimension). 
A comprehensive analysis for the Wasserstein distance on Euclidean spaces $\RR^d$ with Euclidean metric $\norm{x-y}$ was accomplished by \textcite{fournier2015rate}, who built  on coupling techniques by \textcite{dereich2013constructive}.
For $\mu \in \PC(\RR^d)$ with finite moment of order $q > p >0$, such that $q \neq 2p$ if $d\leq 2p$ and $q \neq dp/(d-p)$ if $d > 2p$, they prove for all $n\in \NN$ that\footnote{Note that Theorem 1 of \textcite{fournier2015rate} erroneously demands $q \neq d/(d-p)$ under $d > 2p$, but the assertion is actually shown for $q\neq dp/(d-p)$.}
\begin{equation}\label{eq:fournierBound}
  \EE\big[T_{\norm{\cdot}^p}(\hat \mu_n,\mu)\big] \lesssim \varphi_{p,d}(n)  + n^{\frab{q-p}{q}} \quad \text{ with } \quad  \varphi_{p,d}(n) \coloneqq \begin{cases}
  n^{-\frar{1}{2}} &  \text{ if } d < 2p,\\
  n^{-\frar{1}{2}}\log(n+1) &   \text{ if } d = 2p,\\
  n^{-\frar{p}{d}} &  \text{ if } d > 2p.
  \end{cases}
\end{equation}
The hidden constant only depends on $p,d,q$, and the $q$-th moment of $\mu$.
This upper bound can be shown to be nearly sharp in $d$ and $q$ for multiple examples (see \Cref{sec:lower} for the regime $d \neq 2p$).
It is even minimax rate optimal among all possible measure estimators based on i.i.d.\ random variables \parencite{singh2018minimax}. 
For $2 = d = 2p$ and $\mu = \Unif[0, 1]^2$, the celebrated Ajtai-Koml\'os-Tusn\'ady-matching theorem \parencite{ajtai1984optimal} states that $T_{\norm{\cdot}}(\hat \mu_n, \mu)\asymp n^{\frar{1}{2}}\log(n+1)^{\frar{1}{2}}$ with high probability (see also \cite{bobkov2021simple, divol2021short}).
This suggests that \eqref{eq:fournierBound} is not sharp for $d = 2p$ up to $\log(n+1)^{\frar{1}{2}}$, but it remains an open question whether the log-factor in \eqref{eq:fournierBound} can be improved in the general setting.
Moreover, refinements in terms of $q$ to treat the remaining cases of \eqref{eq:fournierBound} were derived by \textcite{dedecker2019behavior} and \textcite{lei2020convergence} and lead to additional logarithmic terms in the upper bound.
Notably, \textcite{lei2020convergence} also shows that the constants do not depend on the ambient dimension and derives convergence rates for measures on Banach spaces.
An explicit analysis of non-asymptotic constants for Euclidean settings was carried out by \textcite{kloeckner2020empirical, fournier2022convergence}. Moreover, parallel to our work, \textcite{larsson2023concentration} derived upper bounds as in \eqref{eq:fournierBound} for a class of homogenous Euclidean costs $c(x,y) = h(\norm{x-y})$ with $h(t) \asymp t^p$ for $t>0$ near zero and possibly super-polynomial growth for $t\rightarrow \infty$ under suitable integrability assumptions on the underlying measures. 

Improvements to \eqref{eq:fournierBound} can be made for $d\leq 2p$ with $p\geq 1$ if $\mu$ is absolutely continuous with connected support.
For instance, for $\mu = \Unif[0,1]^d$ it is known by \textcite{bobkov2019one} if $d= 1$, \textcite{ajtai1984optimal, talagrand1994matching} if $d = 2$, and \textcite{talagrand1994transportation} if $d \geq 3$, that for all $n\in \NN$,
\begin{equation}\label{eq:cts_connected_bounds}
	\EE\big[T_{\norm{\cdot}^p}(\hat \mu_n, \mu)\big] \asymp 
	\begin{cases}
 n^{-\frar{p}{2}}\log(n+1)^{\frar{p}{2}} & \text{ if } d = 2,\\
 n^{-\frar{p}{(d\vee 2)}} & \text{ if } d \neq 2. 	
 \end{cases}
\end{equation}
Remarkably, this result implies that faster convergence than the parametric $n^{-1/2}$ rate is possible on connected domains.
The asymptotic constant of \eqref{eq:cts_connected_bounds} as $n\rightarrow\infty$ was derived by \textcite{ambrosio2022quadratic} for $p = d = 2$ and by \textcite{goldman2021convergence} for $p\geq 1, d\geq 3$.
Similar assertions, up to additional logarithmic terms, remain valid if $\mu$ is replaced by a (multivariate) standard Gaussian distribution \parencite{ledoux2018optimal, ledoux2019optimal, ledoux2021optimal} or for $p = 2$ if $\mu$ admits a positively lower and upper bounded density on $[0,1]^d$ \parencite{manole2021plugin}. 
For the regime $d> 2p\geq 2$, no improvements to \eqref{eq:fournierBound} are possible if $\mu$ admits a density and has sufficiently many finite moments.
Indeed, in this setting, it was shown by \textcite{dobric1995asymptotics} for $p = 1$ and later extended by \textcite{barthe2013combinatorial} for $p \geq 1$ that almost surely
\begin{equation*}
  0
  <
  \liminf_{n\rightarrow \infty}n^{\frar{p}{d}}\,\T_{\norm{\cdot}^p}(\hat \mu_n, \mu)
  \leq
  \limsup_{n\rightarrow \infty}n^{\frar{p}{d}}\,\T_{\norm{\cdot}^p}(\hat \mu_n, \mu)
  <
  \infty. 
\end{equation*}

The literature cited so far had its focus on the convergence rate of the $p$-Wasserstein distance between an empirical measure and its population counterpart.
From a statistical perspective, however, convergence rates with respect to different population measures are of importance.
Invoking the triangle inequality for $\smash{\T_{d^p}^\frar{1}{p}}$ with $p \geq 1$, it follows from Jensen's inequality that
\begin{equation}\label{eq:naive_triangle_bound}
  \EE\,\big|\T_{d^p}(\hat \mu_n, \nu)^\frar{1}{p} - \T_{d^p}(\mu, \nu)^\frar{1}{p}\big|
  \leq
  \EE\big[\T_{d^p}(\hat \mu_n, \mu)^\frar{1}{p}\big]
  \leq
  \EE\big[\T_{d^p}(\hat \mu_n, \mu)\big]^\frar{1}{p}, 
\end{equation}
which asserts convergence rates at least as fast as if $\mu = \nu$.
In the Euclidean setting with sufficiently many moments, i.e., when the term $\varphi_{p,d}(n)$ dominates in equation \eqref{eq:fournierBound}, the resulting upper bound for equation \eqref{eq:naive_triangle_bound} is even minimax rate optimal up to logarithmic terms among all possible estimators for $\T_{\norm{\cdot}^p}(\mu, \nu)^\frar{1}{p}$ \parencite{niles2022estimation}. 
However, if a separability constraint of the form $\T_{\norm{\cdot}^p}(\mu, \nu) > \delta > 0$ is imposed on $\mu$ and $\nu$, the upper bound in \eqref{eq:naive_triangle_bound} can be improved.
This behavior was first observed by \textcite{sommerfeld2018inference} in the context of distributional limits for discrete population measures and later formalized for compactly supported probability measures on $\RR^d$ and $p = 2$ by \textcite{chizat2020faster}.
They established
\begin{equation}\label{eq:chizatbound}
  \EE\,\big|\T_{\norm{\cdot}^2}(\hat \mu_n, \nu)^\frar{1}{2} - \T_{\norm{\cdot}^2}(\mu, \nu)^\frar{1}{2}\big|
  \lesssim
  \varphi_{2,d}(n)
\end{equation}
where the implicit constant depends on the diameter of the support of $\mu$ and $\nu$, $\delta > 0$, and $d\in\NN$.
Compared to \eqref{eq:naive_triangle_bound}, this yields squared rates.
Bound \eqref{eq:chizatbound} was extended by \textcite{Deb2021} to the setting where one measure is sub-Weibull at the expense of additional logarithmic terms.

For more general ground cost functions, that are not necessarily based on a metric, the literature on convergence rates is far less copious.
For a class of costs of the form $c(x,y) = h(x-y)$ on Euclidean spaces $\RR^d$ with $d \geq 5$, \textcite{manole2021sharp} derived the upper bounds
\begin{equation}\label{eq:manole}
  \EE\,\big|\T_c(\hat\mu_n, \hat\nu_n) - \T_c(\mu, \nu)\big|
  \lesssim
  n^{-\alpha/d},
\end{equation}
where $0 < \alpha \le 2$ denotes the Hölder-smoothness of the cost function.
The measures $\mu$ and $\nu$ are either required to be compactly supported or absolutely continuous with strong concentration (sub-Weibull) as well as certain anti-concentration properties.
The upper bound \eqref{eq:manole} is sharp if $\mu$ and $\nu$ are absolutely continuous.
For compactly supported measures, later work by \textcite{hundrieser2022empirical}  showed that the right-hand side of \eqref{eq:manole} generalizes to $\varphi_{\alpha, d}(n)$ for $d \ge 1$ and a more generic class of Hölder-smooth costs.
In particular, their results also cover some non-Euclidean settings, like metric spaces with finite Minkowski-Bouligand dimension or compact smooth manifolds.
Moreover, \textcite{hundrieser2022empirical} documented the general phenomenon of \emph{lower complexity adaptation} of empirical optimal transport, i.e., that convergence rates of $\T_c(\hat\mu_n, \hat\nu_n)$  towards the population quantity $\T_c(\mu, \nu)$ are determined by the less complex measure out of $\mu$ and $\nu$.
Instances of this phenomenon have previously been reported in semi-discrete settings. For example,
\textcite{Forrow19} showed that the convergence rates in the Euclidean $p$-Wasserstein setting \eqref{eq:chizatbound} are of parametric order $n^{-1/2}$ if $\nu$ is discrete and $\mu$ is compactly supported.
Similar observations were made by \textcite{del2022central} for the 1-Wasserstein distance on more general metric spaces.

\section{Applications of the main theorem}
\label{sec:examples}

This section elaborates on a number of examples that illustrate how \Cref{thm:unbounded-rates} can be applied in prevalent settings. 
We first concentrate on the $p$-Wasserstein distances on Euclidean and Polish spaces.
Afterwards, we treat more general costs that are either Lipschitz-continuous or smooth in one component. Here, we confine ourselves to (partially) Euclidean settings $\X\subset\RR^d$ for some $d\in\NN$, even though more general results, e.g., in geodesic spaces, can likely be derived as well.
An important consequence of our considerations is that lower complexity adaptation (see \cite{hundrieser2022empirical}) also emerges in unbounded settings.
For the sake of simplicity, all statements in this section are formulated for equal sample sizes $m = n$. The generalization to $m \neq n$ by replacing $n$ in the upper bounds with $n \wedge m$ is straightforward.

\subsection{Wasserstein distances on Euclidean spaces}
\label{subsec:WassersteinEucl}

Let $\X = \Y = \RR^d$ for $d\in\NN$ and let $\B = \{x\in\RR^d\,|\,\|x\| \le 1\}$
be the Euclidean unit ball. Consider the costs $c(x, y) = \|x - y\|^p$ for $p > 0$.
Setting $c_\X = c_\Y = 2^{p}\|\cdot\|^p$, we note $c \le \cX \oplus\cY$ and
\begin{equation*}
  B_\X(r)
  =
  B_\Y(r)
  =
  \big\{x\in\RR^d\,\big|\, \|x\| \leq 2^{-1}r^{1/p}\big\}
  =
  2^{-1}r^{1/p}\,\B
\end{equation*}
for any $r\geq 1$.
From the literature on empirical optimal transport on compacta \parencite{fournier2015rate, manole2021sharp, hundrieser2022empirical}, we know that
\begin{equation}\label{eq:upperBoundEmpiricalOT}
    \Exp\,\big| \T_{\norm{\cdot}^p}(\hat\mu_n, \hat\nu_n) - \T_{\norm{\cdot}^p}(\mu, \nu) \big|
  \lesssim
  \begin{cases}
    \varphi_{p,d}(n)  & \text{ if } \mu = \nu,\\
    \varphi_{p \wedge 2,d}(n)& \text{ if } \mu \neq \nu,
  \end{cases}
\end{equation}
for any $\mu, \nu \in \PC(\B)$ with the rate function $\varphi_{p,d}$ defined in \eqref{eq:fournierBound} and a constant depending on $p$ and $d$ only.
We next observe that any measure with support in $B_\X(r)$ can be rescaled via a pushforward along the map $u(x) = 2x/r^{1/p}$ such that it is supported in $\B$.
This affects the cost function by a scaling only.
Therefore, we can apply \eqref{eq:upperBoundEmpiricalOT} to any $\mu\in \P\big(B_\X(r)\big)$ and $\nu\in \P\big(B_\Y(r)\big)$ for $r\geq 1$ via
\begin{align}
  \Exp\,\big| \T_{\norm{\cdot}^p}(\hat\mu_n, \hat\nu_n) - \T_{\norm{\cdot}^p}(\mu, \nu) \big|
  &=
  2^{-p} r\,\Exp\,\big| \T_{\norm{\cdot}^p}(u_\#\hat\mu_n, u_\#\hat\nu_n) - \T_{\norm{\cdot}^p}(u_\#\mu, u_\#\nu) \big| \nonumber \\
  &\le
  \kappa\,r\, \begin{cases}
    \varphi_{p,d}(n) & \text{ if } \mu = \nu,\\
    \varphi_{p \wedge 2,d}(n)  & \text{ if } \mu \neq \nu,
  \end{cases} \label{eq:wasserstein-p-bc}
\end{align}
where $\kappa$ depends on $p$ and $d$ alone.
This establishes the bounded convergence assumption that is necessary for the application of \Cref{thm:unbounded-rates}.
\begin{corollary}{Euclidean Wasserstein rates}{wasserstein-rates}
  Let $\X = \Y = \RR^d$ and $c(x, y) = \|x - y\|^p$ for $p > 0$ and $d\in\NN$.
  Assume $\mu, \nu\in\P(\RR^d)$ possess finite $(q+\epsilon)$-th Euclidean moments for $p < q \le 2p$ and $\epsilon > 0$.
  Then, for any $n\in\NN$, it holds that
  \begin{equation*}
    \Exp\,\big| \T_{\norm{\cdot}^p}(\hat\mu_n, \hat\nu_n) - \T_{\norm{\cdot}^p}(\mu, \nu) \big|
    \lesssim
    \begin{cases}
      \varphi_{p,d}(n) \;\,\,\,+ n^{-\frab{q-p}{q}} & \text{ if } \mu = \nu,\\
      \varphi_{p\wedge 2,d}(n) + n^{-\frab{q-p}{q}} & \text{ if } \mu \neq \nu,
    \end{cases}
  \end{equation*}
  where the constant depends on $\epsilon$, $p$, $q$, $d$, and the $(q+\epsilon)$-th Euclidean moments of $\mu$, $\nu$. 
\end{corollary}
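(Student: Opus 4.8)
The plan is to apply \Cref{thm:unbounded-rates} with the marginal cost functions $\cX = \cY = 2^p\norm{\cdot}^p$. Assumption $\BC(\kappa,\alpha)$ for this choice has essentially already been verified in \eqref{eq:wasserstein-p-bc} by a rescaling argument: for $\mu\in\P\big(B_\X(r)\big)$ and $\nu\in\P\big(B_\Y(r)\big)$ with $r\geq 1$ one has $\Exp\,\big|\T_{\norm{\cdot}^p}(\hat\mu_n,\hat\nu_n) - \T_{\norm{\cdot}^p}(\mu,\nu)\big| \le \kappa\,r\,\varphi_{p',d}(n)$, where $p' = p$ if $\mu = \nu$ and $p' = p\wedge 2$ otherwise, with $\kappa$ depending only on $p,d$. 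In the case $\mu = \nu$ one additionally invokes \ref{it:mu-equals-nu}: since $c(x,x) = 0$ and $\cX = \cY$, it suffices that $\BC(\kappa,\alpha)$ hold for equal measures, which is precisely the first branch of \eqref{eq:wasserstein-p-bc}. To match the theorem's moment parameter to the hypothesis I would set $s = q/p$; then $p < q \le 2p$ yields $s\in(1,2]$, and a short computation gives $\frab{s-1}{s} = \frab{q-p}{q}$. Choosing $\epsilon' = \epsilon/p > 0$ one has $\cX^{s+\epsilon'} = 2^{q+\epsilon}\norm{\cdot}^{q+\epsilon} = \cY^{s+\epsilon'}$, so the moments $\mint\mu\cX^{s+\epsilon'}$ and $\mint\nu\cY^{s+\epsilon'}$ are finite by assumption.

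With these choices, \Cref{thm:unbounded-rates} (applied with $m = n$) gives $\Exp\,\big|\T_{\norm{\cdot}^p}(\hat\mu_n,\hat\nu_n) - \T_{\norm{\cdot}^p}(\mu,\nu)\big| \lesssim n^{-\alpha} + n^{-\frab{q-p}{q}}$, and it only remains to identify the admissible exponent $\alpha$ with the desired bounded rate $\varphi_{p',d}(n)$. If $d \ne 2p'$, then $\varphi_{p',d}(n)$ equals $n^{-1/2}$ (for $d < 2p'$) or $n^{-p'/d}$ (for $d > 2p'$); in either case $\BC(\kappa,\alpha)$ holds verbatim with $\alpha = 1/2$, respectively $\alpha = p'/d \in (0,1/2)$, and $n^{-\alpha} + n^{-\frab{q-p}{q}}$ is exactly $\varphi_{p',d}(n) + n^{-\frab{q-p}{q}}$. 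The borderline case $d = 2p'$, where $\varphi_{p',d}(n) = n^{-1/2}\log(n+1)$ does not fit the clean form of $\BC$, is the one point that needs extra care: there I would invoke the logarithmic refinement of \Cref{thm:unbounded-rates} announced in \ref{it:log} and carried out in \Cref{subsec:composition}, which propagates a bounded rate $r\,n^{-1/2}\log(n+1)$ to the unbounded rate $n^{-1/2}\log(n+1) + n^{-\frab{q-p}{q}}$, again of the claimed shape.

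Finally, the stated dependence of the implicit constant follows by bookkeeping: by \ref{it:uniform} the constant in \Cref{thm:unbounded-rates} depends only on $\kappa$, $\epsilon'$, $s$, $\alpha$, and the moments $\mint\mu\cX^{s+\epsilon'}$, $\mint\nu\cY^{s+\epsilon'}$; substituting $\kappa = \kappa(p,d)$, $s = q/p$, $\epsilon' = \epsilon/p$, $\alpha = \alpha(p',d)$, and $\mint\mu\cX^{s+\epsilon'} = 2^{q+\epsilon}\int\norm{x}^{q+\epsilon}\dif\mu$ (likewise for $\nu$), this reduces to a dependence on $\epsilon,p,q,d$ and the $(q+\epsilon)$-th Euclidean moments of $\mu$ and $\nu$, as claimed. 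The only genuinely delicate step is the logarithmic borderline $d = 2p'$, which forces us beyond the literal statement of $\BC$ and into the refinement of \ref{it:log}; everything else is the rescaling already performed before the corollary together with elementary matching of exponents.
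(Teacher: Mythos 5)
Your proposal is correct and follows essentially the same route as the paper's proof: fix $\cX=\cY=2^p\norm{\cdot}^p$, verify $\BC(\kappa,\alpha)$ via the rescaling in \eqref{eq:wasserstein-p-bc}, set $s=q/p$ and $\epsilon'=\epsilon/p$ so that the moment hypothesis and the exponent $\frab{s-1}{s}=\frab{q-p}{q}$ match, and delegate the borderline $d=2p'$ and constant bookkeeping to \ref{it:log} and \ref{it:uniform} respectively. The one place you are even slightly more explicit than the paper is in invoking \ref{it:mu-equals-nu} to justify that, for $\mu=\nu$, one may use the first branch of \eqref{eq:wasserstein-p-bc} with rate $\varphi_{p,d}$ rather than $\varphi_{p\wedge 2,d}$; the paper leaves this implicit, but it is a necessary step and you have identified it correctly.
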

\begin{proof}
  The existence of $(q+\epsilon)$-th Euclidean moments is equivalent to $\mint\mu \cX^{s+\epsilon'} < \infty$ and $\mint\nu \cY^{s+\epsilon'} < \infty$ for $1 < s = q/p \le 2$ and $\epsilon' = \epsilon/p > 0$.
  Since Assumption $\BC(\kappa, \alpha)$ is satisfied for suitable $\kappa$ and $\alpha$ via \eqref{eq:wasserstein-p-bc}, the result follows due to \Cref{thm:unbounded-rates}.
  The additional $\log(n+1)$ term in $\varphi_{p,d}$ for $p = 2d$ is handled via comment \ref{it:log} below the theorem.
  Similarly, the dependency of the implicit constant follows from comment \ref{it:uniform}. 
\end{proof}

If sufficiently many moments exist, for example $q \ge \frac{dp}{d-p}$ for $d \geq 2p$ if $\mu = \nu$ or $q \ge \frac{dp}{d-(p \wedge 2)}$ for $d \ge 2(p \wedge 2)$ if $\mu \neq  \nu$, we conclude that the rates from \eqref{eq:upperBoundEmpiricalOT} generalize to the unbounded setting.
Compared to the upper bounds by \textcite{fournier2015rate}, which only apply for $\mu = \nu$, the moment conditions in \Cref{cor:wasserstein-rates} are loose by the term $\epsilon > 0$. Compared to the bounds by \textcite{manole2021sharp}, our result does not require sub-exponential tails of the underlying measures to ensure the rates known from compact settings, as it does not rely on quantitative bounds for the (empirical) dual solutions.
The lower bounds presented in \Cref{sec:lower} assert that the rates derived in \Cref{cor:wasserstein-rates} are (almost) sharp over the collection of probability measures with finite $(q+\epsilon)$-th Euclidean moment on $\RR^d$.
\begin{remark}{}{}
  There are instances where the upper bounds in \Cref{cor:wasserstein-rates} are not sharp.
  If $\mu = \nu$, for example, equation \eqref{eq:cts_connected_bounds} suggests faster-than-parametric rates on connected domains if $p > d/2$. 
  As mentioned in comment \ref{it:super-fast} below \Cref{thm:unbounded-rates}, this cannot be captured by our decomposition strategy. 
  Another well-known special case concerns $p=1$ and $d=2$, for which the sharp rate $(n\log(n+1))^{1/2}$ was established by \cite{talagrand1992ajtai} in compact settings and extended by \cite{yukich1992generalizations} to unbounded ones under suitable concentration assumptions.
  This improves \Cref{cor:wasserstein-rates} for $q=2$ by a factor of $\log(n+1)^{1/2}$.
  Our decomposition strategy is applicable in this case, however, and we can recover the rates by \cite{yukich1992generalizations} on the basis of \cite{talagrand1992ajtai}.
\end{remark}

\subsection{Wasserstein distances on Polish spaces}
\label{subsec:WassersteinPol}

The results of the previous section can be extended to Polish spaces that are finite dimensional in a suitable sense.
To this end, let $(\YC, d)$ be a Polish metric space and let $\XC$ be a Polish subset of $\YC$.
We pick this arrangement of $\XC$ and $\YC$ in order to account for lower complexity adaptation, which manifests itself if the support of $\mu$ is \enquote{less complex} than the one of $\nu$.
In particular, we only have to be able to control the complexity of $\XC$, not the one of $\YC$.

We consider the cost function $c(x,y) = d^p(x,y)$.
It suffices to consider $p\geq 1$, since one may replace the underlying ground metric by $d' \coloneqq d^p$ if $0 < p < 1$.
For a fixed element $x_0\in \XC$, let $\cX= 2^{p} d^p(\cdot, x_0)$ and $\cY = 2^{p} d^p(\cdot, x_0)$. By the triangle inequality, we observe $c\leq \cX\oplus \cY$. The corresponding $\cX$- and $\cY$-balls are given by
\begin{equation*}
  B_\X(r)
  =
  \big\{x\in\XC\,\big|\, d(x,x_0) \leq  2^{-1}r^{1/p}\big\}
  \qquad\text{and}\qquad
  B_\Y(r)
  =
  \big\{y\in\YC\,\big|\, d(y,x_0) \leq  2^{-1}r^{1/p}\big\}.
\end{equation*}
To analyze the convergence of $\smash{T_c(\hat \mu_n, \hat \nu_n)}$ to $T_c(\mu, \nu)$ in the $p$-Wasserstein distance, we impose a condition of dimensionality on the space $(\XC, d)$.
This requires the notion of \emph{covering numbers}.
For a set $U\subseteq \XC$, the $\delta$-covering number $\covering(\delta, U, d)$ of $U$ for $\delta>0$ is defined as the smallest integer $N\in \NN$ such that there are $x_1, \dots, x_n\in \XC$ with $U \subseteq \bigcup_{i = 1}^N B(x_i, \delta)$, where $B(x_i, \delta)\coloneqq \{x\in \XC \,|\, d(x,x_i) \leq \delta\}$ denotes the metric ball with radius $\delta$ around $x_i$. 

\begin{assumption*}{$\DIM(t)$}{}
  Let $t > 0$.
  Assume there exists a constant $k_\XC\geq 1$ such that any $U\subseteq \XC$ with diameter $\diam(U) \coloneqq \sup_{x,y\in U} d(x,y)$ satisfies, for any $\delta > 0$,
  \begin{equation*}
    \covering(\delta, U, d)
    \leq
    1+k_\XC \left(\frac{\diam(U)}{\delta}\right)^t.
  \end{equation*}
\end{assumption*}
This condition was introduced by \textcite{boissard2014mean} and places an upper bound on the \emph{Minkowski-dimension} of the metric space $(\XC, d)$.
For instance, $\DIM(t)$ is fulfilled if $(\XC, d) = (\RR^t, \|\cdot\|)$ with $t\in \NN$.
Assumption $\DIM(t)$ is also fulfilled if the space $\XC$ is a sufficiently regular $t$-dimensional sub-manifold of a Euclidean space equipped with the geodesic distance. 
Notably, the quantity $t > 0$ does not need to be integer-valued. 

If Assumption~$\DIM(t)$ holds for $\mu = \nu$ and $p \ge 1$, it follows from the proof of Proposition~5 in \textcite{weed2019sharp} for any $r\geq 1$ and $\mu\in\P\big(B_\XC(r)\big)$ that
\begin{subequations}
\label{eq:ConvergenceRateWassersteinPolish}
\begin{equation}\label{eq:ConvergenceRateWassersteinNull}
  \Exp\,\big| \T_{d^p}(\hat\mu_n, \hat\nu_n) \big|
  \lesssim
  r \varphi_{p,t}(n) 
\end{equation}
with a constant depending on $p$, $t$, and $k_\XC$ only.
For $\mu \neq \nu$ and $p = 1$, the dual formulation of optimal transport in conjunction with metric entropy bounds by \textcite{Kolmogorov1961} and methods laid out by \textcite{hundrieser2022empirical} can be used to show
\begin{equation}\label{eq:ConvergenceRateWassersteinAlternative}
  \Exp\,\big| \T_{d}(\hat\mu_n, \hat\nu_n) - T_{d}(\mu, \nu) \big| 
  \lesssim
  r	\varphi_{1,t}(n)
\end{equation}
\end{subequations}
for $r\geq 1$, $\mu\in \PC\big(B_\XC(r)\big)$, and $\nu \in \PC\big(B_\YC(r)\big)$ if $B_\XC(r)$ is connected (see \Cref{rem:connected} below).
The constant only depends on $t$ and $k_\XC$.
It is open whether a similar bound remains valid for the remaining regime $\mu \neq \nu$ and $p > 1$.
We formally prove inequalities \eqref{eq:ConvergenceRateWassersteinPolish} in \Cref{app:auxiliaryProofs}.

\begin{corollary}{Polish Wasserstein rates}{wassersteinPolish}
  Let $\X$ be a Polish subset of a Polish metric space $(\Y, d)$ such that $\X$ satisfies $\DIM(t)$ and let $c(x, y) = d^p(x,y)$ for $p \geq 1$.
  Assume $\mu\in\P(\XC)$ and $\nu \in \PC(\YC)$ possess finite $(q+\epsilon)$-th moments $\mint{(\mu + \nu)}d^{(q+\epsilon)}(\cdot,x_0)<\infty$ for $p < q \le 2p$ and $\epsilon > 0$.
  If either $\mu = \nu$ and $p \geq 1$, or $\mu \neq \nu$ and $p=1$ with $B_\XC(r)$ connected for any $r\geq 1$, then, for all $n\in\NN$,
  \begin{equation}\label{eq:wassersteinPolish}
    \Exp\,\big| \T_{d^p}(\hat\mu_n, \hat \nu_n) - T_{d^p}(\mu, \nu)\big| 
    \lesssim
    \varphi_{p,t}(n) + n^{-\frab{q-p}{q}},
  \end{equation}
where the constant only depends on $t$, $p$, $\epsilon$, $k_\XC$, and the $(q+\epsilon)$-th moments of $\mu$ and $\nu$.
\end{corollary}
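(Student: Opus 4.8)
The plan is to reduce the assertion to an application of \Cref{thm:unbounded-rates}, the only genuine work being to confirm that the bounded convergence assumption $\BC(\kappa,\alpha)$ holds for the marginal cost functions fixed just before the corollary. Concretely, I would take $\cX = \cY = 2^{p}d^p(\cdot,x_0)$. Then $c(x,y) = d^p(x,y) \le \cX(x)+\cY(y)$ follows from the triangle inequality together with the convexity bound $(a+b)^p \le 2^{p-1}(a^p+b^p)$, valid for $p\ge 1$; the associated $\cX$- and $\cY$-balls are precisely the metric balls $B_\X(r)$ and $B_\Y(r)$ displayed above; and the moment hypothesis $\mint{(\mu+\nu)}d^{q+\epsilon}(\cdot,x_0)<\infty$ is equivalent to $\mint\mu\,\cX^{s+\epsilon'}<\infty$ and $\mint\nu\,\cY^{s+\epsilon'}<\infty$ with $s = q/p\in(1,2]$ and $\epsilon' = \epsilon/p>0$, which is exactly the integrability demanded by the theorem.

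Second, I would verify $\BC(\kappa,\alpha)$ with $\alpha = \tfrac{1}{2}\wedge\tfrac{p}{t}$ --- or, in the borderline case $t = 2p$, with any $\alpha<1/2$, the sharp rate $n^{-1/2}\log(n+1)$ being restored afterwards by comment~\ref{it:log}. When $\mu = \nu$ and $p\ge 1$ the costs satisfy $c(x,x) = 0$ and $\cX = \cY$, so by comment~\ref{it:mu-equals-nu} it is enough to check the bounded convergence inequality for equal measures; since $T_{d^p}(\mu,\mu) = 0$, this amounts to $\Exp\big[T_{d^p}(\hat\mu_n,\hat\nu_n)\big]\lesssim r\,\varphi_{p,t}(n)$ for $\mu\in\P(B_\X(r))$ and $r\ge 1$, which is exactly inequality \eqref{eq:ConvergenceRateWassersteinNull}. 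When $\mu\ne\nu$ and $p = 1$ with $B_\X(r)$ connected, the required two-sample estimate $\Exp\,\big|T_d(\hat\mu_n,\hat\nu_n)-T_d(\mu,\nu)\big|\lesssim r\,\varphi_{1,t}(n)$ for $\mu\in\P(B_\X(r))$, $\nu\in\P(B_\Y(r))$ is exactly inequality \eqref{eq:ConvergenceRateWassersteinAlternative}. In both regimes the hidden constant depends only on $t$, $p$, and $k_\XC$, so $\BC(\kappa,\alpha)$ holds with a $\kappa$ of this form.

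Third, \Cref{thm:unbounded-rates}, specialized to equal sample sizes $m = n$, then yields $\Exp\,\big|T_{d^p}(\hat\mu_n,\hat\nu_n)-T_{d^p}(\mu,\nu)\big|\lesssim n^{-\alpha}+n^{-\frab{s-1}{s}}$; resubstituting $s = q/p$ turns the second term into $n^{-\frab{q-p}{q}}$ and the first into $\varphi_{p,t}(n)$, with the logarithm reinstated at $t = 2p$, which is \eqref{eq:wassersteinPolish}. The stated dependence of the constant on $t$, $p$, $\epsilon$, $k_\XC$, and the $(q+\epsilon)$-th moments of $\mu$ and $\nu$ is exactly what comment~\ref{it:uniform} supplies.

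I do not expect a real obstacle within the corollary proof itself: the entire analysis is outsourced to \Cref{thm:unbounded-rates} and to the bounded-domain estimates \eqref{eq:ConvergenceRateWassersteinPolish}, whose proofs are deferred to \Cref{app:auxiliaryProofs}. The points needing a little care are matching the two regimes of the corollary to the correct bounded-domain input --- the one-sided bound \eqref{eq:ConvergenceRateWassersteinNull} via comment~\ref{it:mu-equals-nu} when $\mu = \nu$, versus the genuine two-sample bound \eqref{eq:ConvergenceRateWassersteinAlternative} (which is precisely why connectedness of $B_\X(r)$ has to be assumed) when $\mu\ne\nu$ and $p = 1$ --- together with the bookkeeping for the logarithmic factor at $t = 2p$ and for the translation of moment exponents. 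The substantive difficulty lies elsewhere, namely in establishing \eqref{eq:ConvergenceRateWassersteinPolish}: the $\mu = \nu$ case rests on the proof of Proposition~5 in \textcite{weed2019sharp}, while the $\mu\ne\nu$, $p = 1$ case needs metric-entropy bounds and the duality-based arguments in the spirit of \textcite{hundrieser2022empirical}.
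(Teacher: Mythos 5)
Your proposal is correct and follows the same route as the paper's own (extremely terse) proof: translate the moment hypothesis into $\mint\mu\cX^{s+\epsilon'}<\infty$, $\mint\nu\cY^{s+\epsilon'}<\infty$ with $s=q/p$, verify $\BC(\kappa,\alpha)$ from the bounded-domain estimates \eqref{eq:ConvergenceRateWassersteinPolish} (with comment~\ref{it:mu-equals-nu} making the restriction to equal measures legitimate when $\mu=\nu$), apply \Cref{thm:unbounded-rates}, and invoke comments~\ref{it:log} and \ref{it:uniform} for the $t=2p$ logarithm and the constant's dependence. The only nitpick is the phrase about taking ``any $\alpha<1/2$'' at $t=2p$ and then restoring the $\log$ by comment~\ref{it:log} --- the comment's mechanism is not to degrade $\alpha$ and then recover, but to keep $\alpha=1/2$ while allowing a $\log(n+1)$ factor inside a modified $\BC$, which then propagates to the conclusion; this is a wording issue, not a gap.
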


\begin{proof}
Similarly to \Cref{cor:wasserstein-rates}, the result follows from \Cref{thm:unbounded-rates} together with the convergence rates in \eqref{eq:ConvergenceRateWassersteinPolish} and the imposed moment conditions.
\end{proof}

\begin{remark}{}{connected}
  It is sufficient in \eqref{eq:ConvergenceRateWassersteinAlternative}, and thus \eqref{eq:wassersteinPolish}, that $B_\XC(r)$ has a finite number of connected components uniformly over $r$.
Moreover, without imposing any conditions on the topology of $B_\XC(r)$, the upper bound \eqref{eq:wassersteinPolish} remains valid if $\varphi_{1,t}$ is replaced by
  \begin{equation*}
    \tilde \varphi_{1,t}(n)
    =
    \begin{cases}
      \varphi_{1, t}(n)                     & \text{if}~t < 2,     \\
    \varphi_{1,t}(n)\log(n+1) & \text{if}~t \ge 2.
    \end{cases}
  \end{equation*}
  This generalization is also detailed in \Cref{app:auxiliaryProofs}.
\end{remark}

In case of identical measures $\mu = \nu$, \Cref{cor:wassersteinPolish} captures the full regime $t > 0$ and $q > p$ (where one cannot improve over $q = 2p$, which already guarantees the same rates as in the compact setting).
In comparison, \textcite{boissard2014mean} require $t > p$ and $q > 2p \vee (tp / (t-p))$ for analog convergence rates, and the results by \textcite{weed2019sharp}, which work with a different notion of dimensionality, are only derived for compact spaces.
For different measures $\mu \neq \nu$, \Cref{cor:wassersteinPolish} reveals lower complexity adaptation: the bound in \eqref{eq:wassersteinPolish} is independent of the Polish space $\YC$.
In the semi-discrete setting, i.e., if $\mu$ is supported on a finite set $\XC\subseteq \YC$ while $\nu$ can be a general probability measure on $\YC$, it follows that $\DIM(t)$ is met for any $t > 0$. Hence, if the $(q+\epsilon)$-th moment of $\nu$ for $q>1$ and $\epsilon>0$ is finite, \Cref{cor:wassersteinPolish} asserts
\begin{align*}
  \Exp\,\big| \T_{d}(\hat\mu_n, \hat \nu_m) - T_{d}(\mu, \nu)\big| 
\lesssim
n ^{-\frar{1}{2}} + n^{-\frab{q-1}{q}}.
\end{align*}
This is in line with recent findings by \textcite{del2022central} for $p = 1$, who prove the parametric $n^{-1/2}$-rate if $\nu$ possesses a finite second moment. 
Section \ref{sec:lower} illustrates that the convergence rates achieved by \Cref{cor:wassersteinPolish} are nearly optimal for appropriate metric spaces $\XC\subseteq \YC$ when considering probability measures with a finite moment of order $q+\epsilon$. 
Again, the moment conditions are only loose by $\epsilon > 0$.

\subsection{Locally Lipschitz costs}
\label{subsec:LocallyLipschitz}

We now depart from the Wasserstein setting and look at a class of generic locally Lipschitz cost functions. The space $\X = \RR^d$ for $d\in\NN$ is assumed to be Euclidean while $\Y$ can be general Polish.
Like before, $\B \subset \RR^d$ denotes the Euclidean unit ball.
A suitable bounded convergence result for the application of \Cref{thm:unbounded-rates} is provided by \textcite{hundrieser2022empirical}.
For a Borel set $U\subseteq \YC$ and a continuous cost function $c$, for which the partially evaluated costs $c(\cdot, y)$ are bounded by $1$ and $1$-Lipschitz continuous on $\B$ for all $y\in U$, they showed
\begin{equation}\label{eq:lipschitz-bc}
    \Exp\,\big| \T_c(\hat\mu_n, \hat\nu_n) - \T_c(\mu, \nu) \big|
  \lesssim
  \varphi_{1,d}(n)
\end{equation}
for any measures $\mu\in \PC(\B)$ and $\nu \in \PC(U)$.
The constant in this inequality only depends on the dimension $d$.
We introduce the following requirement to characterize an appropriate class of locally Lipschitz costs.
\begin{assumption*}{$\LIP(p)$}{}
  Let $c$ be of form \eqref{eq:marginal-cost-bound} with $\cX(x) = \|x\|^p$ for $p > 0$ and $\cY\colon\Y\to\RRplus$ lower semicontinuous.
  Assume that $c(\cdot, y)$ is locally Lipschitz with first partial derivatives at $x$, if they exist, bounded by $\|x\|^{p-1} + r^{1-1/p}$ for all $x\in B_\X(r)$ and $y\in B_\Y(r)$.
\end{assumption*}
This assumption demands that the local Lipschitz constant of $c$ in its first argument does not grow too quickly if $p \ge 1$, or that it decays sufficiently fast if $p < 1$.
In Euclidean settings with $\Y = \RR^d$, for example, $\LIP(p)$ can be satisfied\footnote{
  One needs to rescale the cost function $c$ suitably and can then, e.g., pick $c_\Y(y) = 1 + \|y\|^p$.
} for any $p > 0$ if $c$ is locally Lipschitz with $c(x, y) \lesssim \smash{\big(1 + \|x\| + \|y\|\big)^p}$ and $\|\nabla_x c(x, y)\| \lesssim \smash{\big(1 + \|x\| + \|y\|\big)^{p-1}}$ Lebesgue-almost everywhere.
In particular, this includes the Wasserstein costs $c(x, y) = \|x - y\|^p$ for $p \ge 1$.
Note that even oscillating costs can be treated within our theory, at the expense of stricter moment conditions (i.e., large $p$) if the derivative of $c$ in its first component dominates.

\begin{corollary}{Lipschitz rates}{lipschitz}
  Let $\X = \RR^d$, $\Y$ be Polish, and $c\colon\XY\to\RRplus$ continuous such that $\LIP(p)$ is satisfied.
  If $\mu\in\P(\RR^d)$ possesses finite $(q+\epsilon)$-th Euclidean moments and $\nu\in\P(\Y)$ fulfills $\mint\nu \smash{\cY^{q/p+\epsilon}} < \infty$ for $p < q \le 2p$ and $\epsilon > 0$, then, for any $n\in\NN$,
  \begin{equation}\label{eq:LipschitzRate}
    \Exp\,\big| \T_c(\hat\mu_n, \hat\nu_n) - \T_c(\mu, \nu) \big|
    \lesssim
    \varphi_{1,d}(n) + n^{-\frab{q-p}{q}},
  \end{equation}
  where the constant only depends on $d$, $q$, $p$, $\epsilon$, as well as the moments of $\mu$ and $\nu$.
\end{corollary}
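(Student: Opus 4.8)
The plan is to reduce the statement to \Cref{thm:unbounded-rates} by verifying the bounded convergence assumption $\BC(\kappa,\alpha)$ for the marginal costs $\cX=\|\cdot\|^p$ and $\cY$ (these are the functions from \eqref{eq:marginal-cost-bound} given by $\LIP(p)$), with $\kappa$ depending only on $d$ and with $\alpha=\frac{1}{d\vee 2}\in(0,1/2]$; the extra logarithmic factor that $\varphi_{1,d}$ carries for $d=2$ is then pushed through via comment~\ref{it:log}, and the asserted dependence of the constant via comment~\ref{it:uniform}. Since $c$ is continuous with $c\le\cX\oplus\cY$, $\cX$ continuous and $\cY$ lower semicontinuous, the structural requirements of $\BC$ and of \Cref{thm:unbounded-rates} are met.

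For the bounded convergence step I would rescale to the compact estimate \eqref{eq:lipschitz-bc}. Fix $r\ge 1$, $\mu\in\P(B_\X(r))$, $\nu\in\P(B_\Y(r))$, and note $B_\X(r)=\{x:\|x\|^p\le r\}=r^{1/p}\B$. Let $u(x)=r^{-1/p}x$, so $u_\#\mu\in\P(\B)$ and $u_\#\hat\mu_n$ is the empirical measure of $n$ i.i.d.\ draws from $u_\#\mu$, and set $\tilde c(x',y)=c(r^{1/p}x',y)/C_r$ with $C_r=C_d\,r$ for a dimensional constant $C_d$. For $p\ge 1$ and $x\in B_\X(r)$ one has $\|x\|^{p-1}\le r^{1-1/p}$, so by $\LIP(p)$ the partial derivatives of $c(\cdot,y)$ are bounded by $2r^{1-1/p}$ on $B_\X(r)$, hence $c(\cdot,y)$ is Lipschitz there with constant $\lesssim_d r^{1-1/p}$; by the chain rule $\tilde c(\cdot,y)$ is $1$-Lipschitz on $\B$ once $C_d$ is large enough. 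Simultaneously $c\le\cX\oplus\cY\le 2r$ on $B_\X(r)\times B_\Y(r)$, so enlarging $C_d$ further gives $\tilde c\le 1$ on $\B\times B_\Y(r)$. Thus $\tilde c$ satisfies the hypotheses of \eqref{eq:lipschitz-bc} with $U=B_\Y(r)$; using that $T_c$ is linear in $c\mapsto\tilde c$ and invariant under the bijection $(x,y)\mapsto(u(x),y)$, and the two-sample $(n\wedge m)$ form of \eqref{eq:lipschitz-bc} from \textcite{hundrieser2022empirical},
\[
  \Exp\big|T_c(\hat\mu_n,\hat\nu_m)-T_c(\mu,\nu)\big|
  = C_r\,\Exp\big|T_{\tilde c}(u_\#\hat\mu_n,u_\#\hat\nu_m)-T_{\tilde c}(u_\#\mu,\nu)\big|
  \lesssim_d r\,\varphi_{1,d}(n\wedge m),
\]
which is $\BC(\kappa,\alpha)$ with $\kappa\asymp_d 1$ and $\alpha=\frac{1}{d\vee 2}$, up to the $d=2$ logarithm.

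It then remains to match moments and read off the rate. Put $s:=q/p\in(1,2]$ and $\epsilon':=\epsilon/(p\vee 1)>0$. Since $\cX=\|\cdot\|^p$, finiteness of the $(q+\epsilon)$-th Euclidean moment of $\mu$ yields $\mint\mu\cX^{s+\epsilon'}=\mint\mu\|\cdot\|^{q+p\epsilon'}<\infty$ because $q+p\epsilon'\le q+\epsilon$ and the integrand is nonnegative on a probability space; likewise $\mint\nu\cY^{q/p+\epsilon}<\infty$ gives $\mint\nu\cY^{s+\epsilon'}<\infty$ because $s+\epsilon'\le q/p+\epsilon$. \Cref{thm:unbounded-rates} now gives $\Exp|T_c(\hat\mu_n,\hat\nu_n)-T_c(\mu,\nu)|\lesssim n^{-\alpha}+n^{-\frab{s-1}{s}}$, and since $n^{-\alpha}=\varphi_{1,d}(n)$ for $d\ne 2$ (the $d=2$ log being furnished by comment~\ref{it:log} applied to the $\log$-augmented form of $\BC$) and $\frab{s-1}{s}=\frab{q-p}{q}$, this is exactly \eqref{eq:LipschitzRate}; the dependence of the constant follows from comment~\ref{it:uniform}.

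The main obstacle is the verification of $\BC(\kappa,\alpha)$ when $0<p<1$, since the gradient bound $\|x\|^{p-1}$ in $\LIP(p)$ blows up near the origin and $c(\cdot,y)$ is then not Lipschitz on $B_\X(r)$ with the clean constant $r^{1-1/p}$. One circumvents this by bounding increments $|c(x_1,y)-c(x_2,y)|$ through integrating the $\LIP(p)$ gradient bound along the segment $[x_1,x_2]$ and combining with $c\le\cX\oplus\cY\le 2r$; after rescaling by $u$ this still produces a cost $\tilde c(\cdot,y)$ whose modulus of continuity on $\B$ is controlled by a dimensional constant, which can be fed into the matching compact convergence estimate of \textcite{hundrieser2022empirical}. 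The remaining ingredients — the commutation of empirical measures with the deterministic map $u$, the linear scaling of $T_c$, and aligning $n^{-\alpha}$ with $\varphi_{1,d}$ — are routine.
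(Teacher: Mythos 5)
Your proof handles the case $p \ge 1$ correctly and in essentially the same way as the paper, and your bookkeeping of exponents ($s = q/p$, $\epsilon' = \epsilon/(p \vee 1)$) and the invocation of comments~\ref{it:log} and~\ref{it:uniform} are all fine. However, your treatment of $0 < p < 1$ has a genuine gap.

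You correctly identify that for $p < 1$ the linear rescaling $u(x) = r^{-1/p}x$ does not tame the gradient bound $\|x\|^{p-1}$, which blows up at the origin. But your proposed fix does not deliver the claimed rate $\varphi_{1,d}(n)$. Integrating the $\LIP(p)$ gradient bound $\|x\|^{p-1} + r^{1-1/p}$ along segments only yields a $p$-H\"older (not $1$-Lipschitz) modulus of continuity for the linearly rescaled cost on $\B$: the model example $c(\cdot,y) = \|\cdot\|^p$ on the unit ball is exactly $p$-H\"older and no better. Feeding a $p$-H\"older class into the compact convergence bounds of \textcite{hundrieser2022empirical} produces the rate $\varphi_{p,d}(n) = n^{-p/d}$ for $d > 2$, which is strictly slower than $\varphi_{1,d}(n) = n^{-1/d}$; so $\BC(\kappa, 1/(d\vee 2))$ is not established and the corollary's claimed bound is missed.

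The paper circumvents this by a \emph{nonlinear} radial-power rescaling $u(x) = \|x\|^{p/\beta - 1}x / r^{1/\beta}$ with $\beta > \max(1, p)$ (so that $u^{-1}(u') = r^{1/p}\|u'\|^{\beta/p - 1}u'$), and shows in \Cref{lem:scaled-function} that after this change of variables the scaled cost $c_r(\cdot,y) = \frac{p}{2d\beta r}\,c(u^{-1}(\cdot),y)$ is genuinely $1$-Lipschitz on $\B$: the derivative of the radial power map vanishes at the origin at a rate that exactly cancels the $\|x\|^{p-1}$ singularity. This is the essential device your proposal is missing, and without it the verification of $\BC(\kappa, \alpha)$ for $0 < p < 1$ is incomplete.
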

\begin{proof}
  We have to establish the bounded convergence condition $\BC(\kappa, \alpha)$ in this setting.
  To this end, fix $r \ge 1$.
  We consider the (nonlinear) scaling function $u(x) = \|x\|^{p/\beta - 1}x / r^{1/\beta}$ for some $\beta > \max(1, p)$, which can be checked to map $B_\X(r)$ onto the Euclidean unit ball $\B$.
  According to part 1 and 2 of \Cref{lem:scaled-function}, the scaled cost function $c_r\colon\B\times B_{\Y}(r) \to \RRplus$ defined by $c_r(\cdot, y) = \smash{\frac{p}{2d\beta r}} \,c\big(u^{-1}(\cdot), y\big)$ is bounded by $1$ and is $1$-Lipschitz in its first component for all $y\in B_{\Y}(r)$ due to Assumption~$\LIP(p)$.
  Let $\mu\in\P\big(B_\X(r)\big)$ and $\nu\in\P\big(B_\Y(r)\big)$.
  We conduct a change of variables along $u$ and find
  \begin{equation*}
    \Exp\,\big|\T_c(\hat\mu_n, \hat\nu_n) - \T_c(\mu, \nu)\big|
    =
    \frac{2d\beta r}{p}\,\Exp\,\big|\T_{c_r}(u_\#\hat\mu_n, \hat\nu_n) - \T_{c_r}(u_\#\mu, \nu)\big|
    \le
    \kappa r\,\varphi_{1,d}(n),
  \end{equation*}
  where we have exploited that $u_\#\hat\mu_n$ is the empirical measure of $u_\#\mu \in \P(\B)$.
  The final inequality follows from \eqref{eq:lipschitz-bc} with a $\kappa > 0$ that only depends
  on $d$ and $p$.
  The claim of the corollary is now a direct consequence of \Cref{thm:unbounded-rates} for $s = q / p$, as well as comment \ref{it:uniform} below the theorem for the nature of the constants.
\end{proof}

\Cref{cor:lipschitz} can easily be used to derive convergence results in settings where one measure is concentrated on a $t$-dimensional surface within a $d$-dimensional ambient space for $t < d$.
In this case, the smaller dimension $t$ governs the convergence rate.

\begin{example}{Lipschitz surfaces}{lipschitz-surface}
  Let $\XC$ be a Polish subset of $\YC=\RR^d$ for $d\in \NN$ and let $c\colon \RR^d \times \RR^d \rightarrow \RRplus$ satisfy $\Lip(p)$ for $\smash{c_{\XC}} =  c_\YC = \norm{\cdot}^p$ with $p > 0$.
  Assume that $\XC$ lies within the image of a bi-Lipschitz function $f\colon \RR^t \to \RR^d$, where $t < d$.
  It is straightforward to see that $c_f(x,y) = c\big(f(x),y\big)$ fulfills $\Lip(p)$ up to a scaling that depends on $f$ and $p$ only.
  Moreover, for any $\mu \in \PC(\XC)$ and $\nu \in \PC(\YC)$,
  \begin{equation*}
    \T_{c}(\mu, \nu) = \T_{c_f}(f^{-1}_{\#}\mu, \nu).
  \end{equation*}
  Since $f^{-1}$ is Lipschitz, moments of $\mu$ are finite if and only if the same moments of $f^{-1}_{\#}\mu$ are finite.
  We can thus conclude that \Cref{cor:lipschitz} asserts the convergence rate \eqref{eq:LipschitzRate} with dimension $t$ instead of $d$ for $T_{c}(\hat \mu_n, \hat \nu_n)$ if both $\mu$ and $\nu$ admit finite Euclidean moments of order $(q+\epsilon)$ with $q>p$ and $\epsilon>0$. 
\end{example}

Only few results that are comparable to \Cref{cor:lipschitz}, in the sense that they cover convergence rates for a generic class of locally Lipschitz cost functions in unbounded settings, have been established so far.
Most notably, the work of \textcite{manole2021sharp} requires the specific Euclidean cost structure $c(x, y) = h(x - y)$ for a suitable function $h$, and strong (sub-exponential) concentration properties of the measures $\mu$ and $\nu$.

\subsection{Smooth costs}
\label{subsec:SmoothCosts}

A similar strategy as in the previous section can be employed to derive improved convergence rates for costs that are not only Lipschitz continuous but additionally semi-concave in one component.
For convenience, we work with cost functions that are twice differentiable in their first argument: bounded second derivatives imply semi-concavity.
We again let $\X = \RR^d$ while $\Y$ can be arbitrary Polish, and we write $\B$ to denote the Euclidean unit ball.
For a Borel set $U\subseteq \YC$ such that all partial derivatives of $c(\cdot, y)$ up to order $2$ are bounded by $1$ on $\B$ for all $y\in U$, it is known \parencite{hundrieser2022empirical}, for any $\mu\in\PC(\B)$ and $\nu\in \P(U)$, that
\begin{equation}\label{eq:smooth-bc}
    \Exp\,\big| \T_c(\hat\mu_n, \hat\nu_n) - \T_c(\mu, \nu) \big|
  \lesssim
  \varphi_{2,d}(n),
\end{equation}
where the implicit constant only depends on $d$.
Note that \eqref{eq:smooth-bc} is a strict improvement over bound \eqref{eq:lipschitz-bc} whenever $d > 1$.

\begin{assumption*}{$\SMOOTH(p)$}{}
  Let $c$ be of form \eqref{eq:marginal-cost-bound} with $\cX(x) = \|x\|^p$ for $p > 0$ and $\cY\colon\Y\to\RRplus$ lower semi-continuous.
  Assume that $c(\cdot, y)$ is twice differentiable with partial derivatives of order $k\in\{1,2\}$ at $x$ bounded by $\|x\|^{p-k} + r^{1-k/p}$ for all $r \ge 1$, $x\in B_\X(r)$, $y\in B_\Y(r)$.
\end{assumption*}

Similar to Assumption $\LIP(p)$ of the previous section, this condition states that the magnitude of the derivatives of $c$ can have an influence on the moment requirements when applying \Cref{thm:unbounded-rates}.

\begin{corollary}{smooth rates}{smooth}
  Let $\X = \RR^d$, $\Y$ be Polish, and $c\colon\XY\to\RRplus$ continuous such that
  $\SMOOTH(p)$ is satisfied. If $\mu\in\P(\RR^d)$ possesses
  finite $(q+\epsilon)$-th Euclidean moments and $\nu\in\P(\Y)$ fulfills
  $\mint\nu \smash{\cY^{q/p+\epsilon}} < \infty$ for $ p<q\leq 2p$ and $\epsilon > 0$, then, for any $n\in\NN$,
  \begin{equation}\label{eq:smooth-rate}
    \Exp\,\big| \T_c(\hat\mu_n, \hat\nu_n) - \T_c(\mu, \nu) \big|
  \lesssim
  \varphi_{2,d}(n) + n^{-\frab{q-p}{q}},
  \end{equation}
  where the constant only depends on $d$, $q$, $p$, $\epsilon$, as well as the moments of $\mu$ and $\nu$.
\end{corollary}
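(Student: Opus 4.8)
The plan is to argue exactly as in the proof of \Cref{cor:lipschitz}, replacing the Lipschitz bounded-convergence estimate \eqref{eq:lipschitz-bc} by its smooth counterpart \eqref{eq:smooth-bc} and Assumption $\LIP(p)$ by $\SMOOTH(p)$. Concretely, the task is to verify the bounded convergence assumption $\BC(\kappa,\alpha)$ in the present setting, with $\alpha$ dictated by $\varphi_{2,d}$ (that is, $\alpha = \tfrac12$ for $d<4$ and $\alpha = \tfrac2d$ for $d>4$, with a logarithmic correction when $d=4$), and then to invoke \Cref{thm:unbounded-rates}. For this, fix $r\ge 1$ and consider the nonlinear rescaling $u(x) = \|x\|^{p/\beta-1}x/r^{1/\beta}$ with $\beta > \max(1,p)$, which maps $B_\X(r) = r^{1/p}\B$ onto the Euclidean unit ball $\B$, and define the scaled cost $c_r\colon \B\times B_\Y(r)\to\RRplus$ by $c_r(\cdot,y) = \lambda_r\, c\big(u^{-1}(\cdot),y\big)$ with a normalizing constant $\lambda_r$ proportional to $1/r$ (up to factors depending only on $d$ and $p$, as in \Cref{cor:lipschitz}).

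The first step is to show, using the relevant part of \Cref{lem:scaled-function} covering second-order derivatives together with Assumption $\SMOOTH(p)$, that all partial derivatives of $c_r(\cdot,y)$ up to order $2$ are bounded by $1$ on $\B$, uniformly over $y\in B_\Y(r)$. Granting this, a change of variables along $u$ — using that $u_\#\hat\mu_n$ is the empirical measure of $u_\#\mu\in\P(\B)$ and that $\T_c(\mu,\nu) = \lambda_r^{-1}\,\T_{c_r}(u_\#\mu,\nu)$ — yields, for all $\mu\in\P\big(B_\X(r)\big)$ and $\nu\in\P\big(B_\Y(r)\big)$,
\begin{equation*}
  \Exp\,\big|\T_c(\hat\mu_n,\hat\nu_n) - \T_c(\mu,\nu)\big|
  =
  \lambda_r^{-1}\,\Exp\,\big|\T_{c_r}(u_\#\hat\mu_n,\hat\nu_n) - \T_{c_r}(u_\#\mu,\nu)\big|
  \le
  \kappa\, r\,\varphi_{2,d}(n),
\end{equation*}
where the last inequality is \eqref{eq:smooth-bc} with $\kappa$ depending only on $d$ and $p$. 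This is precisely $\BC(\kappa,\alpha)$. \Cref{thm:unbounded-rates}, applied with $s = q/p\in(1,2]$ — the moment hypotheses of the corollary translating into $\mint\mu \cX^{s+\epsilon'} < \infty$ and $\mint\nu \cY^{s+\epsilon'} < \infty$ for a suitable $\epsilon' > 0$ — then delivers \eqref{eq:smooth-rate}. The $\log(n+1)$ term in $\varphi_{2,d}$ at $d = 4$ is incorporated via comment \ref{it:log}, and the stated dependence of the implicit constant follows from comment \ref{it:uniform}.

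I expect the only genuine work to lie in the derivative bound for $c_r$. Since $u$ is nonlinear, differentiating $x\mapsto c\big(u^{-1}(x),y\big)$ twice produces, besides the Hessian of $c$ contracted with two copies of $Du^{-1}$, an additional term in which $\nabla_x c$ is contracted with the second derivative of $u^{-1}$; one must check that, under the growth budget of $\SMOOTH(p)$ (order-$k$ derivatives at $x$ controlled by $\|x\|^{p-k} + r^{1-k/p}$) and after multiplying by $\lambda_r \asymp 1/r$, both contributions are $O(1)$ on $\B$, uniformly in $r\ge 1$ and $y\in B_\Y(r)$. This is a careful but routine computation which I would package into (the relevant part of) \Cref{lem:scaled-function}; the rest of the argument is a verbatim adaptation of the Lipschitz case.
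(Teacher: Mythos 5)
Your proposal takes essentially the same route as the paper: rescale the cost via the nonlinear map $u$, apply the smooth bounded-convergence estimate \eqref{eq:smooth-bc} to verify Assumption $\BC(\kappa,\alpha)$ with $\alpha$ governed by $\varphi_{2,d}$, and then invoke \Cref{thm:unbounded-rates} with $s = q/p$, exactly as in the paper's one-line proof referring back to \Cref{cor:lipschitz}. One small slip: you retained the Lipschitz-case constraint $\beta > \max(1,p)$ on the scaling exponent, but part~3 of \Cref{lem:scaled-function} — the part controlling second derivatives, which you correctly identify as the crucial ingredient — requires the stronger condition $\beta > \max(2,2p)$ (and the paper accordingly chooses $\lambda_r = \tfrac{p^2}{5d\beta^2 r}$ so that the order-$2$ partials of $c_r(\cdot,y)$ come out bounded by one). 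With $\beta$ only exceeding $\max(1,p)$, the map $u^{-1}$ need not be twice differentiable at the origin and the bound in \eqref{eq:scaled-partial-second-bound} would blow up, so the $C^2$ bound on $c_r$ you rely on would fail. Adjusting $\beta$ accordingly makes your argument match the paper's.
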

\begin{proof}
  The proof proceeds analogously to the one of \Cref{cor:lipschitz}.
  We only have to replace the scaled cost function by $c_r(\cdot, y) = \smash{\frac{p^2}{5d\beta^2r}}\,c\big(u^{-1}(\cdot), y\big)$ for $\beta > \max(2, 2p)$, for which \Cref{lem:scaled-function} together with Assumption $\SMOOTH(p)$ guarantees that $c_r(\cdot, y)$ and its derivatives up to order $2$ are bounded by one for any $y\in B_\YC(r)$. 
\end{proof}

\Cref{ex:lipschitz-surface} can be extended via \Cref{cor:smooth} to the setting where one probability measure is concentrated on a low-dimensional smooth surface.

\begin{example}{smooth surfaces}{smooth-surface}
  Consider the setting of \Cref{ex:lipschitz-surface} and additionally assume that $c$ fulfills $\SMOOTH(p)$.
  Further, assume that $f\colon \RR^t\rightarrow \RR^d$ has uniformly bounded second derivatives.
  Then the cost function $c_f$ also fulfills $\SMOOTH(p)$ up to scaling.
  Hence, by \Cref{cor:smooth}, upper bound \eqref{eq:smooth-rate} holds with dimension $t$ instead of $d$ if $\mu$ and $\nu$ both admit finite moments of order $(q+\epsilon)$ with $q>p$ and $\epsilon>0$. 
\end{example}

\section{Lower bounds}
\label{sec:lower}

We now summarize known results for lower bounds on the convergence of the empirical optimal transport cost, demonstrating that the rates obtained by \Cref{thm:unbounded-rates} are essentially sharp in various regimes.
We differentiate between parametric lower bounds induced by basic duality arguments, lower bounds induced by the domain dimensionality, and lower bounds induced by the tail behavior of the underlying measure. 

\subsection{Parametric lower bound via duality}
\label{subsec:lowerbound:duality}
For a lower semi-continuous cost function $c\colon \XC\times \YC \rightarrow \RRplus$ of the form \eqref{eq:marginal-cost-bound} with $\mint\mu c_\XC + \mint\nu c_\YC<\infty$, the optimal transport cost admits the dual formulation \parencite[Theorem 5.10]{villani2008optimal} 
\begin{align}\label{eq:dual}
  T_c(\mu, \nu)
  =
  \max_{\substack{(f,g) \in \Phi_c(\mu, \nu)}} \mint\mu f + \mint\nu g = \max_{\substack{(f,g) \in \Phi_c(\mu,\nu)}} \int f(x) \dif\mu(x) + \int g(y)  \dif\nu(y),
\end{align}
where $\Phi_c(\mu, \nu) = \big\{ (f,g) \in L^1(\mu) \times L^1(\nu) \,\big|\, f\oplus g \leq c\big\}$.
Dual solutions $(f,g)\in \Phi_c(\mu, \nu)$ for this optimization problem always exist.
Since $f$ and $g$ are $\mu$- and $\nu$-almost surely finite, respectively, it follows that $(f,g)\in \Phi_c(\hat \mu_n, \hat \nu_m)$ is satisfied almost surely, asserting that
\begin{align*}
  T_c(\hat \mu_n, \hat\nu_n) - T_c(\mu, \nu)
  \geq
  \mint{(\hat \mu_n - \mu)}f + \mint{(\hat \nu_n - \nu)}g
  \qquad\text{and}\qquad
  T_c(\hat \mu_n, \nu) - T_c(\mu, \nu)
  \geq
  \mint{(\hat \mu_n - \mu)}f.
\end{align*}
As long as $f$ and $g$ are not almost surely constant, this observation delivers lower bounds with parametric rates.
If $\mint{(\mu \otimes \nu)} c^2<\infty$, for example, \Cref{lem:lp-potentials} attests that $(f,g) \in L^2(\mu)\times L^2(\nu)$.
Then, the standard central limit theorem for independent empirical measures implies
\begin{subequations}
\label{eq:lowerbound_dual_combined}
\begin{align}
\label{eq:lowerbound_dual_one}
  \EE\,\big|T_c(\hat \mu_n, \hat\nu_n) - T_c(\mu, \nu)\big|&\gtrsim n^{-\frar{1}{2}}\sqrt{\Var_{X\sim \mu}[f(X)]+\Var_{Y\sim \nu}[g(Y)]},\\
  \label{eq:lowerbound_dual_two}
   \EE\,\big|T_c(\hat \mu_n, \nu) - T_c(\mu, \nu)\big|&\gtrsim n^{-\frar{1}{2}}\sqrt{\Var_{X\sim \mu}[f(X)]},
\end{align}
\end{subequations}
yielding a lower bound of parametric order $n^{-1/2}$.

Constancy of dual solutions has been thoroughly analyzed by \textcite[Section~4]{hundrieser2022unifying}.
If the measures $\mu$ and $\nu$ are different, constant dual solutions only arise in settings with a special symmetry, where optimal transport maps have a very simple structure and coincide with specific projections from the support of one measure onto the other.
Thus, non-constancy is the norm.
For identical measures $\mu = \nu$, constant dual solutions are more common.
However, under symmetric and positive definite cost functions, non-constant solutions always exists if the support of $\mu$ is split into two (or more) well-separated components \parencite[Lemma~11]{staudt2022uniqueness}.
For example, this holds if $\mu$ is supported on finitely many points.

\subsection{Lower bounds induced by dimensionality}
\label{subsec:lowerbound:dim}

In high dimensions, the bias of the empirical optimal transport cost typically converges at a speed strictly slower than the parametric $n^{-1/2}$ rate obtained in the previous section.
This dimension-dependent behavior is captured by various lower bounds that have been established in the literature.
Since
\begin{equation*}
  \EE\left[T_c(\hat \mu_n, \hat\nu_n) - T_c(\mu, \nu)\right]
  \geq
  \EE\left[T_c(\hat \mu_n, \nu) - T_c(\mu, \nu)\right],
\end{equation*}
which follows from the dual formulation \eqref{eq:dual} and is proven as point \ref{it:one-sample} at the end of \Cref{subsec:composition}, it suffices to consider the one-sample case. 

We first consider Wasserstein costs $c(x,y) = \norm{x-y}^p$ with $p\geq 1$ in Euclidean space $\RR^d$.
In case of identical uniform measures $\mu = \nu = \Unif[0,1]^d$ and $p \ge 1$, it is known by \textcite{dudley1969speed} for $d\neq 2$ and \textcite{ajtai1984optimal} for $d = 2$ that 
\begin{align*}
  \EE\big[T_{\norm{\cdot}^p}(\hat \mu_n, \mu)\big]
  \gtrsim 
  \begin{cases}
     n^{-\frar{p}{2}}\log(n+1)^{\frar{p}{2}} & \text{ if } d = 2,\\
     n^{-\frar{p}{d}} & \text{ if } d \geq 3,
  \end{cases}
\end{align*}
for all $n\in\NN$.
In contrast, if $\mu = \Unif[0,1]^d$ and $\nu= \Unif[a,a+1]^d$ with $a\in \RR^d$, \textcite{manole2021sharp} have shown for any $d \ge 1$ and $p>0$ that
\begin{align*}
   \EE\left[T_{\norm{\cdot}^p}(\hat \mu_n, \nu) - T_{\norm{\cdot}^p}(\mu, \nu)\right]
  \gtrsim
  n^{-\frab{p\wedge 2}{d}}.
\end{align*}
This result also remains valid for more generic convex cost functions of the form $c(x, y) = h(x - y)$ under suitable criterions of regularity that depend on $p > 0$ \parencite[Proposition~21]{manole2021sharp}.

More generally, on compact metric spaces $(\XC, d)$ that satisfy the covering condition $\covering(\delta, \XC, d) \asymp 1 + \delta^{-t}$ for some $t>0$ and all $\delta>0$, \textcite{singh2018minimax} have established
\begin{align*}
  \sup_{\mu \in\PC(\XC)} \EE[T_{d^p}(\hat \mu_n, \mu)]
  \gtrsim
  n^{-\frar{1}{2}}+n^{-\frar{p}{t}}
\end{align*}
for all $p\geq 1$ and $n\in \NN$, which implies a convergence rate strictly slower than $n^{-1/2}$ if the metric dimension $t$ is larger than $2p$.
The previous result can also be extended to basic settings affected by lower complexity adaptation, i.e., when the metric space $\X$ has a lower intrinsic dimension than $\Y$.
For example, letting $\Y = \X \times \Z$ for a metric space $(\Z, \tilde{d})$ and considering the $p$-Wasserstein ground costs $d_\times^p = d^p \oplus \tilde{d}^p$ on the product space, it follows by \textcite[Proposition~2.3]{hundrieser2022empirical} that
\begin{align*}
  \sup_{{\mu \in\PC(\XC), \nu \in \PC(\YC)}} \EE\big[T_{d_\times^p}(\hat \mu_n, \nu) - T_{d_\times^p}(\mu, \nu)\big]
  \gtrsim
  n^{-\frar{1}{2}} + n^{-\frar{p}{t}},
\end{align*}
where $\X$ is understood to be embedded into $\Y$ via $x \mapsto (x, z_0)$ for some fixed $z_0\in\Z$.

\subsection{Lower bound induced by tail behavior}\label{subsec:lowerbound:tail}

A comparison of the preceding lower bounds to the upper bounds established on the basis of \Cref{thm:unbounded-rates} in \Cref{subsec:WassersteinEucl} and \ref{subsec:WassersteinPol} shows that our results are essentially sharp (up to occasional $\log$-factors) if sufficiently many moments exist.
If the lack of moments is the limiting factor in \Cref{thm:unbounded-rates}, we below present examples that suggest that the rates we obtain can still be considered to be sharp up to the arbitrarily small term $\epsilon > 0$.

Consider the Euclidean Wasserstein setting with identical measures $\mu = \nu\in\P(\RR^d)$ first.
Assume that $\mu$ has a Lebesgue density proportional to $\norm{x}^{-q-d} 1(\norm{x} \geq 1)$, where $q > p \ge 1$.
Then $\mu$ has finite $r$-th moments $\mint\mu \|\cdot\|^r < \infty$ for any $r\in(0, q)$.
In \textcite{fournier2015rate}, it is shown that $\EE\big[\smash{T_{\norm{\cdot}^p}}(\hat \mu_n, \mu)\big] \gtrsim n^{-\frab{q-p}{q}}$ for any $n\in\NN$.
In the context of \Cref{cor:wasserstein-rates}, this implies the following observation:
for any $q > 1$ and $\epsilon > 0$, there exists some $\mu\in\PC(\RR^d)$ with finite $q$-th Euclidean moment such that
\begin{equation*}
  \EE\,T_{\norm{\cdot}^p}(\hat \mu_n, \mu)
  \gtrsim
  n^{-\frab{q+\epsilon - p}{q+\epsilon}}.
\end{equation*}
Up to the arbitrarily small term $\epsilon > 0$, both in the moment condition and in the rate, this matches the moment-induced part of the upper bound in \Cref{cor:wasserstein-rates}.

To address the general scenario of different population measures $\mu\neq\nu$ on generic ground spaces $\XC$ and $\YC$, we provide another example that admits similar conclusions regarding our moment requirements.
For a lower semi-continuous cost function $c\colon \XC\times \YC \rightarrow \RRplus$, we set $\nu = \delta_{y}$ for some fixed $y\in\Y$.
This simple choice implies that, for any $\mu\in\P(\X)$,
\begin{align*}
  T_c(\mu, \nu)
  =
  \mint\mu c(\cdot, y),
\end{align*}
essentially reducing the empirical estimation of the optimal transport cost to the estimation of the expectation of the random variable $Z = c(X, y)$ with $X\sim\mu$.
To lower-bound the mean absolute deviation when estimating $Z$, we rely on the theory of stable distributions \parencite{nolan2020univariate}.
By Theorem 3.12 in this reference, a measure $\tau\in \PC(\RRplus)$ is in the domain of attraction of an $s$-stable distribution with $s\in (1,2)$, denoted by $\tau \in \DC(s)$, if $\lim_{t\rightarrow \infty} t^s \tau(t,\infty) \in (0,\infty)$ exists.
In this case, there is a non-degenerate random variable $W$ such that i.i.d.\ random variables $Z_1, \dots, Z_n\sim \tau$ satisfy, for $n\rightarrow \infty$,
\begin{equation*}
  n^{\frab{s-1}{s}}\left(\frac{1}{n}\sum_{i = 1}^{n} Z_i - \EE\,Z_1\right)
  \xrightarrow{\;\;\mathcal{D}\;\;}
  W.
\end{equation*}
Note that $\DC(s)$ is non-empty for any $s\in (1,2)$ and that $\EE[Z_1]$ is well-defined.
If the distribution $\tau = c(\cdot, y)_\# \mu$ of $Z = c(X, y)$ is an element of $\DC(s)$, which can be realized by picking a suitable $\mu$ under generic conditions on the costs\footnote{
  Given $s\in(1, 2)$, measures $\mu\in \PC(\XC)$ with $c(\cdot, y)_\#\mu \in \DC(s)$ exist if $c(\cdot,y)$ admits a measurable right-inverse.
  The existence of a Borel measurable right-inverse is a mild assumption and is, e.g., guaranteed for continuous $c$ if there exists a compact set $\XC_k \subseteq \XC$ with $\range\big( c(\cdot,y)|_{\XC_k}\big) = [k-1,k]$ for each $k\in\NN$ \parencite[Theorem~6.9.7]{bogachev2007measure_two}.
}, we can conclude
\begin{equation*}
  \liminf_{n\rightarrow \infty} \EE\left[ n^{\frab{s-1}{s}} \big|T_c(\hat \mu_n, \nu)  - T_c(\mu, \nu)\big| \right]
  \geq
  \EE|W| > 0
\end{equation*}
for a suitable limiting random variable $W$.
This yields, for any $n\in\NN$, that
\begin{equation*}
  \EE\,\big| T_c(\hat \mu_n, \nu)  - T_c(\mu, \nu)\big|
  \gtrsim
  n^{-\frab{s-1}{s}}.
\end{equation*}
Since it can be shown that $\mint\mu c(\cdot, y)^r < \infty$ for any $r\in(1, s)$, we can conclude with the following statement:
for any $s\in(1, 2)$ and $\epsilon > 0$, under mild conditions on $c$, there exist $\mu\in\P(\X)$ and $\nu\in\P(\Y)$ with finite moments $\mint\mu c(\cdot, y)^s$ and $\mint\nu c(x, \cdot)^s$ for some $x\in\X$ and $y\in\Y$, such that
\begin{equation*}
  \EE\,\big|T_c(\hat \mu_n, \hat\nu_n) - T_c(\mu, \nu)\big|
  \gtrsim
  n^{-\frab{s+\epsilon - 1}{s+\epsilon}}.
\end{equation*}
Like in the Euclidean case above, this means that the moment requirements for \Cref{thm:unbounded-rates} are generally sharp up to $\epsilon > 0$.

\section{Proof of the main result}
\label{sec:proof}

The fundamental idea behind the proof of \Cref{thm:unbounded-rates} is to decompose an (arbitrary) optimal transport plan $\pi$ between $\mu$ and $\nu$ into a convex combination
\begin{equation}\label{eq:plan-decomposition}
  \pi = \sum_{l\in\NN} {a_l}\pi_l
\end{equation}
of (optimal) sub-plans $\pi_l$ whose support is restricted to bounded subsets of
$\XY$.
Since the optimal transport plan $\hat\pi_{n,m}$ between the empirical measures $\hat\mu_n$ and $\hat\nu_m$ deviates from the population plan $\pi$, it clearly cannot be decomposed in the exact same way.
Still, we can approximate $\hat\pi_{n,m}$ by a convex combination of empirical sub-plans $\hat\pi_{l,n,m}$ that follows the structure of decomposition \eqref{eq:plan-decomposition}.
Roughly speaking, $\hat\pi_{l,n,m}$ optimally matches as much mass as possible between points that fall into the support of $\pi_l$.
The difference of the empirical sub-plans to the population sub-plans can then be controlled by Assumption $\BC(\kappa, \alpha)$.
The remaining mass, which could not be matched by the plans $\hat\pi_{l,n,m}$, is simply transported via a product coupling.
Since the fraction of non-matching mass vanishes as $n,m\to\infty$, the resulting error terms also vanish in this limit.

In our proof, we construct the sub-plans $\pi_l$ by conditioning $\pi$ onto layered sets $A_l\subset\XY$ defined by the criterion
\begin{equation*}
  2^{l-1} \le \cX \oplus \cY < 2^l,
\end{equation*}
which makes sure that the transport induced by $\pi_l$ is well-behaved and can be controlled for all $l\in\NN$.
In particular, this approach allows us to take care of the problem that mass can in principle be transported arbitrarily far, simply by imposing moment conditions onto the marginals.
No explicit control of the transport plan or map, as it is for example established by \textcite{manole2021sharp} under a set of strong concentration properties, is needed.
Our strategy also circumvents a series of difficulties that arise when trying to approach the convergence of $\T_c(\hat\mu_n, \hat\nu_m)$ in the dual formulation, where analyzing properties of the function class $\F_c$ in unbounded settings can be daunting.
However, for certain well-behaved classes of cost functions, a related dual decomposition approach that we explore in \Cref{app:dual_decomposition} is feasible as well (and even beneficial in some respects).

\subsection{Composition bound and rates}
\label{subsec:composition}

The first step to make our arguments rigorous is established by the following lemma, which provides a general bound for the optimal transport costs between two composite measures in terms of the transport costs between the individual components.

\begin{lemma}{composition bound}{composition-bound}
  Let $\X$ and $\Y$ be Polish, $c\colon\XY \to\RRplus$ measurable,
  $\mu \in \P(\X)$, and $\nu \in \P(\Y)$. Assume $\mu = \sum_{l\in\NN} a_{l}
  \mu_{l}$ with $\mu_{l}\in\P(\X)$ and $\nu = \sum_{l\in\NN} b_{l} \nu_{l}$ with
  $\nu_{l}\in\P(\Y)$ for all $l\in\NN$, where $a,b\in\P(\NN)$.
  If $c_l > 0$ exist such that $\mint{(\mu_l\otimes\nu_k)} c \le c_l + c_k$ for each
  $l,k\in\NN$, then
  \begin{equation}\label{eq:composition-bound}
    T_c(\mu, \nu)
    \le
    \sum_{l\in\NN} \min(a_l, b_l)\, T_c(\mu_l, \nu_l)
    +
    4\sum_{l\in\NN} |a_l - b_l|\,c_l.
  \end{equation}
\end{lemma}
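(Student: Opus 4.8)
The plan is to exhibit an explicit feasible coupling $\pi\in\CC(\mu,\nu)$ whose cost is bounded by the right-hand side of \eqref{eq:composition-bound}. Writing $m_l=\min(a_l,b_l)$ and $\rho=\sum_{l}(a_l-m_l)$, I first note that $\sum_l(a_l-m_l)=1-\sum_l m_l=\sum_l(b_l-m_l)$ because $a,b\in\P(\NN)$, so $\rho$ is simultaneously the leftover mass on the $\mu$- and the $\nu$-side, and $2\rho=\sum_l|a_l-b_l|$. The idea is to transport the matched mass $m_l$ optimally between $\mu_l$ and $\nu_l$ and to join the two leftover parts by an arbitrary product coupling.

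If $\rho=0$ then $a=b$, and choosing near-optimal $\pi_l\in\CC(\mu_l,\nu_l)$ and setting $\pi=\sum_l a_l\pi_l\in\CC(\mu,\nu)$ already gives $T_c(\mu,\nu)\le\sum_l a_l\,\mint{\pi_l}c$, which is the claim. For $\rho>0$, I would introduce the probability measures $\sigma=\rho^{-1}\sum_l(a_l-m_l)\mu_l$ on $\X$ and $\tau=\rho^{-1}\sum_l(b_l-m_l)\nu_l$ on $\Y$, fix for each $l$ a plan $\pi_l\in\CC(\mu_l,\nu_l)$ with $\mint{\pi_l}c\le T_c(\mu_l,\nu_l)+\delta 2^{-l}$ for a given $\delta>0$ (optimizers need not exist, since $c$ is only assumed measurable), and set
\begin{equation*}
  \pi=\sum_{l\in\NN}m_l\,\pi_l+\rho\,(\sigma\otimes\tau).
\end{equation*}
Computing marginals, the first marginal of $\pi$ equals $\sum_l m_l\mu_l+\rho\sigma=\sum_l a_l\mu_l=\mu$ and likewise the second equals $\nu$, so $\pi\in\CC(\mu,\nu)$. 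Hence, by Tonelli for the nonnegative integrand $c$,
\begin{equation*}
  T_c(\mu,\nu)\le\mint{\pi}c=\sum_l m_l\,\mint{\pi_l}c+\rho\,\mint{(\sigma\otimes\tau)}c\le\sum_l m_l\,T_c(\mu_l,\nu_l)+\delta+\rho\,\mint{(\sigma\otimes\tau)}c.
\end{equation*}

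It remains to control $\rho\,\mint{(\sigma\otimes\tau)}c$. Expanding the product and using the hypothesis $\mint{(\mu_l\otimes\nu_k)}c\le c_l+c_k$,
\begin{equation*}
  \rho\,\mint{(\sigma\otimes\tau)}c=\frac1\rho\sum_{l,k}(a_l-m_l)(b_k-m_k)\,\mint{(\mu_l\otimes\nu_k)}c\le\frac1\rho\sum_{l,k}(a_l-m_l)(b_k-m_k)(c_l+c_k),
\end{equation*}
and splitting the double sum while using $\sum_k(b_k-m_k)=\sum_l(a_l-m_l)=\rho$ collapses the bound to $\sum_l\big[(a_l-m_l)+(b_l-m_l)\big]c_l=\sum_l|a_l-b_l|\,c_l\le4\sum_l|a_l-b_l|\,c_l$. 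Letting $\delta\downarrow0$ then yields \eqref{eq:composition-bound}. (This route actually produces the constant $1$, so the $4$ in the statement leaves slack — for instance for a variant that symmetrizes the bound on $\mint{(\mu_l\otimes\nu_k)}c$ before summing.)

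The only delicate points I anticipate are bookkeeping: making sure every series and double series converges, which follows from $\mint{(\mu_l\otimes\nu_k)}c\le c_l+c_k<\infty$ together with $a,b\in\P(\NN)$; avoiding any appeal to existence of optimal plans for $T_c(\mu_l,\nu_l)$, handled by the $\delta 2^{-l}$-near-optimal choices; and checking that $\sigma$ and $\tau$ are genuine probability measures, which is exactly why the case $\rho=0$ has to be separated out. All interchanges of summation and integration are justified by nonnegativity of $c$ via Tonelli, so I expect no substantial obstacle beyond careful handling of the countable convex combinations.
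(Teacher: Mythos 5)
Your proposal is correct and takes essentially the same route as the paper: optimally couple the matched mass $\min(a_l,b_l)$ within each layer via near-optimal sub-plans, then product-couple the two leftover measures and bound that term via $\mint{(\mu_l\otimes\nu_k)}c\le c_l+c_k$. Your observation about the constant is right: the paper relaxes $(a_l-b_l)_+(b_k-a_k)_+\le|a_l-b_l||a_k-b_k|$ before summing, which is what introduces the factor $4$; keeping the positive parts as you do yields $\sum_l|a_l-b_l|\,c_l$ directly, and the lemma holds with constant $1$.
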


Bounds in the vein of \Cref{lem:composition-bound} have been employed in several previous works on the statistical analysis of the empirical Wasserstein distance on a common Polish metric space $\XC = \YC$, see \textcite[Lemma 2]{dereich2013constructive}, \textcite[Lemma 2.2]{boissard2014mean}, \textcite[Lemma 5]{fournier2015rate}, \textcite[Proposition 1]{weed2019sharp}, \textcite[Lemma 2.2]{lei2020convergence}, and  \textcite[Proposition~4]{fournier2022convergence}.
Recently, a related bound was also derived for costs $c(x,y) = h(\norm{x-y})$ on Euclidean space, where $h\colon \RR_+ \to \RR_+$ grows  polynomially near zero, see \textcite[Inequality 4.1]{larsson2023concentration}.
However, all of these results are tailored to the analysis of $\T_c(\hat\mu_n, \mu)$ under specific norm-related cost structures, and are thus not flexible enough for our needs.
Further, except for \textcite{lei2020convergence}, who formulates a bound for homogenous costs on a Banach~space, and \textcite{larsson2023concentration}, the mentioned bounds do not capture the dependency on the partial transport costs $\T_c(\mu_l, \nu_l)$.
This is a pivotal element for leveraging Assumption $\BC(\kappa, \alpha)$ in the proof of \Cref{thm:unbounded-rates}.
Aside from this, the mentioned approaches are restricted to decompositions $\mu = \sum_{l \in \NN} \mu|_{B_l}$ and $\nu = \sum_{l \in \NN} \nu|_{B_l}$ for a common partition $\XC = \YC = \dot \bigcup_{l\in \NN} B_l$, whereas our bound permits arbitrary partitions of the measures $\mu$ and $\nu$ that can be defined on different spaces.

\begin{proof}[Proof of \Cref{lem:composition-bound}]
Consider the case $\T_c(\mu, \nu) < \infty$ first.
We will construct a transport plan $\pi\in\C(\mu, \nu)$.
Fix $\epsilon > 0$.
For each $l\in\NN$ with $a_l, b_l > 0$, let $\pi_l$ be an $\epsilon$-optimal plan between $\mu_l$ and $\nu_l$ (optimal plans do not have to exist for general measurable cost functions).
Define
  \begin{equation*}
    \bar{\pi}
    =
    \sum_{l\in\NN}\min(a_l, b_l)\,\pi_l.
  \end{equation*}
Then $\bar\pi c \le \sum_{l\in\NN} \min(a_l, b_l)\,T_c(\mu_l, \nu_l) + \epsilon$.
Furthermore, $\bar{\pi}$ has marginals $\bar\mu$ and $\bar\nu$ that satisfy
  \begin{equation*}
    (\mu - \bar\mu) = \sum_{l\in\NN} (a_l - b_l)_+\,\mu_l
    \qquad\text{and}\qquad
    (\nu - \bar\nu) = \sum_{l\in\NN} (b_l - a_l)_+\,\nu_l.
  \end{equation*}
The total mass of $\pi - \bar\pi$ equals
  \begin{equation*}
    (\pi - \bar\pi)(\XY)
    =
    \sum_{l\in\NN} (a_l - b_l)_+
    =
    \sum_{l\in\NN} (b_l - a_l)_+
    =
    \frac{1}{2} \sum_{l\in\NN} |a_l - b_l|
    \eqqcolon
    \frac{m}{2}.
  \end{equation*}
Defining $\tilde\pi = \frac{2}{m}(\mu - \bar\mu)\otimes(\nu - \bar\nu)$, one can check that $\pi = \bar\pi + \tilde\pi \in \C(\mu, \nu)$ is a feasible transport plan.
In particular, we find
  \begin{align*}
    \tilde\pi c
    &=
    \frac{2}{m}\int c(x, y) \dif (\mu - \bar\mu)(x) \dif(\nu - \bar\nu)(y) \\
    &\le
    \frac{2}{m}\sum_{l, k\in\NN} (a_l - b_l)_+(b_k - a_k)_+ \, (c_l + c_k) \\
    &\le
    \frac{2}{m}\sum_{l, k\in\NN} |a_l - b_l||a_k - b_k|\,(c_l + c_k) \\
    &\le
    4\sum_{l\in\NN} |a_l - b_l|\,c_l.
  \end{align*}
Since $\epsilon$ is arbitrary, noting that $T_c(\mu, \nu) \le \pi c$ shows the claimed inequality of the lemma.
In case that $\T_c(\mu, \nu) = \infty$, it suffices to observe that $\pi$ as constructed above is still a transport plan between $\mu$ and $\nu$ for arbitrary $\pi_l\in\CC(\mu_l, \nu_l)$, so the right-hand side of \eqref{eq:composition-bound} is infinite as well.
\end{proof}

For the next result, we operate on a given decomposition \eqref{eq:plan-decomposition} for an optimal plan $\pi$ and apply \Cref{lem:composition-bound} in order to control how much $\T_c(\hat\mu_n, \hat\nu_m)$ can exceed $\T_c(\mu, \nu)$.
The other direction, i.e., how much $\T_c(\mu, \nu)$ can exceed $\T_c(\hat\mu_n, \hat\nu_m)$, is handled by an argument stemming from the dual formulation \eqref{eq:dual} of optimal transport.
To prevent needless confusion, we stress that the sequence $(b_l)_{l\in\NN}$ introduced in the theorem below is merely a technical device that facilitates the ensuing argumentation.
Conceptually, $b_l$ can be identified with the probabilities $a_l$.
We employ the notation $\hat\mu_{l,n}$ and $\hat\nu_{l, m}$ to denote empirical measures of $\mu_l \in \P(\X)$ and $\nu_l\in\P(\Y)$ with $n$ respectively $m$ samples.

\begin{theorem}{composition rates}{composition-rates}
  Let $\X$ and $\Y$ be Polish, $c\colon\XY \to\RRplus$ lower semi-continuous, and $\pi\in\C(\mu, \nu)$ optimal for $\mu \in \P(\X)$ and $\nu\in\P(\Y)$.
  Assume $\pi = \sum_{l\in\NN} a_l\,\pi_l$, where $a \in \P(\NN)$ and $\pi_l\in\C(\mu_l, \nu_l)$ with $\mu_l\in\P(\X)$ and $\nu_l\in\P(\Y)$ for all $l\in\NN$.
  Further, assume there are $c_l > 0$ with $c \le c_l + c_k$ on the support of $\mu_l\otimes\nu_k$ for all $l, k\in\NN$, and $b_l \ge a_l$ such that $(b_l)_{l\in\NN}$ is monotonically decreasing and admits the existence of $l_n\in\NN$ with $b_{l_n} \asymp 1/n$.
  If there are $\alpha,\beta,\gamma\in(0, 1/2]$ and $r_l > 0$ with
  \begin{equation}\label{eq:bounded_rates_composition_rate}
    \Exp\,\big|\T_c(\hat\mu_{l,n}, \hat\nu_{l,m}) - \T_c(\mu_l, \nu_l)\big|
    \le
    r_l \big(n^{-\alpha} + m^{-\alpha}\big)
  \end{equation}
  for all $l, n, m\in\NN$, $\sum_{l\in\NN} r_l b_l^{1-\beta} < \infty$ and $\sum_{l\in\NN} c_l b_l^{1-\gamma} < \infty$, then, for all $n,m\in\NN$,
  \begin{equation}\label{eq:upperbound_composition_rate}
    \Exp\,\big|\T_c(\hat\mu_n, \hat\nu_m) - \T_c(\mu, \nu)\big|
    \lesssim
    n^{-\min(\alpha, \beta, \gamma)} + m^{-\min(\alpha, \beta, \gamma)}.
  \end{equation}
\end{theorem}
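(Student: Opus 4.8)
The plan is to control the two one‑sided deviations of $T_c(\hat\mu_n,\hat\nu_m)$ from $T_c(\mu,\nu)$ separately: the upward deviation by transporting the population decomposition $\pi=\sum_l a_l\pi_l$ to the empirical level and invoking the composition bound of \Cref{lem:composition-bound}, and the downward deviation by the dual formulation \eqref{eq:dual}. For the upward direction, I would first pass to an enriched probability space on which the sample $X_1,\dots,X_n$ carries labels $L_1,\dots,L_n$ with $(X_i,L_i)\sim\sum_l a_l\,(\mu_l\otimes\delta_l)$ and, likewise, $Y_1,\dots,Y_m$ carries labels $M_1,\dots,M_m$ with $(Y_j,M_j)\sim\sum_l a_l\,(\nu_l\otimes\delta_l)$; such labels exist precisely because $\mu=\sum_l a_l\mu_l$ and $\nu=\sum_l a_l\nu_l$, and the two labelled samples remain independent. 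Writing $N_l=\#\{i:L_i=l\}$, $\tilde N_l=\#\{j:M_j=l\}$ and letting $\hat\mu_{l,n}$, $\hat\nu_{l,m}$ be the empirical measures of the points with label $l$, one has $\hat\mu_n=\sum_l(N_l/n)\hat\mu_{l,n}$, $\hat\nu_m=\sum_l(\tilde N_l/m)\hat\nu_{l,m}$ with $\hat\mu_{l,n}$ supported in $\supp\mu_l$ and $\hat\nu_{l,m}$ in $\supp\nu_l$, so that $\mint{(\hat\mu_{l,n}\otimes\hat\nu_{k,m})}c\le c_l+c_k$ holds for all $l,k$. \Cref{lem:composition-bound} then gives, almost surely,
\begin{equation*}
  T_c(\hat\mu_n,\hat\nu_m)
  \le
  \sum_l \min\!\Bigl(\tfrac{N_l}{n},\tfrac{\tilde N_l}{m}\Bigr)\, T_c(\hat\mu_{l,n},\hat\nu_{l,m})
  + 4\sum_l \Bigl|\tfrac{N_l}{n}-\tfrac{\tilde N_l}{m}\Bigr|\, c_l .
\end{equation*}
Subtracting $T_c(\mu,\nu)=\sum_l a_l\mint{\pi_l}c\ge\sum_l a_l T_c(\mu_l,\nu_l)$ and regrouping, the positive part of the upward deviation is bounded by $|\mathrm{I}|+|\mathrm{II}|+\mathrm{III}$ with $\mathrm{I}=\sum_l\min(\tfrac{N_l}{n},\tfrac{\tilde N_l}{m})\,[T_c(\hat\mu_{l,n},\hat\nu_{l,m})-T_c(\mu_l,\nu_l)]$, $\mathrm{II}=\sum_l(\min(\tfrac{N_l}{n},\tfrac{\tilde N_l}{m})-a_l)\,T_c(\mu_l,\nu_l)$, and $\mathrm{III}=4\sum_l|\tfrac{N_l}{n}-\tfrac{\tilde N_l}{m}|\,c_l$.

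For $\mathrm{I}$, conditioning on the labels turns $\hat\mu_{l,n}$ and $\hat\nu_{l,m}$ into independent empirical measures of $\mu_l$ and $\nu_l$ of sizes $N_l$ and $\tilde N_l$, so \eqref{eq:bounded_rates_composition_rate} yields the conditional bound $r_l(N_l^{-\alpha}+\tilde N_l^{-\alpha})$ on the $l$‑th factor (the summand vanishing when $N_l$ or $\tilde N_l$ is zero). Interpolating this against the trivial bound $r_l$ (valid since $N_l^{-\alpha},\tilde N_l^{-\alpha}\le1$) and applying Jensen's inequality $\Exp[N_l^{1-\rho}]\le(na_l)^{1-\rho}$ for the concave power, one gets $\Exp|\mathrm{I}|\lesssim(n^{-\rho}+m^{-\rho})\sum_l r_l a_l^{1-\rho}$ with $\rho=\min(\alpha,\beta)$, and since $a_l\le b_l\le1$ and $\rho\le\beta$ the series is dominated by $\sum_l r_l b_l^{1-\beta}<\infty$. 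For $\mathrm{II}$ and $\mathrm{III}$, I would use $T_c(\mu_l,\nu_l)\le\mint{\pi_l}c\le 2c_l$ (from $c\le 2c_l$ on $\supp(\mu_l\otimes\nu_l)$), the elementary inequality $|\min(u,v)-a|\le|u-a|+|v-a|$, and the binomial fluctuation estimate $\Exp|\tfrac{N_l}{n}-a_l|\lesssim\sqrt{a_l/n}\wedge a_l\lesssim n^{-\gamma}a_l^{1-\gamma}$ (which uses $\gamma\le1/2$), obtaining $\Exp|\mathrm{II}|+\Exp\,\mathrm{III}\lesssim(n^{-\gamma}+m^{-\gamma})\sum_l c_l a_l^{1-\gamma}$, again dominated by $\sum_l c_l b_l^{1-\gamma}<\infty$. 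Since $n\ge1$ forces $n^{-\alpha},n^{-\beta},n^{-\gamma}\le n^{-\min(\alpha,\beta,\gamma)}$, the upward deviation is $\lesssim n^{-\min(\alpha,\beta,\gamma)}+m^{-\min(\alpha,\beta,\gamma)}$.

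For the downward deviation, I would invoke the dual representation \eqref{eq:dual}: a maximizing pair $(f,g)\in\Phi_c(\mu,\nu)$ exists and is $\mu$‑ resp.\ $\nu$‑almost surely finite, hence lies in $\Phi_c(\hat\mu_n,\hat\nu_m)$ almost surely, so $T_c(\mu,\nu)-T_c(\hat\mu_n,\hat\nu_m)\le\mint{(\mu-\hat\mu_n)}f+\mint{(\nu-\hat\nu_m)}g$. Taking positive parts and expectations and exploiting the independence of the two samples, the right‑hand side is $\lesssim n^{-1/2}\sqrt{\Var_\mu f}+m^{-1/2}\sqrt{\Var_\nu g}$; here $f\in L^2(\mu)$ and $g\in L^2(\nu)$ follow from \Cref{lem:lp-potentials}, the relevant second moment of $c$ being finite under the standing hypotheses (if necessary after truncating the potentials from above, which preserves membership in $\Phi_c$). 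As $\min(\alpha,\beta,\gamma)\le1/2$, this term is again $\lesssim n^{-\min(\alpha,\beta,\gamma)}+m^{-\min(\alpha,\beta,\gamma)}$, and combining the two one‑sided bounds proves \eqref{eq:upperbound_composition_rate}.

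The main obstacle I anticipate is the accounting in the estimate of $\mathrm{I}$: to pin the exponent down to exactly $\min(\alpha,\beta,\gamma)$ while never paying for more than the convergent series $\sum_l r_l b_l^{1-\beta}$ and $\sum_l c_l b_l^{1-\gamma}$, one must interpolate carefully, for each component $l$, between the genuine rate $N_l^{-\alpha}$ and the crude bound available when a component receives only a handful of samples — and it is precisely here that the monotone envelope $(b_l)$ with $b_{l_n}\asymp1/n$ earns its keep, since it amounts to splitting the sum at the critical index $l_{n\wedge m}$. A secondary point requiring care is the square‑integrability of the dual potentials in the downward estimate, which is immediate in the intended applications but must be justified (or bypassed via truncation) at the present level of generality.
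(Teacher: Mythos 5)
Your proposal follows the same overall architecture as the paper's proof: label the samples by the component $\mu_l$ (resp.\ $\nu_l$) they come from, split $\hat\mu_n$ and $\hat\nu_m$ into the per-label empirical measures, apply \Cref{lem:composition-bound} for the upward deviation, and use a feasible dual pair for the downward deviation. The handling of your terms $\mathrm{II}$ and $\mathrm{III}$ is slightly different from and arguably cleaner than the paper's: the interpolation $\sqrt{a_l/n}\wedge a_l \le n^{-\gamma} a_l^{1-\gamma}$ (valid exactly because $\gamma\le 1/2$, so one may raise $\sqrt{a_l/n}$ to the power $2\gamma$ and $a_l$ to the power $1-2\gamma$) replaces \Cref{lem:binomial-sum-convergence} and makes the monotonicity of $(b_l)$ and the existence of $l_n$ with $b_{l_n}\asymp 1/n$ unnecessary. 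Likewise, bounding $\min(N_l/n,\tilde N_l/m)\,N_l^{-\alpha}\le N_l^{1-\alpha}/n$ and applying Jensen (and, when $\alpha>\beta$, simply $N_l^{-\alpha}\le N_l^{-\beta}$ for $N_l\ge 1$ — ``interpolation'' is overstating it) gives the correct exponent $\min(\alpha,\beta)$ for term $\mathrm{I}$, whereas the paper routes this through \Cref{lem:inverse-binomial-moment}. So the upward estimate is fine.

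The genuine gap is in the downward direction. You need $f\in L^2(\mu)$ and $g\in L^2(\nu)$ to get the $n^{-1/2}$ rate from the variance/CLT argument, and you invoke \Cref{lem:lp-potentials} ``the relevant second moment of $c$ being finite under the standing hypotheses.'' But the standing hypotheses only give $\sum_l c_l b_l^{1-\gamma}<\infty$, which the paper turns into $(\mu\otimes\nu)c^p<\infty$ for $p=1/(1-\gamma)$; when $\gamma<1/2$ this is $p<2$, so the second moment of $c$ is not available and \Cref{lem:lp-potentials} only yields $L^p$-integrability. Your parenthetical about truncating the potentials does not repair this: replacing $f$ by $f\wedge M$ still lies in $\Phi_c(\hat\mu_n,\hat\nu_m)$ but destroys the equality $T_c(\mu,\nu)=\mint\mu f+\mint\nu g$, and the truncation error is not controlled at this level of generality. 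The correct fix — and what the paper does — is to work in $L^p$ with $p=1/(1-\gamma)\in(1,2]$ throughout and replace the CLT bound by the von Bahr--Esseen estimate of \Cref{lem:sample-mean}, which gives $\Exp\big|(\hat\mu_n-\mu)f\big|\lesssim n^{-(p-1)/p}=n^{-\gamma}$; that is exactly the exponent you need, and it requires no second moment.
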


\begin{remark}{constants}{}
  A look at the proof of \Cref{thm:composition-rates} reveals how the underlying constant in inequality \eqref{eq:upperbound_composition_rate} may be controlled.
  First, it depends on the sums $\smash{\sum_{l\in\NN} r_l b_l^{1-\beta}}$ and $\smash{\sum_{l\in\NN} c_l \, b_l^{1-\gamma}}$, the latter of which also upper bounds the integral $\mint{(\mu\otimes\nu)} c^p$ for $p = 1/(1-\gamma)$. 
  Secondly, it depends on the $p$-th moments of (arbitrary) dual optimizers $f\in L^p(\mu)$ and $g\in L^p(\nu)$ with $\T_c(\mu, \nu) = \mint\mu f + \mint\nu g$, which are shown to exist in the proof.
  Thirdly, it depends on the value $\rho\geq 1$ that fulfills the property $\frac{1}{\rho n}\leq b_{l_n} \leq \frac{\rho}{n}$ for all $n\in \NN$, which has to exist by assumption.
\end{remark}

\begin{proof}
  Let $p = 1/(1-\gamma) \in (1, 2]$.
  In a first step, we observe
  \begin{equation*}
    \mint{(\mu\otimes\nu)} c^p
    =
    \sum_{l,k\in\NN} a_l a_k (\mu_l\otimes\nu_k) c^p
    \le
    2^p \sum_{l,k\in\NN} \big(c_l^p + c_k^p\big)\,a_l a_k
    \le
    8 \sum_{l\in\NN} c_l^p b_l
    \le
    C
    \sum_{l\in\NN} c_l b_l^{1-\gamma}
    <
    \infty,
  \end{equation*}
  where the existence of a suitable $C > 0$ follows from $c_l^p b_l < c_lb_l^{1-\gamma}$ for large $l$, which is implied by $\smash{\sum_{l\in\NN} c_l b_l^{1-\gamma} < \infty}$.
  According to \Cref{lem:lp-potentials}, there are thus functions $f\in L^p(\mu)$ and $g\in L^p(\nu)$ such that $\T_c(\mu, \nu) = \mu f + \nu g$.
  Hence, it follows for empirical measures $\hat \mu_n, \hat \nu_m$ that $f\in L^1(\hat \mu_n)$ and $g\in L^1(\hat \nu_m)$ almost surely.
  The dual formulation \eqref{eq:dual} of optimal transport thus ensures $T_c(\hat\mu_n, \hat\nu_m) \ge \mint{\hat\mu_n} f + \mint{\hat\nu_m} g$ (almost surely), from which we derive the lower bound
  \begin{equation*}
    (\hat\mu_n - \mu) f + (\hat\nu_m - \nu) g
    \le
    \T_c(\hat\mu_n, \hat\nu_n) - \T_c(\mu, \nu).
  \end{equation*}
  
  For an upper bound, we note that the optimality of $\pi = \sum_{l\in\NN} a_l\pi_l$ implies $\pi_l\in\C(\mu_l, \nu_l)$ to be optimal as well \parencite[Theorem 5.19]{villani2008optimal}.
  Thus, $T_c(\mu, \nu) = \sum_{l\in\NN} a_l T_c(\mu_l, \nu_l)$.
  Next, since we are only interested in distributional properties, we can assume that the empirical samples generating $\hat\mu_n$ and $\hat\nu_m$ are obtained in a two-stage procedure:
  first, a multinomial variable $N = (N_l)_{l\in\NN} \sim \Mult(n, a)$ is drawn.
  Then, $N_l$ points $X_{l,1}, \ldots, X_{l, N_{l}}$ are i.i.d.-sampled from $\mu_{l}$.
  Likewise, we treat the sampling of $Y_1, \ldots, Y_m$ with multinomial frequencies $M \sim \Mult(m, a)$.
  Let
  \begin{equation*}
    \tilde\mu_{l, n}
    = 
    \frac{1}{N_l}\sum_{i = 1}^{N_l} \delta_{X_{l, i}}
    \qquad\text{and}\qquad
    \tilde\nu_{l, m}
    =
    \frac{1}{M_l}\sum_{i = 1}^{M_l} \delta_{Y_{l, i}}
  \end{equation*}
  whenever $N_l > 0$ or $M_l > 0$, respectively.
  It follows by construction that $ \hat\mu_n = \sum_{l\in\NN} \frac{N_l}{n}\,\tilde\mu_{l,n}$ and likewise for $\hat\nu_m$.
  Crucially, conditional on $N_l = n_l$ and $M_l = m_l$, the measure $\tilde\mu_{l, n}$ is distributed like an empirical measure of $\mu_l$ with $n_l$ samples, and $\tilde\nu_{l, m}$ like an empirical measure of $\nu_l$ with $m_l$ samples.
  Observing $\mint{(\tilde\mu_{l, n}\otimes \tilde\nu_{k, m})}c \le c_l + c_k$ by assumption, we can make use of \Cref{lem:composition-bound} to obtain
  \begin{align*}
    T_c(\hat\mu_n, \hat\nu_m)
    \le
    \sum_{l\in\NN} \left(\frac{N_l}{n}\wedge \frac{M_l}{m}\right)\, T_c(\tilde\mu_{l, n}, \tilde\nu_{l, m})
    +
    4\,\sum_{l\in\NN} c_l \left|\frac{N_l}{n} - \frac{M_l}{m}\right|,
  \end{align*}
  where we set $T_c(\tilde\mu_{l, n}, \tilde\nu_{l, m}) = 0$ whenever $\tilde\mu_{l, n}$ or $\tilde\nu_{l, m}$ are not well-defined (i.e., when $N_l M_l = 0$).
  Furthermore, we notice $\T_c(\tilde\mu_{l, n}, \tilde\nu_{l, m}) \le 2c_l$ and thus (generously) estimate
  \begin{align*}
   \left(\frac{N_l}{n}\wedge \frac{M_l}{m}\right)\, \T_c(\tilde\mu_{l, n}, \tilde\nu_{l, m}) - a_l \T_c(\mu_l, \nu_l)
    &\le
    a_l \big(\T_c(\tilde\mu_{l, n}, \tilde\nu_{l, m}) - \T_c(\mu_l, \nu_l)\big)
    + 2c_l \left|\left(\frac{N_l}{n}\wedge \frac{M_l}{m}\right) - a_l\right| \\
    &\le
    1(N_lM_l > 0)\,a_l \big(\T_c(\tilde\mu_{l, n}, \tilde\nu_{l, m}) - \T_c(\mu_l, \nu_l)\big) \\
    &\qquad+ \frac{2}{n}c_l\,|N_l - na_l| + \frac{2}{m}c_l\,|M_l - ma_l|,
  \end{align*}
  where we have inserted a non-trivial zero to establish the first inequality, and used that
  \begin{equation*}
    a - c
    \leq
    \min(a,b) - c
    \leq
    b - c
    \quad ~\text{for}~ a,b,c\in \RR, a \leq b
  \end{equation*}
  to derive the second inequality.
  Consequently,
  \begin{align*}
    T_c(\hat\mu_n, \hat\nu_m) - T_c(\mu, \nu)
    &\le
    \sum_{l\in\NN} \left(\left(\frac{N_l}{n}\wedge \frac{M_l}{m}\right)\, T_c(\tilde\mu_{l, n}, \tilde\nu_{l, m})
    - a_l T_c(\mu_l, \nu_l)\right)
    +
    4\,\sum_{l\in\NN} c_l \left|\frac{N_l}{n} - \frac{M_l}{m}\right|\\
    &\le
    \sum_{l\in\NN} 1(N_lM_l > 0)\, a_l \left(T_c(\tilde\mu_{l, n}, \tilde\nu_{l, m})
    - T_c(\mu_l, \nu_l)\right) \\
    &\qquad+
    \frac{6}{n}\sum_{l\in\NN} c_l \left|N_l - na_l\right| + \frac{6}{m}\sum_{l\in\NN}c_l \left|M_l - ma_l\right|.
  \end{align*}
  Due to the independence of the samples, the measures $\tilde\mu_{l, n}$ and $\tilde\nu_{l, m}$ are empirical measures of $\mu_l$ and $\nu_l$ when conditioned on $N_l$ and $M_l$.
  By assumption \eqref{eq:bounded_rates_composition_rate}, we hence find
  \begin{equation}\label{eq:upperbound_independence}
    \Exp\big[\T_c(\tilde\mu_{l, n}, \tilde\nu_{l, m}) - \T_c(\mu_l, \nu_l)\,\big|\, N_l, M_l\big]
  \le
  r_l \big(N_l^{-\alpha} + M_l^{-\alpha}\big)
  \end{equation}
  for $N_lM_l > 0$.
  Putting together the established upper and lower bounds, we obtain
  \begin{align}
    \Exp\,\big|\T(\hat\mu_n, \hat\nu_m) - \T(\mu, \nu)\big|
    &\le
    \sum_{l\in\NN} r_l a_l\,\Exp\big[1(N_l > 0) N_l^{-\alpha} + 1(M_l > 0)M_l^{-\alpha}\big] \nonumber \\
    &\qquad+
    \frac{6}{n}\sum_{l\in\NN} c_l\,\Exp\,\big|N_l - na_l\big|
    + \frac{6}{m}\sum_{l\in\NN} c_l\,\Exp\,\big|M_l - ma_l\big| \label{eq:composite-rates-components} \\
    &\qquad+
    \Exp\big[|(\hat\mu_n - \mu) f|\big] + \Exp\big[|(\hat\nu_m - \nu) g|\big]. \nonumber
  \end{align}
  Since $f$ and $g$ are $p$-integrable with respect to $\mu$ and $\nu$, the two final terms have $n^{-(p-1)/p} = n^{-\gamma}$ and $m^{-\gamma}$ rates with underlying constants that depend on the magnitude of the $p$-th moments (see \Cref{lem:sample-mean}).
  Furthermore, we apply \Cref{lem:binomial-sum-convergence} to establish
  \begin{equation*}
    \frac{1}{n}\sum_{l\in\NN} c_l\,\Exp\,\big|N_l - na_l\big|
    \lesssim
    n^{-\gamma}
  \end{equation*}
  with an underlying constant that is determined by $\sum_{l\in \NN} c_l b_l^{1- \gamma}$ and the value $\rho\geq 1$ such that $\frac{1}{\rho n} \leq b_{l_n} \leq \frac{\rho}{n}$ for all $n\in \NN$.
  An analogous inequality holds for $M_l$.
  Finally, \Cref{lem:inverse-binomial-moment} with the choices $\varphi(x) = x^\alpha$ and $a = 2$ implies $\smash{\Exp\big[1(N_l > 0)N_l^{-\alpha}\big]} \le 2(na_l)^{-\alpha}$.
  If $\alpha \le \beta$, we simply conclude
  \begin{subequations}%
  \begin{equation}\label{eq:bounded-rate-binomial-1}
    \sum_{l\in\NN} r_la_l\,\Exp\big[1(N_l > 0)N_l^{-\alpha}\big]
    \le
    2n^{-\alpha} \sum_{l\in\NN} r_la_l^{1-\alpha}
    \lesssim
    n^{-\alpha},
  \end{equation}
  where we have used $a_l^{1-\alpha} \le a_l^{1-\beta} \le b_l^{1-\beta}$ to control the sum on the right.
  If $\beta < \alpha$, on the other hand, we employ \Cref{lem:inverse-binomial-moment} with $\varphi(x) = \smash{x^\beta}$ to
  derive
  \begin{equation}\label{eq:bounded-rate-binomial-2}
    \sum_{l\in\NN} r_la_l\,\Exp\big[1(N_l > 0)N_l^{-\alpha}\big]
    \le
    \sum_{l\in\NN} r_la_l\,\Exp\big[1(N_l > 0)N_l^{-\beta}\big]
    \le
    2n^{-\beta} \sum_{l\in\NN} r_la_l^{1-\beta}
    \lesssim
    n^{-\beta}.
  \end{equation}
  \label{eq:bounded-rate-binomial}%
  \end{subequations}%
  Since these arguments hold for $M_l$ as well, the claim of the theorem is established.
\end{proof}

From here on, it is straightforward to establish \Cref{thm:unbounded-rates} as an application of \Cref{thm:composition-rates}.
We mainly have to construct a suitable decomposition of an optimal transport plan $\pi\in\CC(\mu, \nu)$ and verify the moment conditions required by \Cref{thm:composition-rates} based on the assumptions $\mint\mu\cX^{s+\epsilon} < \infty$ and $\mint\nu\cY^{s+\epsilon} < \infty$.

\begin{proof}[Proof of \Cref{thm:unbounded-rates}]
  For convenience, we will assume in the following that $\cX, \cY \ge 1$.
  This is always possible by offsetting the marginal costs $\cX$ and $\cY$, which (a) does not affect the moment conditions required in the theorem and (b) leaves Assumption~$\BC(\kappa, \alpha)$ unimpaired.

  Set $A_l = \big\{(x,y)\in\XY\,|\, 2^l \le \cX(x) + \cY(y) < 2^{l+1}\big\}$ for $l\in\NN$.
  Let $\pi\in\CC(\mu, \nu)$ be an optimal transport plan and set $a_l = \pi(A_l)$ as well as $\pi_l = \pi|_{A_l}/a_l$ (which is well-defined if $a_l > 0$). Then,
  \begin{equation*}
    \pi = \sum_{l\in\NN} a_l \pi_l.
  \end{equation*}
  Denote the marginals of $\pi_l$ as $\mu_l$ and $\nu_l$.
  Letting $c_l = 2^{l+1}$, note that the support of $\mu_l$ is contained in $B_\X(c_l)$ and the one of $\nu_l$ in $B_\Y(c_l)$.
  Therefore, $c \le \cX \oplus \cY \le c_l + c_k$ holds on the support of
  $\mu_l\otimes\nu_k$ for any $l,k\in\NN$.
  Due to Assumption~$\BC(\kappa, \alpha)$, the choice $r_l = \kappa 2^{l+1}$ yields
  \begin{equation}\label{eq:aux-bounded-convergence}
    \Exp\,\big|\T_c(\hat\mu_{l,n}, \hat\nu_{l,m}) - \T_c(\mu_l, \nu_l)\big|
    \le
    r_l \big(n^{-\alpha} + m^{-\alpha}\big)
  \end{equation}
  for all $l, n, m\in\NN$.
  Next, we use the property $\cX \oplus \cY \ge 2^l$ on $A_l$ to establish
  \begin{align*}
    \sum_{l\in\NN} c_l^{s+\epsilon} a_l
    =
    2^{s+\epsilon} \sum_{l\in\NN} 2^{(s+\epsilon)l} \pi(A_l)
    \le
    2^{s+\epsilon}\,\pi(\cX \oplus \cY)^{s+\epsilon}
    \le
    4^{s+\epsilon} \big(\mint\mu \cX^{s+\epsilon} + \mint\nu \cY^{s+\epsilon}\big)
    <
    \infty,
  \end{align*}
  where the moment assumptions placed on $\mu$ and $\nu$ have been exploited.
  In particular, we obtain the insight that $a_l \le b_l \coloneq K 2^{-l(s+\epsilon)}$ for $K = 4^{s+\epsilon} \big(\mint\mu \cX^{s+\epsilon} + \mint\nu \cY^{s+\epsilon}\big)$.
  We now set $\beta = \gamma = (s - 1)/s$.
  Observing $(s+\epsilon)(1-\beta) = 1 + \epsilon/s$, we conclude
  \begin{subequations}
  \begin{align}
    \sum_{l\in\NN} r_l b_l^{1-\beta}
    =
    2\kappa\,K \sum_{l\in\NN} 2^{l} 2^{-l(s+\epsilon)(1-\beta)}
    =
    \frac{2\kappa K}{1 - 2^{-\epsilon/s}}
    <
    \infty
  \shortintertext{and similarly}
    \sum_{l\in\NN} c_l b_l^{1-\gamma}
    =
    2K \sum_{l\in\NN} 2^l 2^{-l(s+\epsilon)(1-\gamma)}
    =
    \frac{2K}{1 - 2^{-\epsilon/s}}
    <
    \infty.
  \end{align}
  \label{eq:bounded-rates-constants}%
  \end{subequations}
 Since $b_l$ is monotonically decreasing and the choice $l_n = \big\lceil\log_2(Kn)/(s+\epsilon)\big\rceil$ satisfies $1/2n \le b_{l_n} \le 1/n$, all conditions for \Cref{thm:composition-rates} are satisfied. 
\end{proof}

Finally, we substantiate the claims that follow \Cref{thm:unbounded-rates} in the introduction.
Points \ref{it:mu-equals-nu} to \ref{it:independence} are direct conclusions of the assumptions or the proof and require no further explanation.
In contrast, points \ref{it:log}, \ref{it:one-sample}, and \ref{it:uniform} are more subtle and deserve additional justification.

\begin{itemize}[leftmargin=2em]
  \item[\ref{it:log}] In order to cover settings with additional log-terms in Assumption~$\BC(\kappa, \alpha)$, for example $n^{-\alpha}\log(n + 1)^\delta$ for $\delta > 0$ instead of just $n^{-\alpha}$, we have to reconsider inequalities \eqref{eq:bounded-rate-binomial-1} and \eqref{eq:bounded-rate-binomial-2} in the proof of \Cref{thm:composition-rates}.
    In case that $\alpha \le \beta$, we observe that
    \begin{equation*}
      \Exp\,\big[1(N_l > 0)N_l^{-\alpha}\log(N_l + 1)^\delta\big]
      \le
      \Exp\,\big[1(N_l > 0)N_l^{-\alpha}\big] \log(n + 1)^\delta
    \end{equation*}
    for all $l\in\NN$. Thus, the term $\log(n+1)^\delta$ appears as an additional
    factor on the right-hand side of \eqref{eq:bounded-rate-binomial-1}.
    If $\beta < \alpha$, on the other hand, we note that there exists $C > 0$ such that $\log(k + 1)^\delta \le C \, k^{\alpha-\beta}$ for all $k\in\NN$ and conclude
    \begin{equation*}
      \Exp\,\big[1(N_l > 0)N_l^{-\alpha}\log(N_l + 1)^\delta\big]
      \le
      C\,\Exp\,\big[1(N_l > 0)N_l^{-\beta}\big],
    \end{equation*}
    which shows that the rate in \eqref{eq:bounded-rate-binomial-2} stays untouched (albeit with a different constant that depends on $\alpha - \beta$ and $\delta$).
    We conclude that the term $\log(n+1)^\delta$ is replicated in the upper bound \eqref{eq:unbounded-rates} of \Cref{thm:unbounded-rates} in case that $\alpha \le s/(1+s)$, but that there is no difference in the result if $s/(1+s) < \alpha$.
    The same argumentation can be used to treat generic rates $n^{-\alpha} h(n)$, where $h$ is monotonically increasing and grows slower than any power of $n$.

  \item[\ref{it:one-sample}] Employing the dual formulation \eqref{eq:dual} of optimal transport, we observe for any $m\in\NN$ that
    \begin{align*}
      \Exp\,\big|\T_c(\hat\mu_{n}, \nu) - \T_c(\mu, \nu)\big|
      &=
      \Exp\,\big|\max_{f,g}\big(\hat\mu_{n} f + \mint\nu g\big) - \T_c(\mu, \nu)\big| \\
      &=
      \Exp\,\big|\max_{f,g}\big(\hat\mu_{n} f + \Exp[\hat\nu_m g]\big) - \T_c(\mu, \nu)\big| \\
      &\le
      \Exp\,\big|\max_{f,g}\big(\hat\mu_{n} f + \hat\nu_m g\big) - \T_c(\mu, \nu)\big| \\
      &=
      \Exp\,\big|\T_c(\hat\mu_{n}, \hat\nu_m) - \T_c(\mu, \nu)\big|.
    \end{align*}
    In the third line, we utilized independence of $\hat \mu_n$ and $\hat \nu_m$, and that pulling the expectation out of the supremum and the absolute value only increases the value via Jensen's inequality.

  \item[\ref{it:uniform}]
    In order to bound the constants in \Cref{thm:unbounded-rates}, we have to understand the constants of the different contributions on the right-hand side of inequality \eqref{eq:composite-rates-components} in the proof of \Cref{thm:composition-rates}.
    For the first line, equation~\eqref{eq:bounded-rate-binomial} implies constants that are a universal multiple of $R = \smash{\sum_{l\in\NN} r_lb_l^{1-\beta}}$.
    The constants in the second line of \eqref{eq:composite-rates-components} are
    documented by \Cref{lem:binomial-sum-convergence}, where we note that $1/2n \le b_{l_n} \le 2/n$ (i.e., $\rho = 2$) for the choice of $b_l$ and $l_n$ in the proof of \Cref{thm:unbounded-rates}.
    Therefore, the respective constants are bounded by multiples of $C = \sum_{l\in\NN} c_lb_l^{1-\gamma}$.
    According to \eqref{eq:bounded-rates-constants}, the values of $R$ and $C$ are each bounded by
    \begin{equation*}
      \frac{2\cdot 4^{s+\epsilon} (\kappa + 1)}{1 - 2^{-\epsilon/s}}
      \big(\mint\mu\cX^{s+\epsilon} + \mint\nu\cY^{s+\epsilon}\big).
    \end{equation*}
    Finally, to handle the third line of inequality \eqref{eq:composite-rates-components}, we note that the $s$-th moments of the dual solutions $f$ and $g$ are bounded by multiples of $\mint\mu\cX^{s+\epsilon} + \mint\nu\cY^{s+\epsilon}$ as well (see the second part of \Cref{lem:lp-potentials}).
    In concert with \Cref{lem:sample-mean}, this shows that the constants of all terms occurring in \eqref{eq:composite-rates-components} rely on $\kappa$, $s$, $\epsilon$, $\alpha$, as well as the $(s+\epsilon)$-th cost moments only.
\end{itemize}

\subsection{Auxiliary results}

This subsection contains technical auxiliary statements needed in the proof of \Cref{thm:composition-rates}.
We show that the dual solutions have finite $p$-th moments if the costs do (\Cref{lem:lp-potentials}), that the sample mean of a random variable with finite $p$-th moment for $p\in(1, 2]$ converges with rate $n^{-(p-1)/p}$ (\Cref{lem:sample-mean}), and that the expectation of certain binomial expressions can be bounded suitably (\Cref{lem:binomial-sum-convergence} and \ref{lem:inverse-binomial-moment}).

\begin{lemma}{integrable dual solutions}{lp-potentials}
  Let $\X$ and $\Y$ be Polish, $c\colon\X\times\Y \to \RRplus$ lower semi-continuous, $\mu\in\P(\X)$, and $\nu\in\P(\Y)$.
  If $(\mu\otimes\nu) c^p < \infty$ for some $p \ge 1$, there exist functions $f\in L^p(\mu)$ and $g\in L^p(\nu)$ such that $f\oplus g \le c$ and $\T_c(\mu, \nu) = \mint\mu f + \mint\nu g$.
  \\[0.5em]
  If furthermore $c \le \cXY$ for measurable functions $\cX\colon \X \to \RRplus$ and $\cX\colon \Y \to \RRplus$, we may assume that
  \begin{equation*}
    \mint\mu |f|^p + \mint\nu |g|^p
    \le
    8^{p+1}\,\big(\mint\mu\cX^p + \mint\nu\cY^p\big).
  \end{equation*}
\end{lemma}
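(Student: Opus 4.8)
The plan is to take optimal dual potentials in $L^1$, normalise them by an additive constant, and then bootstrap their integrability to $L^p$ using complementary slackness against an optimal plan. I would begin with the reductions. In the first part, $\mint{(\mu\otimes\nu)} c^p<\infty$ forces $\mint{(\mu\otimes\nu)} c<\infty$ by Jensen's inequality on the probability space $\mu\otimes\nu$; in the second part one may assume $\mint\mu\cX^p+\mint\nu\cY^p<\infty$, as otherwise the asserted bound is vacuous, and then $\mint{(\mu\otimes\nu)} c^p\le\mint{(\mu\otimes\nu)}(\cX\oplus\cY)^p<\infty$ as well. Hence $T_c(\mu,\nu)\le\mint{(\mu\otimes\nu)} c<\infty$ in both cases. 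By \textcite[Theorem~5.10]{villani2008optimal} there are $f\in L^1(\mu)$ and $g\in L^1(\nu)$ with $f\oplus g\le c$ and $T_c(\mu,\nu)=\mint\mu f+\mint\nu g$, and since $c$ is lower semicontinuous an optimal plan $\pi\in\CC(\mu,\nu)$ exists. Replacing $(f,g)$ by $(f+\mint\nu g,\, g-\mint\nu g)$ preserves membership in $\Phi_c(\mu,\nu)$ and optimality, so I may and do assume $\mint\nu g=0$, whence $\mint\mu f=T_c\ge 0$.

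Next I would extract three ingredients. Integrating the pointwise inequality $f(x)\le c(x,y)-g(y)$ against $\nu$ and using $\mint\nu g=0$ gives $f(x)\le\phi(x)\coloneqq\int c(x,y)\dif\nu(y)$ for $\mu$-a.e.\ $x$; symmetrically $g(y)\le\psi(y)\coloneqq\int c(x,y)\dif\mu(x)$ for $\nu$-a.e.\ $y$, using $\mint\mu f\ge 0$. In particular $f_+\le\phi$ and $g_+\le\psi$, and Jensen's inequality yields $\mint\mu f_+^p\le\mint\mu\phi^p\le\mint{(\mu\otimes\nu)} c^p$ and, likewise, $\mint\nu g_+^p\le\mint{(\mu\otimes\nu)} c^p$. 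For the negative parts, optimality forces $c=f\oplus g$ $\pi$-almost everywhere, because $\int c\dif\pi=T_c=\int(f\oplus g)\dif\pi$ while $c-f\oplus g\ge 0$. Disintegrating $\pi=\int(\delta_x\otimes\pi_x)\dif\mu(x)$, for $\mu$-a.e.\ $x$ and $\pi_x$-a.e.\ $y$ one has $f_-(x)\le g_+(y)$: either $f(x)\ge 0$ and the left side vanishes, or $-f(x)=g(y)-c(x,y)\le g_+(y)$ because $c\ge 0$. Raising to the $p$-th power and integrating against $\pi_x$ gives $f_-(x)^p\le\int g_+(y)^p\dif\pi_x(y)$, and integrating against $\mu$, together with the fact that $\pi$ has second marginal $\nu$, yields $\mint\mu f_-^p\le\mint\nu g_+^p$. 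The mirror-image argument gives $\mint\nu g_-^p\le\mint\mu f_+^p$.

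Adding the four estimates, $\mint\mu|f|^p+\mint\nu|g|^p\le 4\,\mint{(\mu\otimes\nu)} c^p<\infty$, which establishes the first assertion (in particular $f\in L^p(\mu)$ and $g\in L^p(\nu)$). Under the additional domination $c\le\cX\oplus\cY$ one has $\phi(x)\le\cX(x)+\mint\nu\cY$, so $\mint\mu f_+^p\le 2^{p-1}\big(\mint\mu\cX^p+(\mint\nu\cY)^p\big)\le 2^{p-1}\big(\mint\mu\cX^p+\mint\nu\cY^p\big)$ by a further application of Jensen, and symmetrically for $\mint\nu g_+^p$; feeding this into the two transfer inequalities gives $\mint\mu|f|^p+\mint\nu|g|^p\le 2^{p+1}\big(\mint\mu\cX^p+\mint\nu\cY^p\big)\le 4\cdot 8^p\big(\mint\mu\cX^p+\mint\nu\cY^p\big)$, as claimed.

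The one genuinely non-routine step is the transfer inequality $\mint\mu f_-^p\le\mint\nu g_+^p$ (and its mirror image): in this generality a Kantorovich potential need not admit any useful pointwise lower bound — for example when $c$ is far below $\cX\oplus\cY$ on large sets — so the control of $f_-$ must be routed through $g_+$ via an optimal coupling, exploiting that the conditional laws $\pi_x$ integrate up to $\nu$. The remaining steps — the reductions, the normalisation making $\mint\nu g=0$, and the Jensen/Fubini bookkeeping — are standard, as is the measurability of the disintegration $(\pi_x)_{x}$ and of $g_+$ in the Polish setting.
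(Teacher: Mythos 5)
Your argument is correct, and it reaches the conclusion along a route that is structurally different from the paper's, though it uses the same two raw ingredients: the feasibility constraint $f\oplus g\le c$ (for upper bounds on the potentials) and complementary slackness along an optimal plan $\pi$ together with $c\ge 0$ (for lower bounds). Where the paper anchors at a single point, bounding $|f(x)|^p$ on the full-$\pi$-mass equality set by a four-term expression involving $c(\cdot,y_0)$, $c(x_0,\cdot)$, $f(x_0)$, $g(y_0)$, and then for the second assertion selecting $(x_0,y_0)\in\supp\pi$ with small $\cX^p\oplus\cY^p$ to make the constant explicit, you instead normalize $\mint\nu g=0$, bound $f_+$ and $g_+$ pointwise by \emph{averaging} the constraint over the opposite marginal (so $f_+\le\phi=\nu c(\cdot,\,\cdot)$, etc.), and route the negative parts through the transfer inequality $\mint\mu f_-^p\le\mint\nu g_+^p$ obtained from $f_-(x)\le g_+(y)$ $\pi$-a.e. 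Splitting $|f|^p=f_+^p+f_-^p$ and averaging instead of anchoring is precisely what lets you treat both assertions with the same $(f,g)$ and without a carefully chosen reference point; the constants you obtain fall out directly from $(\mu\otimes\nu)c^p$ and $\mint\mu\cX^p+\mint\nu\cY^p$ (you even get the sharper constant $2^{p+1}$ in place of $4\cdot 8^p$, which of course implies the stated bound). The one spot worth making slightly more explicit in a polished write-up is that $\phi(x)=\int c(x,y)\dif\nu(y)$ may be $+\infty$ on a $\mu$-null set, but the inequality $f\le\phi$ then holds trivially, and Jensen gives $\mint\mu\phi^p\le(\mu\otimes\nu)c^p<\infty$, so all the quantities being compared are finite where they need to be.
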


\begin{proof}
  According to Theorem~5.10.ii together with Remark~5.14 in \textcite{villani2008optimal}, there exist dual solutions $f\in L^1(\mu)$ and $g\in L^1(\nu)$.
  To show that they are in $L^p$, fix $(x_0, y_0)\in\XY$ such that $f(x_0)$, $g(y_0)$, $\nu c(x_0, \cdot)^p$, and $\mu c(\cdot, y_0)^p$ are all finite.
  This is possible since the points $(x_0, y_0)$ with these properties must have full $\mu\otimes\nu$-mass due to integrability of $f$, $g$, and $c^p$.
  Then, observe that $f(x) \le c(x, y_0) - g(y_0)$ as well as $g(y) \le c(x_0, y) - f(x_0)$ for all $(x,y)\in\XY$ due to the duality constraint $f\oplus g \le c$.
  Hence, for any $(x,y)\in\XY$ with $f(x) + g(y) = c(x,y)$, it holds that
  \begin{equation*}
    f(x)
    \le
    c(x, y_0) - g(y_0)
    \qquad\text{and}\qquad
    f(x) =
    c(x, y) - g(y)
    \ge
    - g(y)
    \ge
    -c(x_0, y) + f(x_0),
  \end{equation*}
  which in turn implies
  \begin{align}
    |f(x)|^p
    &\le
    \big(c(x_0, y) + c(x, y_0) + |f(x_0)| + |g(y_0)|\big)^p \nonumber\\
    &\le
    4^p \big(c(x_0, y)^p + c(x, y_0)^p + |f(x_0)|^p + |g(y_0)|^p\big).
    \label{eq:dual-p-bound}
  \end{align}
  Since the set $S = \big\{(x,y)\in\XY\,|\,f(x) + g(y) = c(x,y)\big\}$ has full $\pi$-mass, where $\pi\in\CC(\mu, \nu)$ is an optimal transport plan, integrating this inequality over $\pi$ shows $\mint\mu |f|^p < \infty$.
  The bound $\mint\nu |g|^p < \infty$ follows analogously.

  For the second claim, let $M = \smash{\mint\mu\cX^p + \mint\nu\cY^p}
  = \mint\pi\smash{\big(\cX^p\oplus\cY^p\big)}$.
  If $M = \infty$, the inequality is trivially true, so we can assume $M < \infty$. 
  Then, there is $(x_0, y_0) \in \supp\,\pi$ with $\cX^p(x_0) + \cY^p(y_0) \le M$ and $f(x_0) + g(y_0) = c(x_0, y_0)$.
  In particular, $f(x_0)$ and $g(y_0)$ are finite.
  Due to the shift-invariance of dual solutions, we may assume $f(x_0) = g(y_0) = c(x_0, y_0) / 2$, which implies
  \begin{equation*}
    |f(x_0)|^p + |g(y_0)|^p
    \le
    2 \left(\frac{\cX(x_0) + \cY(y_0)}{2}\right)^p
    \le
    \cX(x_0)^p + \cY(y_0)^p.
  \end{equation*}
  From inequality \eqref{eq:dual-p-bound} we deduce for any $(x, y)\in S$ that
  \begin{align*}
    |f(x)|^p
    &\le
    4^p \big(c(x_0, y)^p + c(x, y_0)^p + |f(x_0)|^p + |g(y_0)|^p\big) \\
    &\le
    2\cdot 8^p \big(\cX(x_0)^p + \cY(y_0)^p + \cY(y)^p + \cX(x)^p\big).
  \end{align*}
  Integration over $\pi$ shows $\mint\mu |f|^p \le 4\cdot 8^p M$.
  Since the roles of $f$ and $g$ in \eqref{eq:dual-p-bound} can be reversed, the claim is established.
\end{proof}

\begin{remark}{integrability for $p < 1$}{}
  The argumentation in the proof of \Cref{lem:lp-potentials} also works for $0 < p < 1$ under the additional assumption $T_c(\mu, \nu) < \infty$.
  Then, generalized dual solutions $f\in L^p(\mu)$ and $g\in L^p(\nu)$ exist, for which equality in $f \oplus g \le c$ holds $\pi$-almost surely for every optimal $\pi\in\C(\mu, \nu)$ (see \cite[Theorem~5.10.ii]{villani2008optimal}), but where equality in $T_c(\mu, \nu) = \mu f + \nu g$ may fail to hold due to lacking integrability.
\end{remark}

\vspace{-1em}

\begin{remark}{moment conditions}{sharp-moments}
The $L^p$-integrability of dual solutions in \Cref{lem:lp-potentials} crucially relies on the condition $(\mu\otimes \nu)c^p<\infty$.
In fact, even if only one of the measures $\mu$ or $\nu$ does not have finite moments with respect to its marginal costs, e.g., $\mint\mu\cX^p < \infty$ and $\mint\nu\cY^p = \infty$, then \emph{all} dual solutions $f$ and $g$ may already fail to be $L^p$-integrable.
An explicit example that documents this behavior on the real line is provided in \Cref{app:non-integrable-potentials}.
Consequently, we infer that $L^p$-integrability of dual solutions generally requires sufficient concentration of \emph{both} underlying measures, which asserts the conditions of \Cref{lem:lp-potentials} to be essentially sharp.    
\end{remark}

\begin{lemma}{sample mean convergence rates}{sample-mean}
  Let $X, X_1, \ldots, X_n$ for $n\in\NN$ be i.i.d.\ real valued random variables and let $1 < p \le 2$.
  Then
  \begin{equation*}
    \Exp\left|\frac{1}{n}\sum_{i=1}^n X_i - \Exp\,X\right|
    \le
    \left(2\,\Exp\,\big|X - \Exp\,X\big|^p\right)^{1/p}\,
    n^{-\frab{p-1}{p}}.
  \end{equation*}
\end{lemma}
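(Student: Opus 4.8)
The plan is to reduce, by centering, to an $L^p$-moment bound for a sum of i.i.d.\ mean-zero random variables, and then to invoke the von Bahr--Esseen inequality. Set $Y_i = X_i - \Exp X$, so that the $Y_i$ are i.i.d., centered, with $\Exp|Y_i|^p = \Exp|X - \Exp X|^p$; we may assume this quantity is finite, as otherwise the bound is vacuous. Since $u \mapsto u^{1/p}$ is concave on $[0,\infty)$ for $p \ge 1$, Jensen's inequality gives
\begin{equation*}
  \Exp\Bigl|\frac{1}{n}\sum_{i=1}^n X_i - \Exp X\Bigr|
  =
  \Exp\Bigl|\frac{1}{n}\sum_{i=1}^n Y_i\Bigr|
  \le
  \Bigl(\Exp\Bigl|\frac{1}{n}\sum_{i=1}^n Y_i\Bigr|^{p}\Bigr)^{1/p}
  =
  \frac{1}{n}\,\Bigl(\Exp\Bigl|\sum_{i=1}^n Y_i\Bigr|^{p}\Bigr)^{1/p}.
\end{equation*}
Hence it suffices to establish $\Exp\bigl|\sum_{i=1}^n Y_i\bigr|^{p} \le 2n\,\Exp|Y_1|^p$; taking $1/p$-th powers and absorbing the factor $n^{-1}n^{1/p} = n^{-(p-1)/p}$ then yields the assertion.

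The inequality $\Exp\bigl|\sum_{i=1}^n Y_i\bigr|^{p} \le 2\sum_{i=1}^n \Exp|Y_i|^p$ is precisely the von Bahr--Esseen bound, valid for $1 \le p \le 2$, and I would include a short self-contained proof by induction on $n$. The only nontrivial ingredient is the elementary scalar estimate
\begin{equation*}
  |a + b|^{p} \le |a|^{p} + p\,|a|^{p-1}\operatorname{sgn}(a)\,b + 2|b|^{p}
  \qquad\text{for all } a, b\in\RR \text{ and } 1 \le p \le 2,
\end{equation*}
which, after division by $|a|^p$, reduces to the one-variable inequality $|1+t|^{p} \le 1 + pt + 2|t|^{p}$ on $\RR$ --- provable by noting that the difference vanishes at $t=0$ and has the correct sign of derivative on either side of it. Applying this pointwise with $a = S_{n-1} \coloneqq Y_1 + \dots + Y_{n-1}$ and $b = Y_n$ and taking expectations (all finite by Minkowski's inequality), the linear term contributes $p\,\Exp\bigl[|S_{n-1}|^{p-1}\operatorname{sgn}(S_{n-1})\bigr]\,\Exp[Y_n] = 0$ by independence and $\Exp Y_n = 0$, leaving $\Exp|S_n|^{p} \le \Exp|S_{n-1}|^{p} + 2\,\Exp|Y_n|^{p}$; the induction then closes with the trivial base case $n=1$.

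The crux, and the step most in need of care, is the scalar inequality underpinning von Bahr--Esseen: the constant $2$ on the $|b|^{p}$ term is what the method forces (it must absorb the error for $b$ of either sign and of arbitrary magnitude), while the linear term $p\,|a|^{p-1}\operatorname{sgn}(a)\,b$ must be retained verbatim so that its expectation cancels against the mean-zero hypothesis. Granting that inequality, everything else is routine.
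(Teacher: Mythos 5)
Your proof is correct and follows essentially the same route as the paper: center, apply Jensen's inequality to pass from the first to the $p$-th absolute moment of the sum, and then invoke the von Bahr--Esseen inequality $\Exp|\sum Y_i|^p \le 2\sum\Exp|Y_i|^p$ for $1 \le p \le 2$. The only difference is that the paper simply cites Theorem~2 of \textcite{bahr1965inequalities}, whereas you offer a self-contained inductive proof via the scalar inequality $|1+t|^p \le 1 + pt + 2|t|^p$ — which is indeed valid (its derivative has the right sign on each side of $t=0$, as a short argument via subadditivity of $u\mapsto u^{p-1}$ confirms), though at $t=0$ the derivative of the difference vanishes for $p>1$, so the monotonicity argument is slightly more delicate than your parenthetical suggests.
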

\begin{proof}
  We can assume $\Exp\,\big|X - \Exp\,X\big|^p < \infty$, otherwise the claim is trivial.
  Let $S_n = \sum_{i=1}^n \big(X_i - \Exp\,X\big)$.
  We first employ Jensen's inequality and then Theorem~2 from \textcite{bahr1965inequalities} to derive
  \begin{equation*}
    \frac{1}{n}\,\Exp\,|S_n|
    \le
    \frac{1}{n}\big(\Exp\,|S_n|^p\big)^{1/p}
    \le
    \frac{1}{n} \left(2\sum_{i=1}^n \Exp\,\big|X_i - \Exp\,X\big|^p\right)^{1/p}
    \le
    \left(2\,\Exp\,\big|X - \Exp\,X\big|^p\right)^{1/p}\,
    n^{-\frab{p-1}{p}}. \qedhere
  \end{equation*}
\end{proof}

\begin{lemma}{multinomial convergence}{binomial-sum-convergence}
  Consider $N = (N_l)_{l\in\NN} \sim \Mult\big(n, (a_l)_{l\in\NN}\big)$ for $a\in\P(\NN)$ and $n\in\NN$.
  Let $b_l, c_l > 0$ and assume that $b_l$ is decreasing in $l$ and satisfies $a_l \le b_l$ for all $l\in\NN$.
  Further, let $\gamma \in (0, 1/2]$.
  If there exist $\rho \ge 1$ and $l_n\in\NN$ such that $\frac{1}{\rho n} \le b_{l_n} \le \frac{\rho}{n}$ for all $n\in\NN$, then
  \begin{equation*}
    \sum_{l\in\NN} c_l\, \Exp\left| \frac{N_l}{n} - a_l\right|
    \le
    3\sqrt{\rho} \left(\sum_{l\in\NN} c_l \, \smash{b_l^{1-\gamma}}\right)
    n^{-\gamma}.
  \end{equation*}
\end{lemma}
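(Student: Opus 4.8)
The plan is to handle each coordinate $N_l$ separately, exploiting that it is marginally $\Bin(n, a_l)$, and to bound its mean absolute deviation from $a_l$ by two complementary estimates whose ranges of usefulness meet precisely at the index $l_n$.

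First I would record the two per-coordinate bounds. By Jensen's inequality and the binomial variance formula, $\Exp\big|\tfrac{N_l}{n} - a_l\big| \le \big(\Var(N_l)/n^2\big)^{1/2} = \big(a_l(1-a_l)/n\big)^{1/2} \le (a_l/n)^{1/2}$. On the other hand, trivially $\Exp\big|\tfrac{N_l}{n} - a_l\big| \le \Exp[N_l]/n + a_l = 2a_l$. Since $a_l \le b_l$, these give $\Exp\big|\tfrac{N_l}{n} - a_l\big| \le (b_l/n)^{1/2}$ and $\Exp\big|\tfrac{N_l}{n} - a_l\big| \le 2 b_l$.

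Next, I would split the series $\sum_{l\in\NN} c_l\, \Exp\big|\tfrac{N_l}{n} - a_l\big|$ at $l_n$, using the first bound on $\{l \le l_n\}$ and the second on $\{l > l_n\}$. For $l \le l_n$, monotonicity of $(b_l)_{l\in\NN}$ gives $b_l \ge b_{l_n} \ge 1/(\rho n)$, hence $(b_l n)^{\gamma - 1/2} \le \rho^{1/2 - \gamma} \le \sqrt{\rho}$ (using $\rho \ge 1$ and $0 \le 1/2 - \gamma \le 1/2$), so that $(b_l/n)^{1/2} = b_l^{1-\gamma}\, n^{-\gamma}\, (b_l n)^{\gamma - 1/2} \le \sqrt{\rho}\, b_l^{1-\gamma}\, n^{-\gamma}$. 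For $l > l_n$, monotonicity gives $b_l \le b_{l_n} \le \rho/n$, hence $(b_l n)^{\gamma} \le \rho^{\gamma} \le \sqrt{\rho}$ (using $\gamma \le 1/2$), so that $2 b_l = 2\, b_l^{1-\gamma}\, n^{-\gamma}\, (b_l n)^{\gamma} \le 2\sqrt{\rho}\, b_l^{1-\gamma}\, n^{-\gamma}$. Multiplying by $c_l$, summing over the two index ranges, and bounding each partial sum by the full sum $\sum_{l\in\NN} c_l b_l^{1-\gamma}$ then yields
\begin{equation*}
  \sum_{l\in\NN} c_l\, \Exp\left|\frac{N_l}{n} - a_l\right|
  \le
  2\sqrt{\rho}\, n^{-\gamma} \sum_{l\in\NN} c_l\, b_l^{1-\gamma}
  \le
  3\sqrt{\rho}\, n^{-\gamma} \sum_{l\in\NN} c_l\, b_l^{1-\gamma},
\end{equation*}
which is the claim (in fact with the slightly better constant $2\sqrt{\rho}$).

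I do not anticipate a genuine obstacle here. The only points requiring care are the exponent bookkeeping — checking that both $\gamma$ and $1/2 - \gamma$ lie in $[0, 1/2]$, so that the stray factors $(b_l n)^{\pm}$ are absorbed into $\sqrt{\rho}$ — and the conceptual observation that $l_n$ is exactly the threshold at which the variance estimate $(b_l/n)^{1/2}$ stops dominating the crude estimate $2b_l$; choosing the split point anywhere else would not close the argument cleanly. (Degenerate coordinates with $a_l = 0$ are harmless, since both per-coordinate bounds and the summand $c_l b_l^{1-\gamma}$ are then nonnegative.)
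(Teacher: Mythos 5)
Your proof is correct and follows essentially the same route as the paper: per-coordinate variance bound $\sqrt{b_l/n}$ on $\{l\le l_n\}$, per-coordinate crude bound $2b_l$ on $\{l>l_n\}$, with the factors $(b_l n)^{\pm}$ absorbed into $\sqrt\rho$ via the bracketing of $b_{l_n}$; the only cosmetic difference is that you combine the two partial sums into $\sum_{l\in\NN} c_l b_l^{1-\gamma}$ once rather than twice, which is why you see the marginally sharper constant $2\sqrt\rho$ where the paper records $3\sqrt\rho$. (Your phrase \enquote{bounding each partial sum by the full sum} actually yields $3\sqrt\rho$; to reach $2\sqrt\rho$ as your display claims, bound each summand by $2\sqrt\rho\, c_l b_l^{1-\gamma} n^{-\gamma}$ before summing --- but either way the stated inequality follows.)
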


\begin{proof}
  First, we observe $\Exp\,|N_l / n - a_l| \le \big(\Exp\,|N_l / n - a_l|^2\big)^{1/2} \le \sqrt{b_l/n}$, where we used that the variance of $N_l\sim\Bin(n, a_l)$ is $na_l (1-a_l)$.
  Next, realizing that $\smash{b_l^{\gamma - 1/2}}$ is non-decreasing due to $\gamma \le 1/2$, we derive
  \begin{equation*}
    \sum_{l=1}^{l_n} c_l\,\Exp\left|\frac{N_l}{n} - a_l\right|
    \le
    n^{-1/2} \sum_{l=1}^{l_n} c_l b_l^{1-\gamma} b_l^{\gamma - 1/2}
    \le
    n^{-1/2} b_{l_n}^{\gamma - 1/2} \sum_{l=1}^{l_n} c_l b_l^{1-\gamma}
    \le
    \sqrt\rho \left(\sum_{l=1}^{\infty} c_l b_l^{1-\gamma}\right) n^{-\gamma}.
  \end{equation*}
  For the second part of the sum, we use that $b_l$ monotonically decays and bound
  \begin{equation*}
    \sum_{l=l_n}^\infty c_l\,\Exp\left|\frac{N_l}{n} - a_l\right|
    \le
    \sum_{l=l_n}^\infty c_l \left(\Exp\left[\frac{N_l}{n}\right] + a_l\right)
    \le 
    2b_{l_n}^{\gamma}\sum_{l=l_n}^\infty c_l b_l^{1-\gamma}
    \le
    2\sqrt\rho \left(\sum_{l=1}^\infty c_l b_l^{1-\gamma}\right)
    n^{-\gamma}. \qedhere
  \end{equation*}
\end{proof}

\begin{lemma}{inverse truncated binomial moments}{inverse-binomial-moment}
  Let $N \sim \Bin(n, p)$ for $n\in\NN$ and $p\in(0, 1)$.
  Let $\varphi\colon(0,1]\to[0, \infty]$ be increasing and concave with $\varphi(k^{-1}) \le a\,\varphi\big((k+1)^{-1}\big)$ for some $a > 0$ and all $k\in\NN$.
  Then
  \begin{equation*}
      \Exp\big[1(N > 0)\varphi\big(N^{-1}\big)\big]
      \le
      a\,\varphi\big((np)^{-1}\big).
  \end{equation*}
  In particular, for $\alpha\in (0, 1)$,
  \begin{equation*}
    \Exp\big[1(N > 0) N^{-\alpha}\big] \le 2(np)^{-\alpha}.
  \end{equation*}
\end{lemma}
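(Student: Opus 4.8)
The plan is to use the growth condition $\varphi(k^{-1}) \le a\,\varphi\big((k+1)^{-1}\big)$ to trade $N^{-1}$ for the shifted quantity $(N+1)^{-1}$, which always lies in $(0,1]$ and whose reciprocal has a clean closed-form expectation under a binomial law, and then to push $\varphi$ through this expectation by Jensen's inequality using concavity. First I would observe that on the event $\{N>0\}$ the variable $N$ is a positive integer, so the hypothesis gives $\varphi(N^{-1}) \le a\,\varphi\big((N+1)^{-1}\big)$ pointwise there; since $\varphi$ is nonnegative, dropping the indicator only enlarges the expectation, whence
\begin{equation*}
  \Exp\big[1(N>0)\,\varphi(N^{-1})\big]
  \le
  a\,\Exp\big[1(N>0)\,\varphi\big((N+1)^{-1}\big)\big]
  \le
  a\,\Exp\big[\varphi\big((N+1)^{-1}\big)\big].
\end{equation*}
Because $(N+1)^{-1}\in(0,1]$ almost surely, concavity of $\varphi$ and Jensen's inequality then yield $\Exp\big[\varphi\big((N+1)^{-1}\big)\big] \le \varphi\big(\Exp[(N+1)^{-1}]\big)$.

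It remains to control $\Exp[(N+1)^{-1}]$, which I would evaluate via the classical index-shift identity $\frac{1}{k+1}\binom{n}{k} = \frac{1}{n+1}\binom{n+1}{k+1}$:
\begin{align*}
  \Exp\left[\frac{1}{N+1}\right]
  &=
  \sum_{k=0}^n \frac{1}{k+1}\binom{n}{k}p^k(1-p)^{n-k}
  =
  \frac{1}{(n+1)p}\sum_{j=1}^{n+1} \binom{n+1}{j}p^{j}(1-p)^{n+1-j} \\
  &=
  \frac{1-(1-p)^{n+1}}{(n+1)p}
  \le
  \frac{1}{(n+1)p}
  \le
  \frac{1}{np}.
\end{align*}
Since $\varphi$ is increasing, this gives $\varphi\big(\Exp[(N+1)^{-1}]\big) \le \varphi\big((np)^{-1}\big)$, and chaining the two displays establishes $\Exp[1(N>0)\varphi(N^{-1})] \le a\,\varphi\big((np)^{-1}\big)$. (If $(np)^{-1}>1$ the right-hand side is understood as $+\infty$ and there is nothing to prove; in every application we only invoke the power case below, where $\varphi$ extends to all of $(0,\infty)$.)

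For the displayed ``in particular'' claim I would simply specialize to $\varphi(x)=x^\alpha$ with $\alpha\in(0,1)$: this map is increasing and concave on $(0,1]$, and for each $k\in\NN$ one has $\varphi(k^{-1})/\varphi\big((k+1)^{-1}\big) = (1+1/k)^\alpha \le 2^\alpha \le 2$, so the constant $a=2$ is admissible, giving $\Exp\big[1(N>0)N^{-\alpha}\big] \le 2(np)^{-\alpha}$.

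The main obstacle here is essentially bookkeeping rather than depth: one must (i) recall the binomial identity for $\Exp[(N+1)^{-1}]$ and (ii) make sure Jensen is applied only after shifting to $N+1\ge 1$, so that the argument of $\varphi$ stays in the domain $(0,1]$; the growth condition is precisely what makes this shift harmless.
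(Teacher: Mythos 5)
Your proposal is correct and follows essentially the same route as the paper: trade $N^{-1}$ for $(N+1)^{-1}$ via the growth hypothesis, drop the indicator, apply Jensen's inequality for the concave $\varphi$, and invoke $\Exp\big[(N+1)^{-1}\big] = \frac{1-(1-p)^{n+1}}{(n+1)p} \le (np)^{-1}$. The only cosmetic differences are that you derive the closed-form expectation from the index-shift identity whereas the paper cites a reference for it, and that you explicitly flag the boundary case $(np)^{-1} > 1$, which the paper leaves implicit.
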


\begin{proof}
  Employing the Jensen inequality, we observe
  \begin{align*}
    \Exp\big[1(N > 0)\varphi\big(N^{-1}\big)\big]
    &\le
    a\,\Exp\big[1(N > 0)\varphi\big((N+1)^{-1}\big)\big] \\
    &\le
    a\,\Exp\big[\varphi\big((N+1)^{-1}\big)\big] \\
    &\le
    a\,\varphi\big(\Exp\big[(N+1)^{-1}\big]\big).
  \end{align*}
  The expectation of $(N+1)^{-1}$ is known and equals \parencite[equation~3.4]{chao1972negative}
  \begin{equation*}
    \Exp\left[(N+1)^{-1}\right]
    =
    \frac{1 - (1-p)^{n+1}}{(n+1)p}
    \le
    \frac{1}{np}.
  \end{equation*}
  Since $\varphi$ is increasing, the main claim of the lemma follows.
  For the example, set $\varphi(x) = x^\alpha$, which is increasing and concave.
  It is easy to see that $k^{-\alpha} \le 2 (k+1)^{-\alpha}$ for all $k\in\NN$.
  Thus, we may set $a = 2$ and the claim follows.
\end{proof}

\section{Discussion}
\label{sec:discussion}

Our work introduces a general methodology for establishing upper bounds on the convergence rate of the empirical optimal transport cost to its population counterpart in various unbounded settings.
The approach is based on a generic decomposition strategy, which allows us to extend convergence results from compact to unbounded settings under suitable moment assumptions.
These moment assumptions are essentially sharp for various settings.
Our results apply to the one- and two-sample case. 

Despite the generally sharp character of our theory, certain refinements might be targeted in further investigations.
In the one-sample case, for example, it remains unclear under which conditions \emph{both} measures must admit finite $(s+\epsilon)$-th moments in order for the rate in \Cref{thm:unbounded-rates} to remain valid.
On the one hand, recent work by \textcite{del2022central} asserts looser moment requirements on $\nu$ when sampling from a discrete measure $\mu$ in the semi-discrete setting.
In a similar vein, the dual approach pursued in Appendix~\ref{app:dual_decomposition}, which is restricted to Lipschitz cost functions, also allows for weaker moment conditions on $\nu$ if only $\mu$ is sampled from.
On the other hand, \Cref{rem:sharp-moments} and the example in \Cref{app:non-integrable-potentials} show that imposing $p$-moment conditions on one of the measures alone is not sufficient to expect $p$-integrable dual solutions under rapidly growing cost functions.
Since the integrability of dual solutions is crucial for the proof of \Cref{thm:unbounded-rates}, this suggests that moment conditions on both $\mu$ and $\nu$ might indeed become necessary when treating general settings.
Another issue concerns the assumption of \emph{independence} between the samples in the two-sample setting.
While our main proof strategy would only allow for artificial forms of weak dependency, the alternative convergence result \Cref{thm:dual-unbounded-rates} in \Cref{app:dual_decomposition} does not require any dependency assumptions.
Again, however, this result requires Lipschitz costs, and the jury on whether assumptions that limit the degree of dependency are necessary in more general settings is still out.

\section*{Acknowledgements}
 T. Staudt gratefully acknowledges support of the DFG under Germany's Excellence Strategy - EXC 2067/1-
390729940. S. Hundrieser gratefully acknowledges support from the DFG RTG 2088. The authors thank Gilles Mordant and  Johan Segers for helpful discussions in the context of lower bounds. 

\printbibliography

\clearpage

\begin{appendix}

\section{Dual composition approach}
\label{app:dual_decomposition}

This appendix documents an alternative approach to convergence rates of the empirical optimal transport cost, based on the dual formulation \eqref{eq:dual} of optimal transport.
While the results allow for weaker dependency assumptions on the samples, it is essentially restricted to equicontinuous cost functions.
We first provide a general compositional convergence statement for supremum-type functionals in measurable spaces (\Cref{thm:dual-composition-rates}) and then use it for the analysis of the empirical optimal transport~cost (\Cref{thm:dual-unbounded-rates}).

Let $\X$ be a measurable space and $\mu,\mu'\in\P(\X)$ be two probability measures.
We consider a class $\F$ of real-valued measurable functions on $\X$ and study the associated functional
\begin{equation}\label{eq:sup-functional}
    \F(\mu, \mu')
    =
    \sup_{f\in\F}|\mint{(\mu-\mu')}f|,
\end{equation}
which is well-defined if $\mint\mu |f| + \mu' |f| < \infty$ for any $f\in \F$.
This operator is also called an \emph{integral probability metric} in the literature (see, e.g., \cite{muller1997integral}).
For technical reasons, we furthermore define the function class $\F_{x_0} = \{f - f(x_0)\,|\,f\in\F\}$ that is \emph{pinned} at $x_0\in\X$.
Since the objective in \eqref{eq:sup-functional} is shift-invariant, it holds that $\F(\mu, \mu') = \F_{x_0}(\mu, \mu')$ for valid arguments $\mu,\mu'\in\P(X)$.
Finally, for a subset $\U\subset\X$, we write
\begin{equation*}
  \|\F\|_\U
  =
  \sup_{f\in\F}\,\sup_{x\in\U}\, |f(x)|
\end{equation*}
to denote the largest uniform norm in $\F$ restricted to $\U$.
Note that pinning may decrease the norm of $\F$ (without affecting the functional), which is an effect that we exploit later.
The following result plays a similar role as \Cref{thm:composition-rates}, but is tailored to functionals as in~\eqref{eq:sup-functional}.

\begin{theorem}{composition rates, supremum-type functional}{dual-composition-rates}
  Let $\X$ be a measurable space, $\mu\in\P(\X)$, and $\F$ a class of real-valued measurable functions on $\X$.
  Assume $\mu = \sum_{l\in\NN} a_l \mu_l$, where $a\in\P(\NN)$ and $\mu_l\in\P(\X)$ for all $l\in\NN$.
  Further, assume there are $c_l > 0$ with $\|\F\|_{\supp\,\mu_l} \le c_l$, and $b_l \ge a_l$ such that $(b_l)_{l\in\NN}$ is monotonically decreasing and admits the existence of $l_n\in\NN$ with $b_{l_n} \asymp 1/n$.
  If there are $\alpha, \beta, \gamma \in (0, 1/2]$ and $r_l > 0$ with\footnotemark
  \begin{align*}
    &\Exp\,\F(\mu_l, \hat\mu_{l,n})
    \le
    r_l\,n^{-\alpha}
  \intertext{
  for all $l,n\in\NN$, $\sum_{l\in\NN} r_l b_l^{1-\beta} < \infty$, and $\sum_{l\in\NN} c_l b_l^{1-\gamma} < \infty$, then, for all $n\in\NN$,
}
    &\Exp\,\F(\mu, \hat\mu_n)
    \lesssim
    n^{-\min(\alpha, \beta, \gamma)}.
  \end{align*}
\end{theorem}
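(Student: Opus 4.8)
The argument is a streamlined version of the one for \Cref{thm:composition-rates}: since the functional $\F(\mu,\mu') = \sup_{f\in\F}|(\mu-\mu')f|$ is non‑negative and sub‑additive under decomposition of its arguments, no separate dual lower bound is needed, and the whole estimate reduces to a triangle inequality plus the multinomial bookkeeping. First I would pass to the two‑stage sampling scheme already used in \Cref{thm:composition-rates}: draw $N = (N_l)_{l\in\NN}\sim\Mult(n,a)$ and then, conditionally on $N$, sample $N_l$ i.i.d.\ points from $\mu_l$ for each $l$ with $N_l>0$, forming the empirical measure $\hat\mu_{n,l}$. This realizes $\hat\mu_n = \sum_{l\in\NN}\tfrac{N_l}{n}\hat\mu_{n,l}$, and, crucially, conditionally on $N$ the measure $\hat\mu_{n,l}$ is an empirical measure of $\mu_l$ with sample size $N_l$, independent across $l$. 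Note also that $\mu_l|f|\le c_l$ and $\hat\mu_{n,l}|f|\le c_l$ for every $f\in\F$, since both are supported in $\supp\mu_l$; together with $\sum_l a_l c_l \le \sum_l b_l c_l < \infty$ (using that $b_l\downarrow 0$, so $c_l b_l \le c_l b_l^{1-\gamma}$ for $l$ large) this makes all the functionals below well‑defined.

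The core step is a pointwise‑in‑$f$ decomposition. For fixed $f\in\F$ and each $l$, write $a_l\mu_l f - \tfrac{N_l}{n}\hat\mu_{n,l}f = a_l(\mu_l-\hat\mu_{n,l})f + \big(a_l - \tfrac{N_l}{n}\big)\hat\mu_{n,l}f$ when $N_l>0$, and bound the left side by $a_l c_l = \big|a_l - \tfrac{N_l}{n}\big|c_l$ when $N_l = 0$; using $|\hat\mu_{n,l}f|\le c_l$ in either case gives, uniformly in $f$,
\begin{equation*}
  \Big|a_l\mu_l f - \tfrac{N_l}{n}\hat\mu_{n,l}f\Big|
  \le
  1(N_l>0)\,a_l\,\big|(\mu_l-\hat\mu_{n,l})f\big| + \Big|a_l - \tfrac{N_l}{n}\Big|\,c_l .
\end{equation*}
Summing over $l$, taking the supremum over $f\in\F$, and using that a supremum of a sum is bounded by the sum of suprema, I obtain the key bound
\begin{equation*}
  \F(\mu,\hat\mu_n)
  \le
  \sum_{l\in\NN} 1(N_l>0)\,a_l\,\F(\mu_l,\hat\mu_{n,l})
  + \sum_{l\in\NN} c_l\,\Big|\tfrac{N_l}{n} - a_l\Big| .
\end{equation*}

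It then remains to take expectations of the two sums exactly as in \Cref{thm:composition-rates}. For the second sum, \Cref{lem:binomial-sum-convergence} (applicable since $b_l$ decreases, $a_l\le b_l$, $\sum_l c_l b_l^{1-\gamma}<\infty$, and $b_{l_n}\asymp 1/n$) yields $\sum_l c_l\,\Exp|N_l/n - a_l| \lesssim n^{-\gamma}$. For the first sum, conditioning on $N$ and invoking the hypothesis $\Exp[\F(\mu_l,\hat\mu_{n,l})\mid N_l]\le r_l N_l^{-\alpha}$ on $\{N_l>0\}$, I would apply \Cref{lem:inverse-binomial-moment} with $\varphi(x)=x^\alpha$ when $\alpha\le\beta$, and with $\varphi(x)=x^\beta$ (using $N_l^{-\alpha}\le N_l^{-\beta}$) when $\beta<\alpha$, to get $\Exp\big[1(N_l>0)N_l^{-\min(\alpha,\beta)}\big]\le 2(na_l)^{-\min(\alpha,\beta)}$; summing against $a_l r_l$ and bounding $a_l^{1-\min(\alpha,\beta)}\le a_l^{1-\beta}\le b_l^{1-\beta}$ gives $\lesssim n^{-\min(\alpha,\beta)}$ by $\sum_l r_l b_l^{1-\beta}<\infty$. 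Combining the two contributions yields $\Exp\,\F(\mu,\hat\mu_n)\lesssim n^{-\min(\alpha,\beta)} + n^{-\gamma} = n^{-\min(\alpha,\beta,\gamma)}$.

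I do not anticipate a serious obstacle: the only points requiring care are the bookkeeping in the $N_l=0$ case of the pointwise decomposition and the justification that, conditionally on the multinomial allocation, the $\hat\mu_{n,l}$ are genuine empirical measures of $\mu_l$ — both standard. The shift‑invariance $\F(\mu,\mu')=\F_{x_0}(\mu,\mu')$ plays no role in this abstract statement; it is what will let us replace $\F$ by a suitably pinned class with bounded restrictions to $\supp\mu_l$ when applying the theorem in \Cref{thm:dual-unbounded-rates}.
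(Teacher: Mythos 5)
Your proposal is correct and follows essentially the same route as the paper: the identical two-stage multinomial sampling realization, the same pointwise-in-$f$ decomposition via $ab-cd = a(b-d)+(a-c)d$ (with your explicit handling of the $N_l=0$ case being what the paper's bound implicitly absorbs into the $c_l\,|a_l - N_l/n|$ term), and the same two lemmas (\Cref{lem:binomial-sum-convergence} and \Cref{lem:inverse-binomial-moment}) to finish. No substantive differences.
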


\footnotetext{Note that we do not necessarily require that $\F(\mu_l, \hat\mu_{l,n})$ and $\F(\mu, \hat\mu_n)$ are measurable, since the theorem remains valid if outer expectations are employed \parencite[Section~1.2]{Vaart1996}.}

\begin{proof}
Like in the proof of \Cref{thm:composition-rates}, we can assume that the samples $(X_i)_{i=1}^n$ are generated in a two-step procedure:
first, frequencies $N = (N_l)_{l\in\NN} \sim \Mult(n, a)$ are drawn, before $N_l$ observations $X_{l, 1}, \ldots, X_{l, N_l}$ are i.i.d.-sampled from $\mu_l$ for each $l\in\NN$.
We let $\hat\mu_{l,N_l} = \frac{1}{N_l} \sum_{i=1}^{N_l} \delta_{X_{l,i}}$ whenever $N_l > 0$.
Fix some $f\in\F$.
Noting that $\mint{\hat\mu_{l, N_l}}f \le c_l$, we find
\begin{align*}
    |\mint{(\mu - \hat\mu_n)}f|
    &=
    \left| \sum_{l\in\NN} a_l\mint{\mu_l} f - \frac{N_l}{n} \mint{\hat\mu_{l,N_l}}f\right| \\
    &\le
    \sum_{l\in\NN}a_l\, 1(N_l > 0)\, \big|\mint{\mu_l} f - \mint{\hat\mu_{l,N_l}}f\big|
    + \sum_{l\in\NN} c_l \left|a_l - \frac{N_l}{n}\right|,
\end{align*}
where we have used that $ab - cd = a(b - d) + (a-c)d$.
Taking the supremum over $f\in\F$ and the expectation over the observations yields
\begin{align*}
  \Exp\,\F(\mu, \hat\mu_n)
  &\le
  \sum_{l\in\NN}a_l\,\Exp\left[1(N_l > 0)\, \F(\mu_l, \hat\mu_{l, N_l})\right]
  + \frac{1}{n}\sum_{l\in\NN} c_l\,\Exp\,|N_l - na_l| \\
  &\le
  \sum_{l\in\NN}r_la_l\,\Exp\big[1(N_l > 0)\, N_l^{-\alpha}\big]
  + \frac{1}{n}\sum_{l\in\NN} c_l\,\Exp\,|N_l - na_l|,
\end{align*}
where we have conditioned on $N_l$ in the expectation and exploited that $\Exp\,\F(\mu_l, \hat\mu_{l,k}) \le r_l\, k^{-\alpha}$ for all $1 \le k \le n$ by assumption.
From here on, we can proceed like in the proof of \Cref{thm:composition-rates} after equation \eqref{eq:composite-rates-components} to establish the claim.
\end{proof}

The previous statement provides a way to  generalize convergence rates from bounded to unbounded settings by making use of the dual formulation \eqref{eq:dual} of optimal transport.
To this end, we have to formulate certain restrictions on the set of dual variables, joined by a bounded convergence assumption similar to $\BC(\kappa, \alpha)$.

\begin{assumption*}{$\DBC(\kappa, \alpha)$}{}
  Let $c$ be of the form \eqref{eq:marginal-cost-bound}, and let $\F$ and $\G$ be classes of real-valued Borel functions on $\X$ and $\Y$.
  Assume there exists $\Phi_c \subset \F\times\G$ that satisfies
  \begin{equation}\label{eq:dual-dbc-duality}
    \T_c(\mu, \nu)
    =
    \sup_{(f,g)\in\Phi_c} \mint\mu f + \mint\nu g
  \end{equation}
  for all $\mu\in\P(\X)$ and $\nu\in\P(\Y)$ with $\mint\mu \cX + \mint\nu \cY < \infty$.
  Further, assume there are $\kappa > 0$, $\alpha\in(0, 1/2]$, and pinning points $x_0\in\X$ and $y_0\in\Y$ such that the conditions
  \begin{subequations}
  \begin{alignat}{3}
    \Exp\,\F(\mu, \hat\mu_n)
    &\le
    \kappa\,r\,n^{-\alpha}
    \qquad&&\text{and}\qquad
    \Exp\,\G(\nu, \hat\nu_m)
    &&\le
    \kappa\,r\,m^{-\alpha},
    \label{eq:dual-bounded-convergence}%
    \\
    \|\F_{x_0}\|_{B_{\X}(r)}
    &\le
    \kappa\,r
    \qquad&&\text{and}\qquad
    \|\G_{y_0}\|_{B_{\Y}(r)}
    &&\le
    \kappa\,r
    \label{eq:dual-bound-pinned}%
  \end{alignat}
  \label{eq:dual-dbc-regularity}%
  \end{subequations}%
  hold for all $r \ge 1$, $\mu\in\P\big(B_\X(r)\big)$, $\nu\in\P\big(B_\Y(r)\big)$, and $n,m\in\NN$.
\end{assumption*}

This assumption imposes a considerable burden on the cost function, especially regarding the existence of suitable classes $\F$ and $\G$ of dual variables.
Generally, the function class $\Phi_c$ in \eqref{eq:dual-dbc-duality} can be restricted to pairs of $c$-concave functions \parencite[Theorem 5.10]{villani2008optimal}
\begin{equation*}
  \Phi_{c}
  =
  \big\{ (g^c,g^{cc}) \, |\,  g \colon \YC \to \RR\cup\{-\infty\}, g^{c} \not \equiv -\infty, g^{cc} \not \equiv -\infty \big\},
\end{equation*}
where $g^c(\cdot) = \inf_{y\in \YC} c(\cdot,y) - g(y)$ and $g^{cc}(\cdot) = \inf_{x\in \XC} c(x,\cdot) - g^c(x)$ denote the single and double $c$-transforms of $g\colon\Y \to \RR\cup\{-\infty\}$.
Unless additional regularity assumptions are placed upon the costs, however, elements of $\Phi_c$ may fail to be bounded on compact domains (see \Cref{app:non-integrable-potentials} for an example), preventing the validity of \eqref{eq:dual-bound-pinned}. 

An important example where Assumption~$\DBC(\kappa, \alpha)$ can be shown to hold are $L$-Lipschitz continuous costs on $\RR^d\times\RR^d$ for some $L > 0$.
Assuming $c(0,0) = 0$ for simplicity, the dominating marginal costs can be chosen as $\cX(x) = L\norm{x}$ and $\cY(x) = L\norm{y}$, while the classes $\F$ and $\G$ can be selected as the sets of all $L$-Lipschitz functions on $\RR^d$.
Based on \textcite[Theorem 3.3 and Lemma A.2]{hundrieser2022empirical}, condition \eqref{eq:dual-bounded-convergence} is then met for a rate $\alpha$ equal to $1/(2\vee d)$ (with an additional logarithmic term in case of $d = 2$).
By $L$-Lipschitz continuity of $\F$ and $\G$, condition \eqref{eq:dual-bound-pinned} is also fulfilled. 
Faster rates can, for example, be admitted if the cost function additionally fulfills Assumption~$\SMOOTH(1)$ from \Cref{subsec:SmoothCosts} in both components. Then, a similar rescaling argument as in the proof of \Cref{cor:smooth}, together with \textcite[Theorem 3.8 and Lemma A.3]{hundrieser2022empirical}, shows that condition \eqref{eq:dual-bounded-convergence} is satisfied for $\alpha = 2/(4 \vee d)$ (with an additional logarithmic term for $d = 4$). 

\begin{theorem}{unbounded rates, dual}{dual-unbounded-rates}
  Let $\X$ and $\Y$ be Polish, $\mu\in\P(\X)$, $\nu\in\P(\Y)$, and $c\colon\XY\to\RRplus$ lower semi-continuous such that $\DBC(\kappa, \alpha)$ is satisfied.
  Assume there are $1 < s \le 2$ and $\epsilon > 0$ such that the moments $\mint\mu \cX^{s+\epsilon}$ and $\mint\nu \cY^{s+\epsilon}$ are finite.
  Then, for all $n,m\in\NN$,
  \begin{equation*}%
    \Exp\,\big|T_c(\hat\mu_{n}, \hat\nu_{m}) - T_c(\mu, \nu)\big|
    \lesssim
    (n\wedge m)^{-\alpha} + (n\wedge m)^{-\frab{s-1}{s}}.
  \end{equation*}
\end{theorem}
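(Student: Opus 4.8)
The plan is to mirror the proof of \Cref{thm:unbounded-rates}, but to replace the primal composition machinery (\Cref{lem:composition-bound} and \Cref{thm:composition-rates}) by the supremum-type composition result \Cref{thm:dual-composition-rates}, applied \emph{separately} to the empirical fluctuations on $\X$ and on $\Y$. The conceptual gain is that the two samples decouple completely: there is no product-coupling remainder to control, which is precisely why the independence of $(X_i)_{i=1}^n$ and $(Y_j)_{j=1}^m$ is not needed here.

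First I would reduce the two-sided deviation to integral probability metrics via duality. Since $\mint\mu\cX+\mint\nu\cY<\infty$ by the moment hypothesis, and trivially $\mint{\hat\mu_n}\cX+\mint{\hat\nu_m}\cY<\infty$, the representation \eqref{eq:dual-dbc-duality} is available both for $(\mu,\nu)$ and for $(\hat\mu_n,\hat\nu_m)$. Picking a $\delta$-optimal pair $(f,g)\in\Phi_c$ for each of the two sides, the constraint $f\oplus g\le c\le\cX\oplus\cY$ inherent to $c$-concave pairs sandwiches $f$ and $g$ between $\cX$- resp.\ $\cY$-integrable functions (the argument of \Cref{lem:lp-potentials} with $p=1$), so one may restrict to $\delta$-optimal pairs whose potentials are finite $\mu$- resp.\ $\nu$-almost surely and integrable against all four measures. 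Using $f\in\F$ and $g\in\G$, subtracting the two dual values then gives, almost surely and for every $\delta>0$,
\begin{equation*}
  \big|T_c(\hat\mu_n,\hat\nu_m)-T_c(\mu,\nu)\big|
  \le
  \F(\mu,\hat\mu_n)+\G(\nu,\hat\nu_m)+\delta,
\end{equation*}
so after taking expectations and letting $\delta\downarrow 0$ it suffices to bound $\Exp\,\F(\mu,\hat\mu_n)$ and $\Exp\,\G(\nu,\hat\nu_m)$ individually.

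Second I would feed each of these into \Cref{thm:dual-composition-rates}. After replacing $\cX$ by $\cX\vee 1$ and $\cY$ by $\cY\vee 1$ — which leaves the balls $B_\X(r),B_\Y(r)$ unchanged for all $r\ge 1$, hence preserves Assumption~$\DBC(\kappa,\alpha)$, and keeps the $(s+\epsilon)$-th moments finite — I set $A_l=\{x\in\X:2^l\le\cX(x)<2^{l+1}\}$, $a_l=\mu(A_l)$, $\mu_l=\mu|_{A_l}/a_l$, so that $\supp\mu_l\subset B_\X(2^{l+1})$. I would apply \Cref{thm:dual-composition-rates} to the \emph{pinned} class $\F_{x_0}$ (legitimate, since $\F(\cdot,\cdot)=\F_{x_0}(\cdot,\cdot)$): condition \eqref{eq:dual-bound-pinned} yields $\|\F_{x_0}\|_{\supp\mu_l}\le \|\F_{x_0}\|_{B_\X(2^{l+1})}\le c_l:=\kappa\,2^{l+1}$, and condition \eqref{eq:dual-bounded-convergence} yields $\Exp\,\F(\mu_l,\hat\mu_{l,n})\le r_l\,n^{-\alpha}$ with $r_l:=\kappa\,2^{l+1}$. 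Exactly as in the proof of \Cref{thm:unbounded-rates}, the estimate $\sum_l 2^{l(s+\epsilon)}a_l\lesssim\mint\mu\cX^{s+\epsilon}<\infty$ gives $a_l\le b_l:=K\,2^{-l(s+\epsilon)}$ with $b_l$ decreasing and $b_{l_n}\asymp 1/n$ for $l_n=\lceil\log_2(Kn)/(s+\epsilon)\rceil$, and the choice $\beta=\gamma=(s-1)/s\in(0,1/2]$ makes both $\sum_l r_l b_l^{1-\beta}<\infty$ and $\sum_l c_l b_l^{1-\gamma}<\infty$. \Cref{thm:dual-composition-rates} then delivers $\Exp\,\F(\mu,\hat\mu_n)\lesssim n^{-\min(\alpha,(s-1)/s)}$, and the symmetric construction on $\Y$ gives $\Exp\,\G(\nu,\hat\nu_m)\lesssim m^{-\min(\alpha,(s-1)/s)}$. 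Plugging this into the displayed inequality and using $\max\big((n\wedge m)^{-\alpha},(n\wedge m)^{-(s-1)/s}\big)\le(n\wedge m)^{-\alpha}+(n\wedge m)^{-(s-1)/s}$ yields the asserted bound.

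The step I expect to require the most care is the two-sided duality reduction of the first paragraph: one must verify that the $\delta$-optimal $c$-concave pairs can be taken with both potentials individually integrable against $\mu$, $\nu$, $\hat\mu_n$, $\hat\nu_m$ (this is where the structural bound $c\le\cX\oplus\cY$ and the pinning enter in an essential way), and that the pinned-norm condition \eqref{eq:dual-bound-pinned} may genuinely serve as the ``$c_l$''-hypothesis of \Cref{thm:dual-composition-rates} after passing to $\F_{x_0}$. Everything else — the level-set decomposition, the moment bookkeeping, and the summability of $\sum_l r_l b_l^{1-\beta}$ and $\sum_l c_l b_l^{1-\gamma}$ — is already carried out verbatim in the proof of \Cref{thm:unbounded-rates}.
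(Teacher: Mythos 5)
Your proposal matches the paper's own proof step by step: the two-sided duality reduction to $\F(\mu,\hat\mu_n)+\G(\nu,\hat\nu_m)$ (which the paper cites from Theorem 2.2 of \textcite{hundrieser2022empirical}, while you re-derive it directly via $\delta$-optimal dual pairs), followed by the dyadic level-set decomposition of each marginal and an application of \Cref{thm:dual-composition-rates} to the pinned class with $r_l=c_l=\kappa\,2^{l+1}$, $b_l=K\,2^{-l(s+\epsilon)}$, and $\beta=\gamma=(s-1)/s$. Incidentally, your exponent $\min(\alpha,(s-1)/s)$ is the correct one; the paper's displayed $\min(\alpha,(s-1)/2)$ in the final line of its proof is a typo.
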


We want to highlight a number of noteworthy observations about this result.
In some respects, \Cref{thm:dual-unbounded-rates} is more general than \Cref{thm:unbounded-rates}, while the latter allows for a greater flexibility in the choice of the cost function.
\begin{enumerate}[leftmargin=2em, label={\arabic*)}]
  \item The sharp separation of the observations $X = (X_i)_{i=1}^n$ and $Y = (Y_j)_{j=1}^m$ in equation \eqref{eq:dual-functional-bound} below implies that dependencies between $X$ and $Y$ can be arbitrary in \Cref{thm:dual-unbounded-rates}.
    In contrast, \Cref{thm:unbounded-rates} relies on independence between $X$ and $Y$, since joint conditioning on the number of observations $N_l$ and $M_l$, belonging to $\mu_l$ and $\nu_l$ respectively, is necessary.
  \item Equation \eqref{eq:dual-functional-bound} also has repercussions for the one-sample case, for which we find the inequality $|\T_c(\hat\mu_n, \nu) - \T_c(\mu, \nu)| \le \F(\mu, \hat\mu_n)$.
    Therefore, conditions that involve $\G$ explicitly may be dropped.
    In fact, only the requirement $\mint\nu|g| < \infty$ for all $g\in\G$ remains, and the explicit moment condition $\mint\nu\cY^{s+\epsilon}$ is not necessary anymore.
    In \Cref{thm:unbounded-rates}, on the other hand, the proof does not allow for the moment condition on $\nu$ to be dropped if only $\mu$ is sampled from.
  \item The mentioned benefits of \Cref{thm:dual-unbounded-rates} over \Cref{thm:unbounded-rates} (see points 1.\ and 2.\ above) critically rely on the fact that condition $\DBC(\kappa, \alpha)$ guarantees the existence of separate classes $\F$ and $\G$ of dual variables that are (a) well-behaved and (b) sufficiently rich for the dual representation of optimal transport to hold.
    This artificial separation of dual variables $f$ and $g$, which are intrinsically linked by the optimality criterion \parencite[Section~1.2]{santambrogio2015optimal}
    \begin{equation*}
      g(y) = \inf_{x\in\X} c(x, y) - f(x),
    \end{equation*}
    can only be realized in restricted settings, e.g., when the cost function is equicontinuous in its components.
    If this is not satisfied, classes $\F$ and $\G$ that are sufficiently rich for duality \eqref{eq:dual-dbc-duality} will typically fail to meet condition \eqref{eq:dual-dbc-regularity}.
\end{enumerate}

\begin{proof}[Proof of \Cref{thm:dual-unbounded-rates}]
We first note that the condition $\mint\mu\cX + \mint\nu\cY < \infty$ together with \eqref{eq:dual-bound-pinned} implies that $\mint\mu |f| + \mint\nu |g| < \infty$ for any $f\in\F$ and $g\in\G$.
We can follow the argumentation in \textcite[Theorem~2.2]{hundrieser2022empirical} to establish
\begin{equation}\label{eq:dual-functional-bound}
  |\T_c(\hat\mu_n, \hat\nu_m) - \T_c(\mu, \nu)|
  \le
  \F(\mu, \hat\mu_n) + \G(\nu, \hat\nu_m),
\end{equation}
which reduces our analysis to the objects $\F(\mu, \hat\mu_n)$ and $\G(\nu, \hat\nu_m)$, or, equivalently, $\F_{x_0}(\mu, \hat\mu_n)$ and $\G_{y_0}(\nu, \hat\nu_m)$.
From here on, we can essentially duplicate the proof of \Cref{thm:unbounded-rates} for both $\mu$ and $\nu$ separately, incorporating some adaptations that stem from exchanging Assumption~$\BC(\kappa, \alpha)$ with Assumption~$\DBC(\kappa, \alpha)$ and \Cref{thm:composition-rates} with \Cref{thm:dual-composition-rates}.
For illustration, this is done for $\mu$ in the following.

Analogously to the proof of \Cref{thm:unbounded-rates}, we may assume that $\cX \ge 2$.
Set $\X_l = \big\{x \in \X\,|\,2^l \le \cX(x) \le 2^{l+1}$\big\}, $a_l = \mu(\X_l)$, and $\mu_l = \mu|_{\X_l} / a_l$ (which is well-defined if $a_l>0$). 
Since the support of $\mu_l$ is contained in $B_{\X}(2^{l+1})$, condition \eqref{eq:dual-bound-pinned} implies $\|\F_{x_0}\|_{\supp\,\mu_l} \le c_l$ for the choice $c_l = \kappa\,2^{l+1}$.
Letting $r_l = \kappa\,2^{l+1}$ as well, condition \eqref{eq:dual-bounded-convergence} yields that
\begin{equation*}
  \Exp\,\F_{x_0}(\mu_l, \hat\mu_{l,n})
  \le
  r_l\,n^{-\alpha}
\end{equation*}
for all $l,n\in\NN$.
Similar to the proof of \Cref{thm:unbounded-rates}, we define $b_l = K\,2^{-l(s+\epsilon)}$ with $K = 2^{s+\epsilon} \mint\mu\cX^{s+\epsilon}$, which guarantees $a_l \le b_l$.
Letting $\beta = \gamma = (s-1) / s$, finiteness of the sums $\sum_{l\in\NN} r_l \smash{b_l^{1-\beta}}$ as well as $\sum_{l\in\NN} c_l\smash{b_l^{1-\gamma}}$ follows as in \eqref{eq:bounded-rates-constants}.
Therefore, all conditions for application of \Cref{thm:dual-composition-rates} to $\F_{x_0}$ are satisfied, and we conclude that $$\Exp\,\F(\mu, \hat\mu_n) \lesssim n^{-\min(\alpha, (s-1)/2)}.$$
Since analogous arguments are also valid for $\G(\nu, \hat\nu_m)$, the claim follows.
\end{proof}

\section{Auxiliary results and omitted proofs}
\label{app:auxiliaryProofs}

This appendix contains \Cref{lem:scaled-function}, which supports the cost scaling arguments used in \Cref{subsec:LocallyLipschitz} and \ref{subsec:SmoothCosts}, as well as the proofs for inequalities \eqref{eq:ConvergenceRateWassersteinNull} and \eqref{eq:ConvergenceRateWassersteinAlternative}.

\begin{lemma}{}{scaled-function}
  Let $f\colon \RR^d \!\to \RR$ be locally Lipschitz.
  For $p, \beta > 0$ and $r \ge 1$, set $f_{r,p,\beta} \colon \B \to \RR$,
  \begin{equation}\label{eq:scaled-function}
    f_{r,p,\beta}(u)
    =
    \frac{f\big(r^{1/p}u^{\beta/p}\big)}{r},
  \end{equation}
  where $\B$ denotes the Euclidean unit ball and where $u^{\beta/p}$ is shorthand for $\|u\|^{\beta/p - 1} u$.
  Let $B_\X(r) = \big\{x\in\RR^d\,|\,\|x\|^p \le r\big\}$. The following three statements hold.
  \begin{enumerate}[topsep=0.5em, itemsep=0.5em]
    \item If $f \le r$ on $B_\X(r)$, then $f_{r,p,\beta} \le 1$ on $\B$.
    \item If the first partial derivatives of $f$ at all $x\in B_\X(r)$ where $f$ is differentiable are bounded by $\|x\|^{p-1} + r^{1-1/p}$, then $f_{r,p,\beta}$ is $2d\beta/p$-Lipschitz on $\B$ for $\beta > \max(1, p)$.
    \item If all partial derivatives of $f$ of order $k\in\{1, 2\}$ exist at all $x\in B_\X(r)$ and are bounded by $\|x\|^{p-k} + r^{1-k/p}$, then the partial derivatives of $f_{r,p,\beta}$ exist and are bounded by $5d\beta^2/p^2$ for $\beta > \max(2, 2p)$.
   \end{enumerate}
\end{lemma}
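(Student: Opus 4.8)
The plan is to analyze everything through the single map $\phi\colon\B\to\RR^d$, $\phi(u)=r^{1/p}\norm{u}^{\beta/p-1}u$, for which $f_{r,p,\beta}=r^{-1}(f\circ\phi)$. First I would record the structural properties of $\phi$. Since $\norm{\phi(u)}^p=r\norm{u}^\beta$ and $t\mapsto t^{\beta/p}$ is a bijection of $[0,1]$, the map $\phi$ carries $\B$ onto $B_\X(r)$. Writing $\psi(u)=\norm{u}^{\beta/p-1}u$, so $\phi=r^{1/p}\psi$, the map $\psi$ is positively homogeneous of degree $a:=\beta/p$ and smooth off the origin, with $\partial_i\psi_k(u)=(a-1)\norm{u}^{a-3}u_iu_k+\norm{u}^{a-1}\delta_{ik}$ and a similar three-term formula for $\partial_i\partial_j\psi_k$. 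Estimating with $|u_i|\le\norm{u}$ and $\sum_k|u_k|\le\sqrt d\,\norm u$ gives $\sum_k|\partial_i\phi_k(u)|\le a\sqrt d\,r^{1/p}\norm{u}^{a-1}$ and $\sum_k|\partial_i\partial_j\phi_k(u)|\le a^2\sqrt d\,r^{1/p}\norm{u}^{a-2}$ (for $a\ge 2$). Because $a>1$ when $\beta>\max(1,p)$ and $a>2$ when $\beta>\max(2,2p)$, the exponents $a-1$, respectively $a-1$ and $a-2$, are nonnegative, so $\phi$ extends to a $C^1$ (respectively $C^2$) map on all of $\B$ with vanishing derivatives at $0$ and with the displayed bounds still valid there. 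Part~1 follows at once: for $u\in\B$ we have $\phi(u)\in B_\X(r)$, hence $f(\phi(u))\le r$ and $f_{r,p,\beta}(u)\le 1$.

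For part~2 I would apply the chain rule together with Rademacher's theorem. Since $f$ is locally Lipschitz it is differentiable off a null set $N\subset\RR^d$, and since $\phi$ restricts to a $C^1$ diffeomorphism of $\B\setminus\{0\}$ onto $B_\X(r)\setminus\{0\}$, the set $\phi^{-1}(N)$ is null in $\B$; at every other point $f_{r,p,\beta}$ is differentiable with $\nabla f_{r,p,\beta}(u)=r^{-1}\nabla\phi(u)^\top\nabla f(\phi(u))$. Inserting $|\partial_kf(\phi(u))|\le\norm{\phi(u)}^{p-1}+r^{1-1/p}$, using $\norm{\phi(u)}=r^{1/p}\norm{u}^{a}$, and pulling this uniform bound out of $\sum_k|\partial_i\phi_k(u)|$, every power of $r$ collapses to $r^0$ and the surviving powers of $\norm u$ are exactly $\norm{u}^{\beta-1}$ and $\norm{u}^{\beta/p-1}$, both $\le 1$ on $\B$ because $\beta>\max(1,p)$. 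This yields $|\partial_if_{r,p,\beta}(u)|\le 2\sqrt d\,\beta/p$ for each $i$ at a.e.\ $u$, hence $\norm{\nabla f_{r,p,\beta}}\le 2d\beta/p$ a.e.; since $f_{r,p,\beta}$ is Lipschitz on the convex set $\B$, this a.e.\ gradient bound upgrades to the stated Lipschitz constant.

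For part~3 the hypothesis makes $f$ twice differentiable everywhere on $B_\X(r)$, and $\phi$ is $C^2$ on $\B$, so $f_{r,p,\beta}$ is twice differentiable with $\partial_i\partial_jf_{r,p,\beta}=r^{-1}\big(\sum_{k,l}\partial_k\partial_lf(\phi)\,\partial_i\phi_k\,\partial_j\phi_l+\sum_k\partial_kf(\phi)\,\partial_i\partial_j\phi_k\big)$. I would bound the first sum by $\big(\norm{\phi(u)}^{p-2}+r^{1-2/p}\big)\big(\sum_k|\partial_i\phi_k|\big)\big(\sum_l|\partial_j\phi_l|\big)$ and the second by $\big(\norm{\phi(u)}^{p-1}+r^{1-1/p}\big)\sum_k|\partial_i\partial_j\phi_k|$, substitute the estimates on the sums from paragraph~one, and again watch the $r$-powers cancel while the $\norm u$-exponents that appear, namely $\beta-2$, $2\beta/p-2$ and $\beta/p-2$, are all nonnegative for $\beta>\max(2,2p)$ and hence give factors $\le1$ on $\B$. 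Collecting constants — one $d$ from the product of two sums over $k$, one $\sqrt d\le d$ from the single sum, and powers of $\beta/p$ from the derivatives of $\psi$ — yields $|\partial_i\partial_jf_{r,p,\beta}|\le 4d\beta^2/p^2$, while the first derivatives are already controlled by the part~2 estimate; both are $\le 5d\beta^2/p^2$.

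The calculations are otherwise routine, so I expect the main obstacles to be two bookkeeping issues. First, the dimension dependence: naive entrywise bounds on the Hessian and on $\nabla\phi$ would produce a factor $d^2$, so it is essential to factor the uniform bounds on the derivatives of $f$ out of the sums over the contracted indices and estimate $\sum_k|\partial_i\phi_k|$ and $\sum_k|\partial_i\partial_j\phi_k|$ as single quantities, which is what keeps the final constant linear in $d$. Second, the origin: $\phi$ and its first (respectively second) derivatives have a removable singularity at $u=0$ exactly under the hypotheses $\beta>\max(1,p)$ (respectively $\beta>\max(2,2p)$), and in part~2 one additionally needs the Rademacher-plus-convexity step to convert the a.e.\ gradient bound into a Lipschitz bound, since $f$ is assumed only locally Lipschitz there.
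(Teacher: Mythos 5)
Your proof is correct and follows essentially the same route as the paper's: identify $f_{r,p,\beta}=r^{-1}(f\circ\phi)$ with $\phi(u)=r^{1/p}\|u\|^{\beta/p-1}u$, compute the derivatives of $\phi$ explicitly, apply the chain rule to bound $\partial_i f_{r,p,\beta}$ and $\partial_i\partial_j f_{r,p,\beta}$, and use $\beta>\max(1,p)$ (respectively $\beta>\max(2,2p)$) to make all surviving exponents of $\|u\|$ nonnegative so the corresponding powers are at most $1$ on $\mathcal B$. The only substantive difference is in how the dimension factor is collected: you estimate $\sum_k|\partial_i\phi_k|$ and $\sum_k|\partial_i\partial_j\phi_k|$ as single quantities and pull out the uniform entrywise bound on the derivatives of $f$, whereas the paper separately bounds $\|\nabla f\|$, $\|Hf\|$, and the vector norms $\|\partial_i u^{\beta/p}\|$, $\|\partial_i\partial_j u^{\beta/p}\|$. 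Your bookkeeping is slightly tighter: it arrives at $4d\beta^2/p^2$ in part~3, while the paper's own accumulated bound is $(2d+8\sqrt d)(\beta/p)^2$, which only drops below the claimed $5d\beta^2/p^2$ when $d\ge 8$. You also make the Rademacher/null-set step in part~2 explicit (pulling the non-differentiability set back through the diffeomorphism $\phi|_{\mathcal B\setminus\{0\}}$ and then upgrading an a.e.\ gradient bound to a Lipschitz bound on the convex set $\mathcal B$), a point the paper leaves implicit.
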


\begin{proof}
  The first statement trivially follows from definition \eqref{eq:scaled-function}.
  For the second statement, first observe that $f_{r, p, \beta}$ is the composition of the functions $\frac{1}{r}f|_{B_{\X}(r)}$ and $u \mapsto r^{1/p}u^{\beta/p}$.
  Both functions are Lipschitz if $\beta > p$, implying that $f_{r, p, \beta}$ is Lipschitz as well.
  In order to bound the Lipschitz constant, we have to control the gradient.
  The $i$-th partial derivative of $f_{r, p, \beta}$ is given~by
  \begin{align*}
    \big|\partial_i f_{r, p, \beta}(u)\big|
    &=
    \big|r^{1/p - 1} \nabla f\big(r^{1/p}u^{\beta/p}\big) \cdot \partial_i u^{\beta/p}\big| \\
    &\le
    r^{1/p - 1} \big\|\nabla f\big(r^{1/p}u^{\beta/p}\big)\big\|\, \big\|\partial_i u^{\beta/p}\big\|
  \end{align*}
  whenever $f$ is differentiable at $r^{1/p} u^{\beta/p}$.
  Here, $\partial_i u^{\beta/p}$ denotes component-wise differentiation of the function $u \mapsto u^{\beta/p} = \|u\|^{\beta/p -1}u$ and $\cdot$ is the dot product.
  Exploiting the assumption of bounded partial derivatives of $f$, a brief calculation shows
  \begin{equation}\label{eq:scaled-gradient-bound}
    \big\|\nabla f\big(r^{1/p}u^{\beta/p}\big)\big\|
    \le
    \sqrt{d} \, r^{1-1/p} \left(\|u\|^{\beta - \beta/p} + 1\right),
  \end{equation}
  where the factor $\sqrt{d}$ stems from the fact that the norm of a vector in $\RR^d$ is bounded by $\sqrt{d}$ times its largest entry.
  We next observe
  \begin{align}
    \big\|\partial_i u^{\beta/p}\big\|
    &=
    \big\|\big(\partial_i \|u\|^{\beta/p - 1}\big) u + \|u\|^{\beta/p - 1} \partial_i u \big\| \nonumber\\
    &\le
    \big|\partial_i \|u\|^{\beta/p - 1}\big| \|u\| + \|u\|^{\beta/p-1} \|\partial_i u\| \nonumber\\
    &\le
    (\beta/p - 1) \|u\|^{\beta/p - 3} |u_i| \|u\| + \|u\|^{\beta/p-1} \nonumber\\
    &\le
    \frac{\beta}{p} \|u\|^{\beta/p-1},
    \label{eq:scaled-partial-first-bound}
  \end{align}
  where we have exploited $\beta/p > 1$, $\|\partial_i u\| = 1$, and $|u_i| \le \|u\|$.
  Stitching the previous inequalities together yields
  \begin{align*}
    \big|\partial_i f_{r, p, \beta}(u)\big|
    \le
    \frac{\sqrt{d} \beta}{p} \left(\|u\|^{\beta-\beta/p} + 1\right) \|u\|^{\beta/p-1} \\
    =
    \frac{\sqrt{d} \beta}{p} \left(\|u\|^{\beta-1} + \|u\|^{\beta/p-1}\right).
  \end{align*}
  Therefore, since $\|u\| \le 1$, all partial derivatives of $f_{r,p,\beta}$ are bounded by $2\sqrt{d}\beta/p$ if $\beta > \max(1, p)$.
  In particular, the gradient of $f_{r,p,\beta}$ is in norm bounded by $2d\beta/p$.
  We conclude that $f_{r,p,\beta}$ is $2d\beta/p$-Lipschitz.

  For the third statement, assume that $f$ is twice differentiable.
  If $\beta > 2p$, we note that $f_{r, p, \beta}$ is twice differentiable as well.
  The first derivatives have been calculated and bounded above.
  For the second derivatives, let $Hf(x)$ denote the Hessian matrix of $f$ at $x\in B_\X(r)$ and let $\|Hf(x)\|$ be its operator norm.
  Then
  \begin{align*}
    \big|\partial_i\partial_j f_{r, p, \beta}(u)\big|
    &=
    \left| r^{2/p -1}\,\partial_i u^{\beta/p} \cdot Hf\big(r^{1/p}u^{\beta/p}\big)\, \partial_j u^{\beta/p} + r^{1/p - 1} \nabla f\big(r^{1/p} u^{\beta/p}\big) \cdot \partial_i\partial_j u^{\beta/p} \right| \\
    &\le
    r^{2/p -1}\big\|\partial_i u^{\beta/p}\big\|\cdot \big\|\partial_j u^{\beta/p}\big\|\cdot \big\|Hf\big(r^{1/p}u^{\beta/p}\big)\big\| + r^{1/p - 1} \big\|\nabla f\big(r^{1/p} u^{\beta/p}\big)\big\| \cdot \big\|\partial_i\partial_j u^{\beta/p} \big\|.
  \end{align*}
  The only terms on the right-hand side that have not been bounded already are the Hessian of $f$ and the second derivatives of $u^{\beta/p}$.
  Since $\|Hf\|$ is bounded by $d$ times the largest entry of $Hf$, the assumption of bounded second partial derivatives yields, similarly to \eqref{eq:scaled-gradient-bound},
  \begin{equation}\label{eq:scaled-hessian-bound}
    \big\|Hf\big(r^{1/p}u^{\beta/p}\big)\big\|
    \le
    d\,r^{1 - 2/p} \left(\|u\|^{\beta - 2\beta/p} + 1\right).
  \end{equation}
  For the second partial derivatives of $u^{\beta/p}$, we note that $\partial_i\partial_j u = 0$ and recall
  the calculations in \eqref{eq:scaled-partial-first-bound}. We find
  \begin{align}
    \big\|\partial_i\partial_j u^{\beta/p}\big\|
    &=
    \left\| \left(\partial_i\partial_j\|u\|^{\beta/p - 1}\right) u + \left(\partial_i\|u\|^{\beta/p - 1}\right)\partial_j u + \left(\partial_j\|u\|^{\beta/p - 1}\right)\partial_i u + \|u\|^{\beta/p-1} \partial_i\partial_j u \right\| \nonumber \\
    &\le
    \big|\partial_i\partial_j\|u\|^{\beta/p - 1}\big| \|u\|
    +
    \big|\partial_i \|u\|^{\beta/p-1}\big|
    +
    \big|\partial_j \|u\|^{\beta/p-1}\big| \nonumber \\
    &\le
    (\beta/p - 1)\,|\beta/p - 3|\, \|u\|^{\beta/p-4} |u_i||u_j| + 1_{(i=j)} (\beta/p) \|u\|^{\beta/p-2}
    +
    2(\beta/p - 1) \|u\|^{\beta/p - 2} \nonumber \\
    &\le
    \left(\frac{2\beta}{p}\right)^2 \|u\|^{\beta/p - 2},
    \label{eq:scaled-partial-second-bound}
  \end{align}
  where we have used $\beta / p > 2$ in order to bound the constants.
  Putting inequalities \eqref{eq:scaled-gradient-bound} to \eqref{eq:scaled-partial-second-bound} together, we obtain
  \begin{equation*}
    \big|\partial_i\partial_j f_{r, p, \beta}(u)\big|
    \le
    d \left(\frac{\beta}{p}\right)^2 \left(\|u\|^{\beta-2} + \|u\|^{2\beta/p-2}\right)
    +
    \sqrt{d}\left(\frac{2\beta}{p}\right)^2 \left(\|u\|^{\beta-2} + \|u\|^{\beta/p -2}\right).
  \end{equation*}
  The condition $\beta/p > \max(2, 2p)$ together with $\|u\|\le 1$ thus implies that the second partial derivatives of $f_{r,p,\beta}$ are bounded by $5 d \beta^2/p^2$.
\end{proof}

\begin{proof}[Proof of Inequality \eqref{eq:ConvergenceRateWassersteinNull}]
Since $T_{d^p}^{1/p}$ for $p \ge 1$ defines a metric on the space of probability measures $\PC_p(\YC)$ with finite $p$-th moment (see Chapter 6 of \cite{villani2008optimal}) it follows for any $\mu, \nu, \tau \in\PC_p(\YC)$ via the triangle inequality that
\begin{equation*}
  T_{d^p}(\mu, \nu)
  =
  \left( T_{d^p}^{1/p}(\mu, \nu) \right)^p
  \leq
  \left(T_{d^p}^{1/p}(\mu, \tau) + T_{d^p}^{1/p}(\tau, \nu)\right)^p
  \leq
  2^{p} \big(T_{d^p}(\mu, \tau) + T_{d^p}(\tau, \nu)\big).
\end{equation*}
Hence, inequality \eqref{eq:ConvergenceRateWassersteinNull} immediately follows once we show, for any $r\geq 1$ and $n\in \NN$, that
\begin{equation}\label{eq:ConvergenceWassersteinNull_OneSample}
  \sup_{\mu \in \PC(B_\X(r))}\Exp\,\big| \T_{d^p}(\hat\mu_n, \mu) \big|
  \lesssim
  r \varphi_{p,t}(n).
\end{equation}
Our proof for \eqref{eq:ConvergenceWassersteinNull_OneSample} relies on quantitative stability bounds by \textcite{weed2019sharp}. 
To apply them, we consider the rescaled metric $d'\coloneqq d/\big(4 k_\XC^{1/t} r^{1/p}\big)$ with $t$ and $k_\XC$ as in $\DIM(t)$.
Then the set $B_\X(r)$ has diameter at most one with respect to $d'$, and for any $\mu, \nu\in \PC\big(B_\X(r)\big)$ we find
\begin{equation*}
  T_{d^p}(\mu, \nu)
  =
  4^p r k_\XC^{p/t}T_{d'^p}(\mu, \nu).
\end{equation*}
A straightforward computation shows that $\covering(\delta, U, d') = \covering\big(4  k_\XC^{1/t} r^{1/p}\delta, U, d\big)$ and by $\DIM(t)$ we find for $U \subset B_\X(r)$ that 
\begin{align*}
  \covering(\delta, U, d')
  =
  \covering\big(4 k_\XC^{1/t} r^{1/p} \delta, U, d\big)
  \leq
  1+k_\XC \left(\frac{\diam(U)}{4 k_\XC^{1/t} r^{1/p} \delta}\right)^t
  =
  1+\left(\frac{\diam(U)}{4 r^{1/p} \delta}\right)^t \leq 1+\delta^{-t}
\end{align*}
Using Propositions 1, 3 and 4 of \textcite{weed2019sharp}, it follows for any  $\mu \in \PC\big(B_\X(r)\big)$ and $k^*\in \NN$ (by choosing  $S\coloneqq \supp(\mu)$ for each $k\in \{1, \dots, k^*\}$) that 
\begin{align*}
  \EE\,\T_{d'^p}(\hat\mu_n, \mu)
  &\leq
  3^{-k^*p} + \sum_{k = 1}^{k^* } 3^{-(k-1)p} \sqrt{\frac{\covering\big(3^{-(k+1)}, \supp\,\mu, d'\big)}{n}} \\
  &\leq
  3^{-k^* p} + \frac{3^{-2p}}{\sqrt{n}}\sum_{k = 1}^{k^*} \left[3^{-(k+1)(p -t/2)} + 3^{-(k+1)p}\right]
\end{align*}
If $t/2<p$, let $k^*\rightarrow \infty$ and note that
\begin{align*}
  \EE\,\T_{d'^p}(\hat\mu_n, \mu)
  \leq
  0 + \frac{3^{-2p}}{\sqrt{n}}\left( \frac{3^{2(p-t/2)}}{1-3^{p-t/2}} + \frac{3^{-2p}}{1-3^{-p}}\right)
  \asymp
  n^{-1/2} \asymp\varphi_{p,t}(n).
\end{align*}
If $t/2 = p$, choose $k^*= k^*(n)=\lceil\frac{1}{2}\log_3(n)/p\rceil$, which results in 
\begin{align*}
  \EE\,\T_{d'^p}(\hat\mu_n, \mu)
  \leq
  \frac{1}{\sqrt{n}} + \frac{3^{-2p}}{\sqrt{n}}\left( \frac{\log_3(n)}{2p}+1+ \frac{3^{-2p}}{1-3^{-p}} \right)  \asymp  n^{-1/2}\log(n+1) \asymp \varphi_{p,t}(n).
\end{align*}
Finally, if $t/2 >p$, consider $k^* = k^*(n)= \lceil\frac{1}{t}\log_3(n)\rceil$, which yields
\begin{align*}
  \EE[\T_{d'^p}(\hat\mu_n, \mu)] \leq n^{-p/t} + \frac{3^{-2p}}{\sqrt{n}}\left( \frac{3^{(k^*+1)(t/2-p)} - 3^{t-2p}}{3^{t/2-p} - 1} + \frac{3^{-2p}}{1-3^{-p}} \right).
  \end{align*}
A straightforward computation then yields that $ 3^{(k^*+1)(t/2-p)} \leq 3^{t-2p} n^{1/2 - p/t}$, which asserts,
\begin{align*}
   \EE[\T_{d'^p}(\hat\mu_n, \mu)] \leq n^{-p/t} + \frac{3^{-2p}}{\sqrt{n}}\left( \frac{3^{t-2p} n^{1/2 - p/t} - 3^{t-2p}}{3^{t/2-p} - 1} + \frac{3^{-2p}}{1-3^{-p}} \right)\asymp n^{-p/t} \asymp \varphi_{p,t}(n).
\end{align*}
From this we conclude the validity of \eqref{eq:ConvergenceWassersteinNull_OneSample}, which finishes the proof of Inequality \eqref{eq:ConvergenceRateWassersteinNull}.
 \end{proof}

\begin{proof}[Proof of Inequality \eqref{eq:ConvergenceRateWassersteinAlternative}]

First note by \textcite[Theorem 5.10(iii)]{villani2008optimal} that for any $\mu\in \PC\big(B_\X(r)\big)$ and $\nu \in \PC\big(B_\Y(r)\big)$, 
\begin{equation}\label{eq:KR}
  T_{d}(\mu, \nu)
  =
  \sup_{f\in \FC(r)} \mu f + \nu f^d
\end{equation}
where $\FC(r) \coloneqq \{ f\colon B_\X(r) \rightarrow \RR \;|\; \exists\, g \colon \YC \rightarrow \RR, f = \inf_{y\in \YC} d(\cdot,y) - g(y)\}$ and $f^d \colon B_\Y(r) \rightarrow \RR$, $y\mapsto \inf_{x\in B_\X(r)} f(x) - d(x,y)$ denotes the $d$-transform of $f$.
Further, since the right-hand side of \eqref{eq:KR} is invariant  under constant shifts of $f$, we can replace $\FC(r)$ in \eqref{eq:KR} by the pinned function class $\FC_{x_0}(r) = \{ f - f(x_0) \;|\; f\in \FC(r)\}$.
As a consequence, we conclude for empirical measures $\hat \mu_n$ and $\hat \nu_m$ that 
\begin{align*}
  \EE \left|  T_{d}(\hat\mu_n, \hat\nu_m) -  T_{d}(\mu, \nu) \right|
  \leq
  \EE\left[\sup_{f\in \FC_{x_0}(r)} \Big|(\hat \mu_n - \mu)f\Big| \right]+ \EE\left[\sup_{f\in \FC_{x_0}(r)} \left|(\hat \nu_m - \nu)f^d\right|\right].
\end{align*}
To examine the expectations on the right-hand side, we use methods of empirical process theory, relying on bounds on the metric entropy of $\FC_{x_0}(r)$ and $\smash{\big(\FC_{x_0}(r)\big)^d}$.  
In this context, note that since $\FC_{x_0}(r)$ consists of functions which are the infimum over $1$-Lipschitz functions, these functions are as well.
Further, $\FC_{x_0}(r)$ admits envelope function $d(\cdot,x_0)$ and is therefore uniformly bounded by $r$.
Hence, $\FC_{x_0}(r)$ is contained in $ \Lip_{1,r}(B_\X(r))$ the class of $1$-Lipschitz functions on $B_\X(r)$ which are bounded by $r$.
Uniform metric entropy bounds for Lipschitz functions on metric space are available by \textcite[Section 9]{Kolmogorov1961} and assert for any $\delta > 0$ that
\begin{align*}
  \log\covering\big(\delta, \Lip_{1,r}(B_\X(r)), \|\cdot\|_\infty\big)
  & \lesssim 
  \begin{cases}
    \log\left(2 \lceil\frac{2r}{\delta}\rceil +1\right)\covering\left(\frac{\delta}{4}, B_\X(r), d\right) \!\!\!\!& \text{ if } B_\X(r) \text{ is disconnected,} \\
    \log\left(2 \left\lceil \frac{2r}{\delta}\right\rceil+1\right) +\covering\left(\frac{\delta}{4}, B_\X(r), d\right) \!\!\!\!& \text{ if } B_\X(r) \text{ is connected,}
  \end{cases}
\end{align*}
for a universal hidden constant.
In conjunction with Assumption $\DIM(t)$, we obtain by following along the proof of Theorem 2.1 in \textcite{hundrieser2022empirical}, in case $B_\X(r)$ is disconnected, that
\begin{align*}
  \EE\bigg[ \sup_{f \in \FC_{x_0}(r)}  (\hat \mu_n - \mu)(f)\bigg]
  \lesssim&
  \inf_{\epsilon\in (0, r]} \epsilon + \frac{1}{\sqrt{n}} \int_{\epsilon/4}^r \sqrt{\log\covering(\delta,\Lip_{1,r}(\XC), \|\cdot\|_\infty)}\dif \delta \\
  \leq&
  \inf_{\epsilon\in (0, r]} \epsilon + \frac{1}{\sqrt{n}} \int_{\epsilon/4}^r \sqrt{\log\left(2\left\lceil \frac{2r}{\delta}\right\rceil+1\right)\left( 1 + k_\XC \left(\frac{16 r}{\delta}\right)^{t}\right)} \dif \delta\\
  \;\;=&
  \;r \inf_{\epsilon'\in (0, 1]} \epsilon' + \frac{1}{\sqrt{n}} \int_{\epsilon'/4}^1  \sqrt{\log\left(2\left\lceil \frac{2}{\delta'}\right\rceil+1\right)\left( 1 + k_\XC \left(\frac{16}{\delta'}\right)^{t}\right)} \dif \delta',
\end{align*}
where we used in the last equality the substitutions $\delta' = \delta/r$ and $\epsilon' = \epsilon/r$.
Overall, this asserts
\begin{align*}
  \EE\bigg[ \sup_{f \in \FC_{x_0}(r)}  (\hat \mu_n - \mu)(f)\bigg]
  \lesssim
  r \begin{cases}
 	n^{-1/2} & \text{ if } t < 2 \text{ for }\epsilon' = 0,\\
 	n^{-1/2} \log(n)^2 & \text{ if } t = 2 \text{ for } \epsilon' = 4 n^{-1/2},\\
 	n^{-1/t} \log(n)&\text{ if } t > 2 \text{ for } \epsilon' = 4 n^{-1/t}.
 \end{cases}
\end{align*}

Likewise, if $B_\X(r)$ is connected, it follows using concavity of $x\mapsto x^{1/2}$ on $\RRplus$ that 
\begin{align*}
  \EE\bigg[ \sup_{f \in \FC_{x_0}(r)}  (\hat \mu_n - \mu)(f)\bigg]
  \lesssim&
  \;r \inf_{\epsilon'\in (0, 1]} \epsilon' + \frac{1}{\sqrt{n}} \int_{\epsilon'/4}^1 \log\left(2\left\lceil \frac{2}{\delta'}\right\rceil+1\right) + 1 + k^{1/2}_\XC \left(\frac{16}{\delta'}\right)^{t/2} \dif \delta'\\
  \lesssim&
  \;
  r \begin{cases}
    n^{-1/2} & \text{ if } t< 2 \text{ for }\epsilon' = 0,\\
    n^{-1/2} \log(n) & \text{ if } t = 2 \text{ for } \epsilon = 4 n^{-1/2},\\
    n^{-1/t}&\text{ if } t > 2 \text{ for } \epsilon = 4 n^{-1/t}.
  \end{cases}	  
\end{align*}
Moreover, since $d$-transformation is a Lipschitz operation, it does not increase the uniform metric entropy that \parencite[Lemma 2.1]{hundrieser2022empirical}, i.e, for any $\delta > 0$ it holds that
\begin{equation*}
  \log\covering\big(\delta,(\FC_{x_0}(r))^d, \|\cdot\|_\infty\big)
  \leq
  \log\covering\big(\delta,\FC_{x_0}(r), \|\cdot\|_\infty\big). 
\end{equation*}
This yields an identical upper bound for the remaining term $\EE\big[\sup_{f \in \FC_{x_0}(r)}  (\hat \nu_m - \nu)(f^d)\big]$ as for $\smash{\EE\big[ \sup_{f \in \FC_{x_0}(r)}  (\hat \mu_n - \mu)(f)\big]}$, and asserts the validity of \eqref{eq:ConvergenceRateWassersteinAlternative}.
\end{proof}

\section{Non-integrability of dual solutions}
\label{app:non-integrable-potentials}

In this appendix, we establish a complementary example to \Cref{lem:lp-potentials}, showcasing that dual solutions cannot be expected to be $L^p$-integrable if one of the marginal probability measures does not admit enough moments.
We construct the example for $\X = \Y = \RR_+$ under the cost function $c\colon \RR_+\times\RR_+\to\RR_+$, $(x,y) \mapsto (x - y)^{\gamma}$ for $\gamma\in 2\,\NN$, which is dominated by the marginal costs $\cX(x) = 2^\gamma |x|^\gamma$ and $\cY(y) = 2^\gamma |y|^\gamma$.  
For any fixed $p > 1$, we will propose measures $\mu,\nu\in\P(\RR_+)$ and values $\gamma$ such that
\begin{equation*}
  \mint(\mu\otimes\nu) (\cX \oplus \cY) < \infty,
  \quad
  \mint\mu\cX^p < \infty,
  \quad
  \mint\nu\cY^p = \infty,
  \quad
  \mint\mu |f|^p = \infty,
  \quad
  \mint\nu |g|^p = \infty,
\end{equation*}
for all dual solutions $f$ and $g$ that satisfy $\T_c(\mu, \nu) = \mint\mu f + \mint\nu g$.
This in particular establishes that $p$-moment conditions on only one measure are not generally sufficient to ensure $p$-integrability of any of the dual solutions.

Let $\alpha, \beta > 0$. We define $\mu$ and $\nu$ via their cumulative distribution functions 
\begin{equation*}
  F_\mu(t)
  =
  1-\big(1-(1 \wedge t)\big)^{\alpha}
  \quad\text{and}\quad
  F_\nu(t)
  =
  1- (1\vee t)^{-\beta}
  \quad\text{for}~
  t \geq 0.
\end{equation*}
Then, $\mu$ is compactly supported on $[0,1]$, asserting $\mint\mu \cX^q < \infty$ for all $q > 0$.
If $\beta > \gamma$, it also holds that $\mint\nu \cY < \infty$, implying $\mint{(\mu\otimes \nu)}(\cX \oplus \cY) <\infty$.
Hence, by Theorem 5.10iii of \textcite{villani2008optimal}, we conclude the existence of dual solutions $(f,g)\in L^1(\mu)\times L^1(\nu)$ such that $f\oplus g \leq c$ as well as $T_c(\mu, \nu) = \mint\mu f + \mint\nu g < \infty$.
Furthermore, by Theorem 2.10 of \textcite{bobkov2019one}, the optimal transport coupling $\pi$ between $\mu$ and $\nu$ is determined via the transport map
\begin{equation*}
  T\colon (0,1) \mapsto (1,\infty), \quad t \mapsto F_\nu^{-1} \circ F_\mu(t) = (1-t)^{-\alpha/\beta}.
\end{equation*}
Invoking Theorem 10.28 of \textcite{villani2008optimal}, which is applicable by Example 10.35 in that reference, the optimal transport map $T$ and dual solution $f$ are linked $\mu$-almost surely via the relation 
\begin{equation*}
  \textstyle f'(t)
  =
  -\nabla_x c\big(t,T(t)\big)
  =
  -\gamma \left(t-(1-t)^{-\alpha/\beta}\right)^{\gamma-1}.
\end{equation*}
It thus follows that $f$ is uniquely determined on $[0, 1]$ up to a constant shift. Since $f'(t)$ essentially grows like $(1-t)^{-(\gamma-1)\alpha/\beta}$ for $t$ close to $1$, elementary computations show that there is an $\epsilon>0$ such that
\begin{equation*}
  |f(t)| \asymp(1-t)^{-(\gamma-1)\alpha/\beta + 1}
  \quad\text{for all}~
  t\in (1-\epsilon,1),
\end{equation*}
provided $(\gamma-1)\alpha \neq \beta$.
By definition of $\mu$, it follows that
\begin{equation*}
  \mint\mu|f|^p
  \gtrsim
  \int_{1-\epsilon}^1 (1-t)^{-p(\gamma-1)\alpha/\beta+p} (1-t)^{\alpha-1} \dif t, 
\end{equation*}
the right-hand side of which is equal to $\infty$ if and only if
\begin{equation*}
  p + \alpha \leq p(\gamma-1)\frac{\alpha}{\beta}.
\end{equation*}
This condition, together with $\beta > \gamma\in 2\,\NN$ and $(\gamma-1)\alpha > \beta$, is for example satisfied for
\begin{equation*}
  \alpha = \beta/2
  \quad\text{and}\quad
  \beta = \gamma + 1
  \quad\text{if}\quad
  \gamma \ge \frac{3p+1}{p-1}.
\end{equation*}
In fact, this choice of parameters also satisfies $p\gamma \ge \beta$, which implies
\begin{equation*}
  \mint\nu \cY^p
  =
  \int_1^\infty 2^{\gamma} t^{p\gamma}\,\beta t^{-\beta-1}\dif t
  =
  2^{\gamma} \beta \int_1^\infty t^{p\gamma - \beta -1}\dif t
  = \infty.
\end{equation*}
Since $c(x,y) = |x-y|^\gamma \geq |y|^\gamma$ for any $x\in [0,1]$ and $y\in [1,\infty)$, we can additionally conclude $\mint{(\mu \otimes \nu )}c^p \gtrsim \mint\nu \cY^p = \infty$. In order to show that $\mint\nu|g|^p = \infty$, we observe, similar as for $f$,
\begin{equation*}
  g'(s)
  =
  -\nabla_x c\big(s, T^{-1}(s)\big)
  =
  -\gamma \big(s - T^{-1}(s)\big)^{\gamma-1}.
\end{equation*}
Since $T^{-1}(s)\in (0,1)$ for any $s\in (1,\infty)$, the derivative $g'$ behaves like $s^{\gamma-1}$ for large $s$.
Thus, we find $|g(s)| \asymp s^\gamma$ and $\mint\nu |g|^p = \infty$ as well.

\end{appendix}

\end{document}